\numberwithin{equation}{section}
\newcommand{\scal}[2]{\langle #1,#2\rangle} 
\newcommand{\ess}{\mathrm{ess}}
\newcommand{\dd}{\mathrm{d}}
\newcommand{\NN}{\mathbb{N}}
\newcommand{\RR}{\mathbb{R}}
\newcommand{\CC}{\mathbb{C}}
\newtheorem{theorem}{Theorem}[section]
\newtheorem{lemma}[theorem]{Lemma}
\theoremstyle{remark}
\newtheorem*{remark}{Remark}
\newtheorem*{example}{Example}
\theoremstyle{definition}
\newtheorem*{defi}{Definition}
\let\div\undefined
\renewcommand{\Im}{\mathrm{Im}}
\renewcommand{\Re}{\mathrm{Re}}
\DeclareMathOperator{\supp}{supp}
\DeclareMathOperator{\ran}{ran}
\DeclareMathOperator{\div}{div}
\DeclareMathOperator{\tr}{tr} 
\begin{document}

\title[Spectral asymptotics for eigenvalues and resonances]{Spectral asymptotics for eigenvalues and resonances in the presence  of a change of boundary conditions}
\author{Andr\'e Froehly} 
\address{Andr\'e Froehly, Institut  f\"ur Analysis, Leibniz Universit\"at Hannover, Wel\-fengarten 1,  D-30167 Hannover}
\email{andre.froehly@math.uni-hannover.de}

\begin{abstract}
We consider a second-order elliptic differential operator  on a domain with a cylindrical end. We impose Dirichlet boundary conditions on the boundary with the exception of a small set, where we impose Neumann boundary conditions. Shrinking this set to a point we calculate the asymptotic behaviour of the resonances. 
\end{abstract}

\maketitle
\section{Introduction}
Let $\Omega$ be a domain with a cylindrical end and $(\Sigma_\ell)_\ell$ a family of subsets of the boundary, which shrink smoothly to a point for $\ell \to 0$.  We consider a self-adjoint elliptic operator $A(x,\nabla_x)$ with Dirichlet boundary conditions  $\partial \Omega \backslash \Sigma_\ell$ and Neumann boundary conditions on $\Sigma_\ell$. We calculate the asymptotic behaviour of eigenvalues and resonances as $\ell\to 0$. The problem goes back to the study of two quantum waveguides which are laterally coupled through a small window, cf.\ \cite{ExVu}. Assuming that both waveguides have the same width one considers instead the Dirichlet Laplacian on one of the waveguides having a small Neumann window. In this case there exists a  discrete eigenvalue below the essential spectrum, which becomes unique and tends to the threshold of the essential spectrum as the window size decreases. The convergence is of order $\ell^4$, where $2\ell$ denotes the window size. The first term of the asymptotic formula was  given in \cite{Popov99} by an asymptotic matching  for the eigenfunctions. Since then the subject has been extended in many directions, e.g. higher dimensional cylinders \cite{Popov201, Gad04}, a finite or an infinite  number of windows \cite{Popov201,Popov101,BorisovBunoiuCardone10, BorisovBunoiuCardone11,BorisovBunoiuCardone13, Nazarov13}, resonances \cite{FrolovPopov00, FrolovPopov03, Borisov1, Borisov2} or problems in elasticity theory \cite{HaenelWeidl2} to give only a small selection. For further references for problems concerning quantum waveguides we refer e.g.\ to  \cite{ExnerKov15}. A similar question in the case of bounded domains is treated in \cite{Gad92}, cf.\ also the monograph \cite{AmKaLee} and the references therein for related problems. Since we want to consider resonances
we note that there is also a vast amount of literature considering scattering theory in waveguides. In this context we also refer to  \cite{Edward} and the references therein as well as \cite{DuclosExner, DaviesArs}, where an asymptotic formula for resonances was shown. However, they are mainly concerned with the Laplacian, for  which the structure of the essential spectrum is well known. Closely related to the structure of the essential spectrum is the notion of ingoing and outgoing waves, which depends on the dispersion curves of the operator. In the case of the Laplacian the  horizontal and transversal direction decouple and the dispersion curves may easily be calculated. For more general  problems less seems to be known, we refer e.g.\ to the monograph \cite{NazarovPlam} for the notion of ingoing and outgoing problems for general boundary value problems. We note that the structure of these curves  play also an important  role in numerics, cf.\ \cite{Bonnet} for a study of  elastic waveguides. 

The aim of the article is to generalise previous results for the Laplacian in waveguides to general second-order elliptic operators and to resonances.  To this end we want to study the analytic continuation of the resolvent in detail and we prove a  limiting absorption principle. Then the proof of the asymptotic formula for the resonances is based on a treatment of boundary integral operators as in  \cite{AmKaLee}, cf.\ also   \cite{HaenelWeidl1,HaenelWeidl2} for the application of the method to waveguides. More precisely we use the Dirichlet-to-Neumann operator of the problem  and  apply an operator-valued version of Rouch\'e's theorem. Finally, we want to mention that  only the principal symbol of the operator and the shape of the boundary and of the window will have an influence on  the first term of the asymptotic formula. 

\section{Statement of the problem and results}
Let  $d \ge 2$ and  consider $\Omega \subseteq \RR^{d}$ such that $\Omega \cap (\RR_{\le 0} \times \RR^{d-1})$ is bounded  and such that 
$$ \Omega \cap ( \RR_{\ge 0}  \times \RR^{d-1}) = \RR_{\ge 0} \times G , $$
where $G \subseteq \RR^{d-1} $ denotes the  cross-section of the cylindrical end. We assume that $\Omega$ has locally Lipschitz boundary and $\partial G$ is of regularity $C^{1,1}$. Later additional regularity conditions near the window will arise. We denote the coordinates by $x = (y ,z) \in \Omega \subseteq \RR \times \RR^{d-1}$ and consider a scalar  second-order differential operator
\begin{align}
	A(x, \nabla_x) u  &:= - \div ( \mathbf{a}(x) \nabla_x u ) + a(x) u , 
\end{align}
where $\mathbf{a}  = \mathbf{a}^* : \overline{\Omega} \to \RR^{d \times d}$ and $a : \overline{\Omega} \to [0,\infty)$ are Lipschitz continuous. 
We note that most  of the assertions will also  hold true for matrix-valued differential operators. We assume that $A(x,\nabla_x)$ is uniformly  strongly elliptic, i.e., there exists a constant $c >0$ such that 
$$ \scal{\mathbf a(x) \eta}{\eta}_{\CC^d}  \ge c \| \eta \|_{\CC^d}^2 , \qquad x \in \overline{\Omega} , \eta \in \CC^d . $$
We suppose that on the cylindrical end the coefficients depend only in the transversal variable, i.e, for $y \ge R$ we have 
$$ \mathbf{a} (y, z) = \mathbf a^0(z) , \qquad a (y , z)  = a^0(z)   
$$
for some $C^{1,1}$-functions $\mathbf{a}^0 : \overline{G} \to \RR^{d \times d}$ and $a^0  : \overline{G} \to [0,\infty)$. 
In what follows we denote by $\gamma_0 u =  u|_{\partial \Omega}$ the boundary trace of a function $u$ and by $\mathbf{n} = (n_1, \ldots , n_d)^T : \partial \Omega \to \RR^d$ the outward unit normal vector  at some point of the boundary. For a smooth function $u \in C^\infty(\overline{\Omega})$ we define its conormal derivative by 
\begin{align}
	\gamma_1 u   = \mathbf n \cdot \gamma_0 (\mathbf{a} \nabla u )
\end{align}
Let $\Sigma \subseteq \partial \Omega$ be a bounded,  open subset of $\partial \Omega \cap (\RR_{\le 0} \times \RR^{d-1})$ and assume that $\partial \Omega$ and the coefficients of $A(x,\nabla_x)$ are smooth near $\Sigma$. 
We suppose that $\Sigma$ itself has Lipschitz boundary and consider the  operator $A(x,\nabla_x)$ in $\Omega$ with  Dirichlet boundary conditions on $\partial \Omega \backslash \overline{\Sigma}$ and  Neumann boundary conditions on $\Sigma$. It is defined by its 
sesquilinear form 
\begin{equation}\label{def:sesq_form}
	 \mathfrak a[u,v] :=  \int_{\Omega} \scal{\mathbf{a} (x) \nabla u(x)}{\nabla u(x)}_{\CC^d} \; \dd x 
	 + \int_\Omega a(x)  u(x) \overline{v(x)}  \; \dd x , 
\end{equation}
which has the domain
$$  D[\mathfrak a] := \{ u \in H^1(\Omega) : \gamma_0 u = 0 \text{ on } \partial \Omega \backslash \overline{\Sigma} \} . $$ 
We denote the associated self-adjoint operator by $A_\Sigma$ and by  $A_\varnothing$ the operator with Dirichlet boundary conditions on all of $\partial \Omega$ corresponding to $\Sigma = \varnothing$. In what follows we assume that $\overline{\Sigma}$ is  contained in the domain of a smooth chart $(U, \phi)$ of $\partial \Omega$ and that $\phi (U)$ is star-shaped with centre $0$. We define $\Sigma_\ell$ through
$$  \phi(\Sigma_\ell)  := \ell \cdot \phi(\Sigma) , $$
which means that we shrink the window to the point $s_0 := \kappa^{-1} (0) \in \partial \Omega$. We want to investigate the behaviour of resonances and eigenvalues as $\ell$ goes to zero. We call $\lambda \in \CC$ a resonance of $A_\Sigma$ if there exists an outgoing solution $u$ of the boundary value problem
$$  (A(x,\nabla_x) - \lambda^2) u = 0 \text{ in } \Omega, \qquad 
	\gamma_0 u = 0 \text{ on } \partial \Omega \backslash \overline{\Sigma} , \qquad 
	\gamma_1 u = 0 \text{ on } \Sigma .
$$
Note that an outgoing solution belongs to some exponentially weighted $L_2$-space and satisfy a given asymptotic behaviour on the cylindrical end, cf.\  \eqref{def:outgoing_solutions} for the precise definition. 

\paragraph{\bf 1st Result (the non-threshold case):}  Here and subsequently we denote by $B(\lambda_0, \varepsilon)$ the ball in the complex plane with centre $\lambda_0$ and  radius $\varepsilon$. Moreover, we assume that $\lambda_0$ is a resonance of $A_\varnothing$ which is  contained in a sufficiently small neighbourhood of the real axis.  We assume that  $\lambda_0$ is simple and  that $\lambda_0$ is not a threshold of the essential spectrum. Let $u_0$ be a resonance solution of $A_\varnothing$ corresponding to $\lambda_0$, which shall be  chosen as in Theorem  \ref{th:residue_resolvent}.
\begin{theorem}\label{th:main_discrete_eigenvalues}
	There exist $\ell_0 >0$ and 
	$\varepsilon>0$ such that for all $\ell \in (0, \ell_0)$ the operator $A_{\Sigma_\ell}$ has exactly one resonance $\lambda(\ell) \in
	B( \lambda_0, \varepsilon)$, which satisfies the asymptotic estimate
	\begin{equation}\label{eq:main_formula_discrete_eigenvalues}
		\lambda(\ell)  = \lambda_0 -   \nu \cdot \gamma_1 u_0 (s_0)^2 \cdot \ell^{d} + \mathcal O(\ell^{d+1}) . 
	\end{equation}
	Here $\nu > 0$ is a constant given by \eqref{eq:const_nu}. 
\end{theorem}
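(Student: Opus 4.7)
The plan is to recast the resonance problem for $A_{\Sigma_\ell}$ as a non-linear eigenvalue problem for an operator-valued function living on the small window $\Sigma_\ell$, and then to apply an operator-valued Rouch\'e (Gohberg--Sigal) theorem both to count resonances in $B(\lambda_0, \varepsilon)$ and to extract the leading-order correction.

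First I would reduce the mixed boundary value problem to a boundary integral equation on $\Sigma_\ell$. For $\psi \in \widetilde H^{1/2}(\Sigma_\ell)$ let $w_\lambda(\psi)$ denote the outgoing solution of $(A-\lambda^2)w=0$ in $\Omega$ with $\gamma_0 w = \psi$ on $\Sigma_\ell$ and $\gamma_0 w = 0$ on $\partial\Omega\setminus\overline{\Sigma_\ell}$. Such a $w_\lambda(\psi)$ exists and depends meromorphically on $\lambda$ near $\lambda_0$, thanks to the analytic continuation of the Dirichlet resolvent of $A_\varnothing$ established previously. Then $\lambda$ is a resonance of $A_{\Sigma_\ell}$ iff the Dirichlet-to-Neumann operator
\begin{equation*}
    T_\ell(\lambda) : \widetilde H^{1/2}(\Sigma_\ell) \to H^{-1/2}(\Sigma_\ell), \qquad T_\ell(\lambda)\psi := \gamma_1 w_\lambda(\psi),
\end{equation*}
has a non-trivial kernel. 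Green's identity applied to $w_\lambda(\psi)$ and $u_0$ identifies the rank-one singular part of $T_\ell$ at $\lambda_0$ in terms of $\gamma_1 u_0|_{\Sigma_\ell}$.

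Next I would pull $T_\ell(\lambda)$ back through the chart $\phi$ and rescale $\Sigma_\ell$ to the fixed unit window $\Sigma$. The leading part of $T_\ell(\lambda)$ is governed by a frozen-coefficient half-space problem with the principal symbol of $A$ at $s_0$; its inverse on $\widetilde H^{1/2}(\Sigma)$ produces the constant $\nu$ appearing in \eqref{eq:const_nu}. Combining the Laurent expansion at $\lambda_0$ with the $\ell$-scaling of the half-space model, the smoothness of $\gamma_1 u_0$ near $s_0$ (so that $\gamma_1 u_0|_{\Sigma_\ell} = \gamma_1 u_0(s_0) + \mathcal O(\ell)$), and the fact that the $(d-1)$-dimensional surface measure of $\Sigma_\ell$ is of order $\ell^{d-1}$ with one additional factor of $\ell$ entering via the conormal derivative on the rescaled inner problem, one derives a scalar reduced equation whose leading order reads $\lambda - \lambda_0 = -\nu\,\gamma_1 u_0(s_0)^2\,\ell^d + \mathcal O(\ell^{d+1})$. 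To turn this into a rigorous statement of existence and uniqueness, I would split off the rank-one singular part $T_\ell(\lambda) = (\lambda - \lambda_0)^{-1} P_\ell + H_\ell(\lambda)$ with $H_\ell$ holomorphic near $\lambda_0$, and apply Gohberg--Sigal on $\partial B(\lambda_0, \varepsilon)$; since the $\ell$-dependent perturbation of $H_\ell$ is uniformly small on this contour, the number of characteristic values of $T_\ell$ inside $B(\lambda_0, \varepsilon)$ equals the unperturbed count, namely one.

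The main obstacle is the uniform control, in $\lambda$ on the contour and in $\ell$, of the rescaled operator $T_\ell(\lambda)$ and of its singular part, together with the invertibility of the leading-order half-space model on $\widetilde H^{1/2}(\Sigma)$. This relies crucially on the non-threshold hypothesis, which guarantees that the Dirichlet resolvent of $A_\varnothing$ has a holomorphic structure around $\lambda_0$ apart from the rank-one pole, and admits a convergent local expansion near $s_0$ compatible with the outgoing condition on the cylindrical end. A matched asymptotic analysis linking the inner (half-space) and outer (Dirichlet) regions, uniform in the contour parameter, is the technical heart of the argument.
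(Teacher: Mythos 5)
Your proposal follows essentially the same route as the paper: reduction to the truncated Dirichlet-to-Neumann operator on the window, rescaling to a fixed reference window, identification of the rank-one pole at $\lambda_0$ via the residue $u_0\otimes u_0$ of the Dirichlet resolvent, comparison with the frozen-coefficient model operator at $s_0$ (the paper's $\mathcal Q_0$, whose inverse yields $\nu$), and the Gohberg--Sigal theorem for both the counting and the extraction of $\lambda(\ell)-\lambda_0$. The only presentational difference is that the paper obtains the $\ell$-expansion of the rescaled operator from the Boutet-de-Monvel pseudodifferential calculus and a Neumann-series trace computation rather than a matched asymptotic analysis, but the substance of the argument is the same.
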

\paragraph{\bf 2nd Result (the threshold case):}
Let $\Lambda \in \RR$ be a branching point of order $2$ for the resolvent of $A_\varnothing$ and assume that $\Lambda$ is as simple resonance of  $A_\varnothing$ but does not admit any square integrable solutions. Let $u_0$ be a resonance solution of $A_\varnothing$ chosen as in the remark after Theorem \ref{th:residue_resolvent_branching point}.
\begin{theorem}\label{th:main_threshold_case}
	There exists $\ell_0 >0$ and 
	$\varepsilon>0$ such that for all $\ell \in (0, \ell_0)$ the operator $A_{\Sigma_\ell}$ has exactly one resonance $\lambda(\ell) \in
	B( \lambda_0, \varepsilon)\backslash\{\lambda_0\}$, which satisfies the asymptotic estimate
	\begin{equation}\label{eq:main_formula_threshold_case}
		\lambda(\ell)  = \Lambda - \nu^2 \cdot \gamma_1 u_0 (s_0)^4 \cdot \ell^{2d} + \mathcal O(\ell^{2d+1}) . 
	\end{equation}
	The constant $\nu>0$ is chosen as in the previous theorem. 
\end{theorem}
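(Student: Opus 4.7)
The plan is to reuse the Dirichlet-to-Neumann/boundary integral framework developed for Theorem~\ref{th:main_discrete_eigenvalues}, and to handle the square-root branching of the resolvent of $A_\varnothing$ at $\Lambda$ by introducing a uniformising parameter on the Riemann surface before applying the operator-valued Rouch\'e theorem.

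\textbf{Step 1: Reduction to a boundary operator.} As in the proof of Theorem~\ref{th:main_discrete_eigenvalues}, I would characterise the resonances of $A_{\Sigma_\ell}$ in a small neighbourhood of $\Lambda$ as the zeros of a meromorphic operator-valued function $M_\ell(\lambda)$ built from the Dirichlet-to-Neumann map of $A_\varnothing$ and the Neumann datum on $\Sigma_\ell$. Pulling back by the chart $\phi$ and rescaling $\phi(\Sigma_\ell) = \ell\,\phi(\Sigma)$ to the fixed set $\phi(\Sigma)$ yields an operator $\widetilde M_\ell(\lambda)$ on a fixed function space admitting an expansion
\begin{equation*}
\widetilde M_\ell(\lambda) = T(\lambda) + \ell^d K(\lambda) + \mathcal O(\ell^{d+1}),
\end{equation*}
where $T(\lambda)$ is the local principal-symbol DtN and $K(\lambda)$ encodes the resolvent of $A_\varnothing$ at $\lambda$ evaluated near $s_0$. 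This is identical to the setup of Theorem~\ref{th:main_discrete_eigenvalues}; the only difference is that the resolvent feeding $K$ now develops a branch-point singularity at $\lambda = \Lambda$.

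\textbf{Step 2: Uniformisation and scalar reduction.} Since $\Lambda$ is a branching point of order $2$, I would introduce a local parameter $\mu$ with $\lambda - \Lambda = -\mu^2 + \mathcal O(\mu^3)$ (up to a nonzero constant that can be absorbed into $\nu$), so that $\widetilde M_\ell$ extends analytically in $\mu$ near $\mu = 0$. The residue description of the resolvent at the threshold resonance from Theorem~\ref{th:residue_resolvent_branching point} (where $u_0$ appears even though $u_0 \notin L^2$), together with the simplicity hypothesis, reduces the kernel problem for $M_\ell$ to a rank-one problem. A Keldysh-/Birman--Schwinger-type argument then collapses it to a scalar characteristic equation
\begin{equation*}
F(\mu,\ell) = \mu\,\alpha(\mu) + \ell^d\,\gamma_1 u_0(s_0)^2\,\beta(\mu) + \mathcal O(\ell^{d+1}) = 0,
\end{equation*}
with $\alpha,\beta$ analytic near $\mu = 0$ and $\alpha(0),\beta(0)\neq 0$. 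I expect $\beta(0)/\alpha(0)$ to be precisely $\nu$ from Theorem~\ref{th:main_discrete_eigenvalues}, because the leading $\ell^d$ correction is a purely local object at $s_0$, unaffected by whether $\lambda$ is a regular point of the essential spectrum or a threshold.

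\textbf{Step 3: Rouch\'e and back-substitution.} Applying the operator-valued Rouch\'e theorem to $F(\,\cdot\,,\ell)$ on a small $\mu$-disk, for $\ell$ small there is exactly one zero
\begin{equation*}
\mu(\ell) = -\nu\,\gamma_1 u_0(s_0)^2\,\ell^d + \mathcal O(\ell^{d+1}),
\end{equation*}
which is nonzero for $\ell \in (0,\ell_0)$. Substituting into $\lambda = \Lambda - \mu^2 + \mathcal O(\mu^3)$ yields the asserted expansion~\eqref{eq:main_formula_threshold_case}, and the resulting resonance lies in $B(\Lambda,\varepsilon)\setminus\{\Lambda\}$.

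The main obstacle is Step~2. Choosing the uniformising parameter so that $\widetilde M_\ell$ is genuinely analytic in $\mu$ requires a careful analysis of the dispersion curves of the limit operator at the branching point, together with the correct Keldysh/residue expansion of the resolvent at a threshold resonance admitting no square-integrable eigenfunction. The delicate step is then to verify that the \emph{same} local constant $\nu$ from Theorem~\ref{th:main_discrete_eigenvalues} controls the leading perturbation; this reflects the intuition that a small window does not feel the global spectral subtleties of the threshold. Once this local identification is secured, the Rouch\'e counting and the translation back to the $\lambda$ variable proceed as in the non-threshold case, which explains the doubling $\ell^d \to \ell^{2d}$ and $\gamma_1 u_0(s_0)^2 \to \gamma_1 u_0(s_0)^4$.
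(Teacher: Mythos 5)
Your proposal follows essentially the same route as the paper: uniformise the order-two branch point via $\lambda = \Lambda - \zeta^2$, use the threshold residue expansion of Theorem \ref{th:residue_resolvent_branching point} with $\Pi_1 = 0$ to get a simple rank-one pole of the scaled Dirichlet-to-Neumann family in the $\zeta$-variable, rerun the Rouch\'e/logarithmic-residue argument of the non-threshold case to obtain $\zeta(\ell) = -\nu\,\gamma_1 u_0(s_0)^2\,\ell^d + \mathcal O(\ell^{d+1})$, and square upon back-substitution. Your Keldysh-type scalar characteristic equation is just a repackaging of the paper's Gohberg--Sigal trace computation for a rank-one residue, and the delicate point you flag (identifying the constant with the $\nu$ of Theorem \ref{th:main_discrete_eigenvalues}, modulo the normalisation of $u_0$ at the threshold) is treated at the same level of detail in the paper.
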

We start by considering the meromorphic continuation of the resolvent in order to define the notion of resonances. 
\section{The meromorphic continuation of the resolvent}
\subsection{The limiting absorption principle} 
As a first step we consider the operator
$$  A^0(z, \nabla_{(y,z)} ) := - \div ( \mathbf{a}^0(z)  \nabla_{(y,z)} u ) + a^0(z) u  , \qquad y, z \in \RR \times G , $$ 
acting on functions in $C^{\infty} (\RR \times G)$. Here  $\mathbf a^0$ and $a^0$ are chosen as above. Let $A^0$ be its  self-adjoint realisation in $L_2(\RR \times G)$ with Dirichlet boundary conditions on $\RR \times \partial G$. The regularity assumptions on $G$ and on the coefficients $\mathbf{a}^0$, $a^0$ imply that $D(A^0 ) := H^2(\RR \times G) \cap H^1_0(\RR \times G)$. Next we consider the family of parameter-dependent operators $(A^0(\xi))_{\xi \in \CC}$, which act as $A^0(z, i \xi , \nabla_z )$ on $ D(A^0(\xi) ) = H^2(G) \cap H^1_0(G)$.
As $D(A^0(\xi) )$ is compactly embedded into $L_2(G)$ the spectrum of $A^0(\xi)$ consists of a discrete set of eigenvalues accumulating only at infinity. Moreover, the operators $A^0(\xi)$ form a family of type (B) in the sense of Kato, cf.\ \cite[Chapter VII.\S{}4]{Kato}. Thus, the eigenvalues depend analytically on $\xi \in \CC$ with the possible exception of algebraic branching points.
\begin{lemma}\label{lemma:invariance_ess_spec}
 	We have $\omega \in \sigma_\ess (A_{\Sigma})$ if and only if  $\omega \in \sigma(A^0(\xi))$ for some $\xi \in \RR$. 
\end{lemma}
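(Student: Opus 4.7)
The strategy is the standard one: reduce the essential spectrum of $A_\Sigma$ to the spectrum of the translation-invariant model $A^0$ on the full cylinder $\RR \times G$ via Weyl sequences, and then identify $\sigma(A^0)$ with the union of the spectra of the transversal operators $A^0(\xi)$ by a partial Fourier transform in the axial variable $y$.

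For the direction $(\Leftarrow)$ I would construct singular Weyl sequences by hand. Given $\omega \in \sigma(A^0(\xi))$ with a normalised eigenfunction $\varphi \in H^2(G) \cap H^1_0(G)$, choose $\eta \in C^\infty_c(\RR)$ with $\|\eta\|_{L^2} = 1$ and disjoint translates $y_n \to \infty$, and set
\[
\psi_n(y,z) := n^{-1/2}\, \eta\bigl((y-y_n)/n\bigr)\, e^{i\xi y}\, \varphi(z).
\]
For $n$ large, $\supp \psi_n \subset \{y \ge R\}\times G \subset \Omega$ and $\psi_n$ vanishes on $\RR_{\ge R}\times\partial G$, so $\psi_n \in D(A_\Sigma)$. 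Using $A^0(\xi)\varphi = \omega\varphi$ together with the fact that $A_\Sigma$ and $A^0$ coincide on this support, a direct computation gives $\|(A_\Sigma - \omega)\psi_n\|_{L^2} = O(n^{-1})$, while the $\psi_n$ are orthonormal by disjointness of supports. Hence $\omega \in \sigma_\ess(A_\Sigma)$.

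For $(\Rightarrow)$, let $(\psi_n) \subset D(A_\Sigma)$ be an orthonormal Weyl sequence for $\omega$. The form inequality combined with $a \ge 0$ and uniform ellipticity produces a uniform $H^1$ bound, and Rellich compactness on bounded subdomains forces $\psi_n \to 0$ strongly in $L^2(\{y \le R\}\cap\Omega)$ along a subsequence. A smooth cutoff $\chi_R$ with $\chi_R \equiv 1$ on $\{y \ge R+1\}$ and vanishing on $\{y \le R\}$ then preserves the Weyl property: the commutator $[A_\Sigma,\chi_R]$ is a first-order differential operator with coefficients supported in $\{R \le y \le R+1\}$, and its contribution is controlled by the product of the $H^1$ bound on $\psi_n$ with the $L^2$-mass of $\psi_n$ on this annulus, which is $o(1)$. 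The resulting functions lie in the cylindrical part, satisfy Dirichlet conditions on $\RR_{\ge R}\times\partial G$, and extended by zero in $y$ give a Weyl sequence for $A^0$ on $\RR \times G$. Hence $\omega \in \sigma(A^0)$. A partial Fourier transform in $y$ now provides the unitary equivalence $A^0 \cong \int^\oplus_{\RR} A^0(\xi)\,\dd\xi$, so $\sigma(A^0) = \overline{\bigcup_{\xi\in\RR} \sigma(A^0(\xi))}$; uniform ellipticity gives $A^0(\xi) \ge c\xi^2 - C$, so each continuous eigenvalue branch $\lambda_n(\xi)$ has image of the form $[a_n,\infty)$ with $a_n \to \infty$, and the union is already closed.

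The main technical obstacle is the localisation step in $(\Rightarrow)$: taming the cutoff commutator and ensuring that no spectral mass is lost to the bounded component of $\Omega$. The Neumann window $\Sigma$ causes no trouble here because it lies entirely in $\partial\Omega \cap \{y \le 0\}$, so cutoffs placed far out in the cylindrical end never encounter it and the boundary condition seen by $\chi_R\psi_n$ is purely Dirichlet.
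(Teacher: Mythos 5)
Your proposal follows essentially the same route the paper takes (and only sketches): the Weyl criterion to identify $\sigma_\ess(A_{\Sigma})$ with $\sigma_\ess(A^0)$, plus the direct-integral decomposition $A^0\cong\int_{\oplus\RR}A^0(\xi)\,\dd\xi$ to identify the latter with $\bigcup_{\xi}\sigma(A^0(\xi))$; both directions are worked out correctly, including the closedness of the union via the coercivity bound $A^0(\xi)\ge c\xi^2-C$.

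One step is justified imprecisely. In the localisation for $(\Rightarrow)$ the commutator $[A,\chi_R]\psi_n$ contains the term $(\nabla\chi_R)\cdot\mathbf a\nabla\psi_n$, so its $L^2$-norm is \emph{not} controlled by the product of the global $H^1$ bound with the $L^2$-mass of $\psi_n$ on the annulus $\{R\le y\le R+1\}$; you genuinely need $\|\nabla\psi_n\|_{L^2}\to 0$ on the annulus, and the global $H^1$ bound only gives boundedness there. This is easily repaired: a Caccioppoli-type estimate (test the form with $\theta^2\psi_n$ for a cutoff $\theta$ equal to $1$ on the annulus and supported in a slightly larger bounded set) yields
\[
\|\nabla\psi_n\|_{L^2(\supp\theta\,\cap\,\{\theta=1\})}^2\le C\bigl(\|\psi_n\|_{L^2(\supp\theta)}^2+\|A_\Sigma\psi_n\|_{L^2}\,\|\psi_n\|_{L^2(\supp\theta)}\bigr),
\]
and the right-hand side tends to zero by the Rellich step you already carried out. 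With that correction the argument is complete; alternatively one can avoid the issue altogether by using the form (weak) version of the Weyl criterion, which the paper itself mentions in connection with low boundary regularity.
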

The proof is based on the first observation that 
$$ \sigma(A^0) =  \sigma_\ess (A^0)= \{ \omega \in \RR : \exists \xi \in \RR \text{ with }  \omega \in \sigma(A^0(\xi)) \} . $$
This follows from the fact that  $A^0$ is unitary equivalent to the direct integral operator
$$ 
		  \int_{\oplus \RR} A^0 (\xi) \; \dd \xi  .
$$
Moreover, we have $\sigma_{\mathrm{ess}} (A_{\Sigma}) = \sigma_{\mathrm{ess}}(A^0)$. This well-known assertion goes back to Birman \cite{Birman62}, where he considered a perturbed exterior domain and  proved a weak Schatten estimate for the difference of the corresponding resolvents. In our case it is sufficient to apply  the Weyl criterion and the assertion follows. Note that the assertion Lemma \ref{lemma:invariance_ess_spec} remains true if we use lower regularity assumptions on $\partial G$. In this case one may use a  weak notion of Weyl sequences, cf.\ e.g.\ \cite{KrejcirikZh}. 
\begin{remark}
Note that we have $\sigma(A^0(\xi)) = \sigma(A^0(- \xi))$ for all $\xi \in \RR$.  Indeed,  if  $\psi$ is an eigenfunction of $A^0(\xi)$ then $\overline{\psi}$ is an eigenfunction of $A^0(-\xi)$ for the same eigenvalue. 
\end{remark}
Now we consider the thresholds of the essential spectrum of $A^0$. To this end we look at the parameter-dependent family $A^0(\xi)$ and use the following lemma, whose proof is given \cite[Theorem VIII.3.9]{Kato}.
\begin{lemma}
	There exist real-analytic functions $\mu_k : \RR \to \RR$, $P_k : \RR \to \mathcal L(L_2(G))$, $k\in \NN$, 
	where the $P_k(\xi)$'s are mutually orthogonal  projections, such that 
	$$ A^0(\xi) = \sum_{k=1}^\infty \mu_k (\xi) P_k (\xi) , \qquad \xi \in \RR . $$
\end{lemma}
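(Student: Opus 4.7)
The plan is to verify that $(A^0(\xi))_{\xi \in \RR}$ is a self-adjoint holomorphic family of type (B) in Kato's sense with compact resolvent, and then invoke the cited theorem (VIII.3.9) to read off the real-analytic decomposition. The sesquilinear form attached to $A^0(\xi)$ has common form domain $H^1_0(G)$ and reads
\begin{equation*}
\mathfrak{a}_\xi[u,v] = \int_G \left\langle \mathbf{a}^0(z) \begin{pmatrix} i\xi u \\ \nabla_z u \end{pmatrix}, \begin{pmatrix} i\xi v \\ \nabla_z v \end{pmatrix} \right\rangle_{\CC^d} \dd z + \int_G a^0(z) u(z) \overline{v(z)} \dd z,
\end{equation*}
which is polynomial of degree two in $\xi$, so the assignment $\xi \mapsto \mathfrak{a}_\xi$ is holomorphic. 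For real $\xi$, the symmetry of $\mathbf{a}^0$ and the reality of $a^0\ge 0$ render the form symmetric, and uniform ellipticity combined with Poincar\'e's inequality yields uniform coercivity on $H^1_0(G)$. Hence $(A^0(\xi))_{\xi\in\RR}$ is indeed a self-adjoint holomorphic family of type (B).

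The compact embedding $H^2(G)\cap H^1_0(G)\hookrightarrow L_2(G)$, which follows from the boundedness of $G\subseteq\RR^{d-1}$ together with its $C^{1,1}$ boundary regularity, guarantees that each $A^0(\xi)$ has compact resolvent, so its spectrum consists of a discrete sequence of real eigenvalues of finite multiplicity accumulating only at $+\infty$.

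Now Kato's theorem VIII.3.9 applies: for a self-adjoint holomorphic family with compact resolvent on a real interval, the eigenvalues and the associated spectral projections can be chosen real-analytic in $\xi\in\RR$, and the projections are mutually orthogonal at each $\xi$ (no branch points occur on the real axis, by the Rellich--Kato refinement in the self-adjoint case). Combining this with the spectral resolution of a self-adjoint operator with compact resolvent yields the claimed identity $A^0(\xi)=\sum_k \mu_k(\xi)P_k(\xi)$. The only genuine subtlety -- and therefore what I would flag as the main obstacle -- is the enumeration across eigenvalue crossings: ordering by size fails to be analytic at a collision of branches, but Kato's theorem supplies an analytic reindexing at the cost of monotonicity in $k$. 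Since the heavy lifting is done inside the cited theorem, our task reduces to checking its abstract hypotheses, which is exactly what steps one and two above accomplish.
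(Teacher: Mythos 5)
Your proposal is correct and matches the paper's approach: the paper offers no proof beyond citing Kato's Theorem VIII.3.9, having already noted that the $A^0(\xi)$ form a type (B) family and have compact resolvent, which is exactly the hypothesis-checking you carry out. Your verification of symmetry for real $\xi$ and of coercivity is a sound (if slightly more explicit) version of the same argument.
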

Note that each $\mu_k$ may be continued to an analytic function defined in some neighbourhood of the real axis. However, a common  domain of analyticity for all $k \in \NN$ does not necessarily exist, cf.\ e.g.\ the remarks in \cite[VIII.3]{Kato}. In what follows we assume that 
\begin{align}
	\inf \sigma_\ess (A^0) = \inf_{k \in \NN } \inf_{\xi \in \RR} \mu_k(\xi)  >0 . 
\end{align}
\begin{defi}
	A value $\omega > 0 $ is called spectral threshold of $A^0$ if there exists $\xi \in \RR$ and $k \in \NN$ such that 
	$\mu_k(\xi) = \omega^2$ and $\mu_k'(\xi) = 0$. 
\end{defi} 
The above definition of thresholds is chosen such that it will give the branching points of the resolvent $\omega \mapsto (A_\Sigma - \omega^2)^{-1}$ on the real axis. We also want to refer to  \cite{GerardNier} for a more profound investigation. 
An immediate consequence of the definition is that each local extrema of the $\mu_k$ is a threshold. 
In particular choosing $\Lambda_1 > 0 $ such that  $\Lambda_1^2 =  \inf \sigma_\ess(A_\Sigma)$ forms the first threshold. Note that there exists infinitely many thresholds, which we order increasingly $0 < \Lambda_1 < \Lambda_2 < \ldots$
\begin{lemma}\label{lemma:thresholds}
	The thresholds form a discrete set accumulating only at infinity.
\end{lemma}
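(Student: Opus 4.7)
The plan is to reduce the lemma to showing that for every $M > 0$ only finitely many thresholds lie in $(0,M]$; the conclusion then follows at once.

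First I would prove a uniform coercivity estimate
$$ \mu_k(\xi) \ge c\xi^2 - C , \qquad k \in \NN, \; \xi \in \RR , $$
with constants $c,C>0$ independent of $k$ and $\xi$. This follows from the strong ellipticity of $\mathbf{a}^0$: expanding the sesquilinear form of $A^0(\xi)$ on $H^1_0(G)$ and absorbing the cross terms proportional to $i\xi$ by Cauchy-Schwarz with a small parameter yields
$$ \Re\, \mathfrak{a}_\xi[u,u] \ge c \xi^2 \|u\|_{L_2(G)}^2 - C \|u\|_{L_2(G)}^2 , $$
and the estimate transfers to each eigenvalue by the min-max principle. Choosing $R$ so that $cR^2 - C > M^2$, we see that $\sigma(A^0(\xi)) \cap [0, M^2] = \varnothing$ whenever $|\xi|>R$; hence any threshold $\omega \in (0,M]$ must arise from a critical point of some $\mu_k$ lying in the compact interval $[-R,R]$.

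Second, I would argue that only finitely many analytic branches $\mu_k$ attain values in $[0, M^2]$ on $[-R,R]$. Suppose, for contradiction, that infinitely many distinct indices $k_j$ admit points $\xi_j \in [-R, R]$ with $\mu_{k_j}(\xi_j) \le M^2$. A compactness argument produces a limit point $(\xi^*, \mu^*) \in [-R,R] \times [0,M^2]$ with $\mu^* \in \sigma(A^0(\xi^*))$ by closedness of the spectrum. Kato's analytic perturbation theory then implies that in a sufficiently small bidisk centred at $(\xi^*, \mu^*)$ only finitely many analytic branches of eigenvalues appear (their number being bounded by the algebraic multiplicity of $\mu^*$ as an eigenvalue of $A^0(\xi^*)$), which contradicts the infinitude of the distinct $k_j$.

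Third, for each of the finitely many relevant branches $\mu_k$, real-analyticity together with $\mu_k(\xi) \to \infty$ as $|\xi|\to\infty$ forces $\mu_k$ to be non-constant. Hence the critical set $\{\xi \in \RR : \mu_k'(\xi) = 0\}$ is discrete, and the critical points with critical value $\le M^2$ lie inside the compact sublevel set $\{\xi : \mu_k(\xi) \le M^2\}$; thus only finitely many contribute. Summing the finitely many finite contributions yields a finite number of thresholds in $(0, M]$. The step I anticipate to be most delicate is the finiteness-of-branches argument in the compact window, which rests essentially on the local analytic structure of the spectrum from Kato; the coercivity estimate and the per-branch analysis are then standard.
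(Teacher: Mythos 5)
Your proposal is correct and follows essentially the same route as the paper: a uniform coercivity bound to confine the relevant parameters to a compact window, Kato's local analytic perturbation theory plus compactness to show only finitely many branches $\mu_k$ matter there, and then discreteness of the critical set of each non-constant real-analytic branch. The only cosmetic difference is that you run the finiteness-of-branches step by contradiction via a limit point, whereas the paper uses a direct finite covering of the compact rectangle; your third step merely makes explicit what the paper leaves implicit in "since the $\mu_k$ are real-analytic".
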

\begin{proof}
	We show that for each $\omega_0 \in \RR$ the set
	$$ \{ k \in \NN : \mu_k (\RR) \cap  [-\omega_0 , \omega_0 ] \neq \varnothing \} $$
	has only  finitely many elements. Since the $\mu_k$ are real-analytic this will prove the assertion. The ellipticity of $A(x,\nabla_x)$ implies that  there are constants $c_0, c_1  > 0$ such that 
	$$  \scal{A^0 (\xi) u}{u}  \ge c_0 ( \xi^2 \| u \|_{L^2(G)}^2 + \| \nabla u\|_{L^2(G)}^2 ) - c_1 \| u\|_{L_2(G)}^2 , \qquad 
	u \in D(A^0(\xi) ) . $$ 
	From the min-max principle for self-adjoint operators we get
	$\inf_{k} \mu_k (\xi) \to \infty$ as $|\xi| \to \infty$. In particular we only need to consider the set
	$$ \{ k \in \NN : \mu_k ([-r,r]) \cap  [-\omega_0 , \omega_0 ] \neq \varnothing  \}  $$
	for sufficiently large $r>0$. Using the local perturbation theory of the eigenvalues we observe that
	for each $\xi \in [-r,r]$ and $\omega \in [- \omega_0 ,\omega_0]$ there exist
	neighbourhoods $U_\xi$ and $V_\omega$ such that 
	$\{ k \in \NN : \mu_k (U_\xi) \in V_\omega\}$ is finite. Since  $[-r,r] \times [- \omega_0 ,\omega_0]$ is compact the assertion follows.
\end{proof}
We write $\mathbb{H}_+ := \{ z \in \CC : \Im(z) > 0 \}$ and  consider 
$$  \omega \mapsto R^0 (\omega) =  (A^0  - \omega^2)^{-1} 
$$
as a holomorphic mapping from $\mathbb{H}_+$ with values in $\mathcal L( L_2(\RR \times G); H^2(\RR \times G))$. 
To extend it to the lower half-plane one  introduces exponentially weighted spaces. Let $\chi \in C_c^\infty(\RR)$ be chosen such that 
$0 \le \chi \le 1$, $\chi=0$ on $(-1,1)$ and  $\chi = 1$ on $(-\infty,-2 ) \cup (2,\infty)$. For $\beta \in \RR $ and $s \in \RR$ we put 
\begin{align*}
  	H^s_{\beta} (\RR \times G) = \left\{ u \in H^s_{\mathrm{loc}} (\RR \times G) : 
  	\chi(y) e^{-\beta |y| } u( y,z) \in H^s (\RR \times G) \right\}  .
\end{align*}
For  $s=0$ we write $L_{2,\beta}(\RR \times G) := H^0_{\beta}(\RR \times G)$. 
\begin{theorem}\label{th:cont_resolvent_A^0}
	Let $\beta > 0$. There exists an open neighbourhood $U$ of $\overline{\mathbb H_+}$ such that the function 
	$$ \omega \mapsto R^0(\omega)  \in \mathcal L(L_{2,-\beta}(\RR \times G) ; H^2_{\beta}(\RR \times G) ) 
	$$ 
	may be continued to a multiple-valued holomorphic function on $U \backslash \{ \pm \Lambda_j \}$. 
	The points $\pm \Lambda_j \in \RR$ are branching points of finite order. 
\end{theorem}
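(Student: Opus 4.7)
The plan is to use the partial Fourier transform $\mathcal F_y$ in the longitudinal variable $y$ to reduce the resolvent $R^0(\omega)$ to a fibrewise calculation involving the parameter-dependent operators $A^0(\xi)$. For $\omega \in \mathbb H_+$ and $f \in C_c^\infty(\RR \times G)$ one writes
\begin{equation*}
R^0(\omega) f(y,z) = \frac{1}{2\pi} \int_\RR e^{iy\xi} \bigl(A^0(\xi) - \omega^2\bigr)^{-1} (\mathcal F_y f)(\xi,z) \, \dd\xi ,
\end{equation*}
which makes sense because $\omega^2 \notin \sigma(A^0(\xi)) \subset [\Lambda_1^2,\infty)$ for every real $\xi$. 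The role of the exponential weights is made precise by a Paley--Wiener argument: a function in $L_{2,-\beta}(\RR \times G)$ has a $\xi$-Fourier transform that extends holomorphically to the strip $|\Im \xi| < \beta$, while the target space $H^2_\beta(\RR \times G)$ accommodates the factors $e^{\pm \beta |y|}$ that arise from an imaginary displacement of the contour in $\xi$.

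Next I would analyse the operator-valued function $\xi \mapsto (A^0(\xi)-\omega^2)^{-1}$ in that strip. Using the spectral representation $A^0(\xi) = \sum_k \mu_k(\xi) P_k(\xi)$ together with the coercivity bound $\scal{A^0(\xi) u}{u} \ge c_0 \xi^2 \| u\|_{L_2(G)}^2 - c_1 \| u\|_{L_2(G)}^2$ from the proof of Lemma \ref{lemma:thresholds}, only finitely many dispersion curves $\mu_k$ can take values in any prescribed compact set of $\omega^2$-values, and the tail yields a holomorphic remainder that is controlled by a Neumann series. For each relevant $k$, the equation $\mu_k(\xi) = \omega^2$ has, by analytic continuation from $\omega \in \mathbb H_+$, finitely many roots $\xi = \xi_{k,j}(\omega)$ that depend holomorphically on $\omega$ as long as $\mu_k'(\xi_{k,j}(\omega)) \ne 0$.

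The continuation itself is carried out by deforming the contour in $\xi$. For $\omega \in \mathbb H_+$ each $\xi_{k,j}(\omega)$ lies strictly off $\RR$, so the integral along $\RR$ may be replaced by an integral along any homotopic contour inside $|\Im \xi| < \beta$ that separates the poles consistently. As $\omega$ approaches and then crosses the real axis, I would deform this contour into small semicircular bumps that keep each pole on the side it occupied for $\omega \in \mathbb H_+$; the Sokhotski--Plemelj type decomposition then yields an integral along $\RR$ plus residue contributions, together depending holomorphically on $\omega$ in a neighbourhood $U$ of $\overline{\mathbb H_+} \setminus \{\pm \Lambda_j\}$. The mapping property into $H^2_\beta(\RR \times G)$ follows from the standard elliptic estimate on the fibre $G$ together with $|e^{iy\xi}| \le e^{\beta |y|}$ along the displaced contour.

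Finally, the branching structure at the thresholds is local. At $\omega = \pm \Lambda_j$ some curve $\mu_k$ satisfies $\mu_k(\xi_0) = \Lambda_j^2$ and $\mu_k'(\xi_0) = 0$, so two roots $\xi_{k,j}^\pm(\omega)$ of $\mu_k(\xi) = \omega^2$ collide at $\xi_0$. A Morse-type expansion $\mu_k(\xi) - \omega^2 = \tfrac12 \mu_k''(\xi_0)(\xi - \xi_0)^2 - (\omega^2 - \Lambda_j^2) + O((\xi - \xi_0)^3)$ gives $\xi_{k,j}^\pm(\omega) - \xi_0 \sim \pm\sqrt{2(\omega^2 - \Lambda_j^2)/\mu_k''(\xi_0)}$, and the two branches interchange when $\omega$ encircles $\Lambda_j$ once; this is precisely the algebraic branching of order two claimed in the statement. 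The principal obstacle I anticipate is the global bookkeeping once several dispersion curves contribute simultaneously in the same range of $\omega^2$: the deformed contour must be selected in a manner compatible with all active poles at once, and one must verify that the resulting continuation is independent of the representative $f$. The discreteness of thresholds from Lemma \ref{lemma:thresholds}, combined with the quadratic coercivity in $\xi$, is what ultimately reduces the analysis on every compact subset of $U \setminus \{\pm \Lambda_j\}$ to finitely many poles and makes the argument go through.
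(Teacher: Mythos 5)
Your proposal follows essentially the same route as the paper: partial Fourier transform in $y$, a Paley--Wiener argument giving analyticity of $\hat f$ in the strip $|\Im\xi|<\beta$, reduction to finitely many dispersion curves via the coercivity in $\xi$, contour deformation past the roots of $\mu_k(\xi)=\omega^2$ producing a shifted integral plus residue terms of the form $\mu_k'(\xi)^{-1}\scal{P_k(\xi)\hat f_1(\xi)}{\hat f_2(\overline\xi)}$, and branching where two such roots collide at a threshold. The one place where your argument is narrower than the claim is the threshold analysis: your Morse-type expansion presupposes $\mu_k''(\xi_0)\neq 0$ and yields only order-two branching, whereas the theorem asserts branching of \emph{finite} order and the paper accordingly factors $\mu_{k_\alpha}(\xi)-\omega_0^2=G_\alpha(\xi)^{n_\alpha}$ for arbitrary multiplicity $n_\alpha$ and invokes a Puiseux expansion of the colliding roots $\xi_{\alpha,\delta}(\omega)$; since $\mu_k$ is merely real-analytic, a degenerate critical point cannot be excluded, so you should replace the quadratic expansion by this general factorisation to cover all cases.
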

\begin{proof}
	Let  $\beta > 0$  and $f_1, f_2 \in L_{2,-\beta}(\RR \times G )$. We denote by  $\hat f_1, \hat f_2 \in L_2( \RR; L_2(G))$ their Fourier transforms with respect to the horizontal variable. Due to the exponential weight we may continue the $\hat f_i$ analytically to  the strip $\{ \xi \in \CC : | \Im (\xi) | < \beta \}$. For $\omega \in \mathbb{H}_+$ we have
	\begin{align*}
		\scal{R^0(\omega)  f_1}{f_2}
 		&= \int_{\RR} \scal{ \left( A^0(\xi) - \omega^2 \right)^{-1} \hat f_1 (\xi) }{\hat f_2(\xi) }
		\; \dd \xi .
	\end{align*}
	For $\omega \in \CC$ we define
	\begin{align*}
		  \Xi  (\omega) &:= \{\xi \in \CC : \omega^2 \in \sigma (A^0(\xi)) \} 		  
	\end{align*}
	and $\Xi_\beta (\omega) := \Xi  (\omega) \cap \{ \xi \in \CC: | \Im(\xi) | \le  \beta \}$. 
	Note that $\Xi(\omega)$ is discrete and $\Xi_\beta(\omega)$ is finite, cf.\ e.g.\ \cite{GohbergSigal}. 
	Let $\omega_0 \in \RR$ be fixed and $\varepsilon > 0$  sufficiently small. Then there exist $r>0$ such that for all $\omega \in B(\omega_0, \varepsilon)$ and $|\Re(\xi)| \ge r$ and $|\Im(\xi)| \le \beta$ the operator $(A^0(\xi) - \omega^2)^{-1}$ exists and satisfies
	\begin{equation}\label{eq:estimate_param_depend}
 		\xi^2 \| (A^0(\xi) - \omega^2)^{-1} f \|_{L_2(G)}^2 + \| (A^0(\xi) - \omega^2)^{-1} f \|_{H^2(G)}  \le c 
 			\| f\|_{L_2(G)}^2 .
	\end{equation}
	The constant $c>0$ does not depend on  $f$, $\omega$ and $\xi$. For proofs and further references we refer to  \cite{Mazya, NazarovPlam}. 
	Choosing $\varepsilon$ and $\beta$ sufficiently small we may assume that 
	\begin{equation}
		\Xi_{\beta} (\omega_0) \subseteq \RR \quad \text{and} \quad \Xi (\omega) \cap \{ \xi \in \CC : |\Im(\xi)| = \beta\} 
	= \varnothing
	\end{equation}
	for all $\omega \in B(\omega_0, \varepsilon)$. Using the estimate \eqref{eq:estimate_param_depend} we may shift the path of integration 
	and obtain for $\omega \in \mathbb{H}_+ \cap B(\omega_0, \varepsilon)$ that 
	\begin{align}\label{eq:shift_path} 
		\notag \scal{R^0(\omega)  f_1}{ f_2}
 		&= \int_{\RR +  i \beta} \scal{ \left( A^0(\xi) - \omega^2 \right)^{-1} \hat f_1 (\xi) }{\hat f_2(\overline{\xi}) } \; \dd \xi\\
 		&\quad + 2\pi i \sum_{\substack{\xi_i \in \Xi_{\beta}(\omega)\\ \Im (\xi_i) > 0}} \mathrm{Res}_{\xi_i} \scal{ \left( A^0(\xi) - \omega^2 \right)^{-1} \hat f_1 (\xi) }{\hat f_2(\overline{\xi}) }.
	\end{align}
	The first term is well-defined for all $\omega \in B(\omega_0, \varepsilon)$ and gives rise  to a bounded linear operator from $L_{2,-\beta}(\RR \times G)$ to $H^2_{\beta}(\RR \times G)$. It remains to  consider the residual term and the behaviour of $\Xi_{\beta} (\omega)$ as $\omega$ crosses the real axis. As 
	$$ ( A^0(\xi)  - \omega^2)^{-1}  = \sum_{k=1}^{\infty} (\mu_{k}(\xi) - \omega^2)^{-1} P_k(\xi) , \qquad  \xi \in \RR , $$
	we easily obtain for $\omega \in B(\omega_0, \varepsilon)$  that 
	$$ \Xi_{\beta}(\omega) = \{ \xi \in \CC :  |\Im(\xi)| \le \beta \wedge \mu_k(\xi) = \omega^2 \}  , $$
	if $\beta$ and $\varepsilon$ are chosen sufficiently small. 
	Now we consider all pairs $(\xi_\alpha , k_\alpha) \in \RR \times \NN$, $\alpha = 1, \ldots, m$
	such that $\mu_{k_\alpha}(\xi_\alpha) =  \omega_0^2$. 
	For each $\alpha =1, \ldots, m$ there exists a neighbourhood $U_{\xi_\alpha}$
	of $\xi_\alpha$ and an invertible holomorphic function $G_\alpha : U_{\xi_\alpha} \to G_\alpha(U_{\xi_\alpha})$ with $G_\alpha(\xi) = 0$ and 
	$$  \mu_{k_\alpha} (\xi) - \omega_0^2 =  G_\alpha(\xi )^{n_\alpha}  , \qquad \xi \in U_{\xi_\alpha} .  $$
	Here $n_\alpha$ is the multiplicity of the root of $\mu_{k_\alpha}(\cdot) -  \omega_0^2$ at $\xi_\alpha$. For $|\omega - \omega_0|< \varepsilon$ the equation $\mu_{k_\alpha} (\xi) =  \omega^2$ has exactly $n_\alpha$ solutions near $\xi_\alpha$ which are given by
   	\begin{equation}\label{eq:def_xi_alpha_delta}
		\xi_{\alpha, \delta} (\omega) = G_\alpha^{-1} \left(  
		e^{\frac{2\pi i \delta}{n_\alpha} } \left( \omega^2 - \omega_0^2 \right)^{1/n_\alpha}  \right)  ,
		\qquad \delta = 0,  \ldots,  n_\alpha -1  ,
		\end{equation}
	and we obtain  $\Xi_\beta(\omega) = \{ \xi_{\alpha,\delta}(\omega) : \alpha = 1, \ldots N , \; \delta= 1, \ldots n_\alpha\}$. Moreover, 
   	\begin{align}\label{eq:residue}
		\mathrm{Res}_{\xi_{\alpha,\delta}(\omega)}\frac{\scal{ P_{k_\alpha}(\xi ) \hat f_1 (\xi) }{\hat f_2(\overline{\xi})}}{\mu_{k_\alpha}(\xi) - \omega ^2} 
 		&= \frac{\scal{P_{k_\alpha}(\xi_{\alpha,\delta}(\omega)) \hat f_1(\xi_{\alpha,\delta}(\omega))}{\hat f_2 ( \overline{\xi_{\alpha,\delta}(\omega)} )}}{\mu_{k_\alpha}'(\xi_{\alpha,\delta}(\omega))} . 
    	\end{align}
 	If $\omega_0$ is not a threshold then  we have 
 	$n_\alpha=1$ for all $\alpha = 1,\ldots, N$. In particular, the functions $\xi_{\alpha,0}(\omega)$  depend analytically on $\omega$ and so does the expression \eqref{eq:residue}. 
	If $n_\alpha>1$ for some $\alpha$ then we have a Puiseux expansion for $\xi_{\alpha,\delta}(\omega)$
	and we obtain also a meromorphic Puiseux expansion for the residual terms. This proves the assertion. 
\end{proof}
In what follows we denote by $\Psi_\beta$ the maximal Riemannian manifold such that the operator $R^0(\cdot) : L_{2, - \beta}(\RR \times G) \to 
H^2_{ \beta}(\RR \times G)$ is well-defined and analytic. 
\begin{example}
We want to outline the procedure. We assume that the eigenvalue curves have the following form:
\begin{center}
\begin{tikzpicture}
\begin{scope}[font=\scriptsize]
	\draw [thick,->] (-3.5,0) -- (3.5,0) node[anchor=north west] {$\xi$};
	\draw [thick,->] (0,-0.5) -- (0,3) node[anchor=south] {$\sigma(A(\xi))$};
	\draw [thick,smooth, samples=100, domain=-3.25:3.25] plot(\x, {0.2*(\x)^2+0.35});
	\draw [thick,smooth, samples=100, domain=-1.7:1.7] plot(\x, {0.1*(1.5*\x + 2)*(1.5*\x + 1)*(1.5*\x - 1)*(1.5*\x - 2) + 1.5});
	\draw [dashed] (3.3, 1.5) -- (-3.3, 1.5)   node[anchor= east] {$\omega_0^2$};
 	\draw [dashed] (-3.3, 0.35) -- (3.3, 0.35) node[anchor= west] {$\Lambda_1^2$};
	\draw [dashed] (-3.3, 1.275) -- (3.3, 1.275) node[anchor= west] {$\Lambda_2^2$};
	\draw [dashed] (-3.3,1.9) -- (3.3, 1.9) node[anchor = west] {$\Lambda_3^2$}; 
\end{scope}
\end{tikzpicture}
\end{center}
Note that such  dispersion curves  appear e.g.\ in linear elasticity. Let  $\Lambda_1, \Lambda_2$ and $\Lambda_3$ be the first spectral thresholds and let $\mu_1(\cdot), \mu_2(\cdot)$ denote the first eigenvalue curves with projections   $P_1(\cdot), P_2(\cdot)$. Assume that the branching points are of order $2$. We choose $\omega_0$ such that   $\Lambda_2 < \omega_0 < \Lambda_3$. If $\beta > 0$ and $ \varepsilon >0 $ are sufficiently small we have 
\begin{align*}
	\Xi_{\beta}  (\omega) &=  \{ \xi_1 (\omega) , \ldots , \xi_6 (\omega)  \} , 
\end{align*}
for all $B(\omega_0, \varepsilon)$. We assume that for $\omega = \omega_0$ we have $\xi_i(\omega_0) < \xi_{i+1}(\omega_0)$. For $\omega \in \mathbb H_+ \cap B(\omega_0, \varepsilon)$ we obtain  
$$ \Im (\xi_3(\omega) ) >  0  , \qquad \Im (\xi_5(\omega) ) >  0 , \qquad \Im (\xi_6(\omega) ) >  0 $$
as may be easily seen by evaluating the sign of the derivatives of the $\mu_i$. If $\omega$ crosses the real axis we will obtain  the following behaviour for the functions $\xi_i(\cdot)$, $i \in \{4,5,6\}$:
\begin{center}
\begin{tikzpicture}
\begin{scope}[thick,font=\scriptsize]
 	\draw [->] (0,0) -- (4,0); 
	\draw (0.35 ,0.05) -- (0.35,-0.05) node[anchor = north] {$\Lambda_1$};
	\draw (1.7,0.05) -- (1.7,-0.05) node[anchor = north] {$\Lambda_2$};
	\draw (3.6,0.05) -- (3.6,-0.05) node[anchor = north] {$\Lambda_3$};
	\draw [line width= 0pt, opacity = 0.15, fill = black] (0.35,-0.1) rectangle (3.85,0.1);
	\draw (2.6,0.05) -- (2.6,-0.05) node[anchor = north] {$\omega_0$};
	\draw [fill = black] (2.8,0.7) circle (0.8pt) node[anchor = south] {$\tilde \omega$};  
	\draw[<-] (2.9,-0.8) arc (-45:45:1cm);
	\draw [fill = black] (2.8,-0.9) circle (0.8pt); 
	\draw[->] (2.7,-1) arc (-60:-200:1.3cm);
	\draw [fill = black] (0.9,0.7) circle (0.8pt); 
	\draw[->] (1,0.8) arc (140:45:1.1cm);
\begin{scope}[xshift=8.5cm,xscale=-1,thick,font=\scriptsize]
 	\draw [<-] (-3,0) -- (3,0);
 	\draw [->] (2,-2) -- (2,2);
	\draw [dashed] (-2.75,1.5) -- (2.75,1.5) node[anchor = east] {$\beta_1$};
	\draw [dashed] (-2.75,-1.5) -- (2.75,-1.5) node[anchor = east] {$-\beta_1$};
	\draw[fill = black] (-2,0) circle (0.8pt) node[anchor = north] {$\xi_6(\omega_0)$}; 
	\draw[fill = black] (-2.2,-0.7) circle (0.8pt);  
	\draw[<-] (-2.3,-0.65) arc (-135:-225:0.9cm);
	\draw[fill = black] (-2.25,0.75) circle (0.8pt) node[anchor = south] {$\xi_6(\tilde\omega)$};
	\draw[<-] (-2.15,0.65) arc (60:10:1.5cm);
	\draw[fill = black] (-1.4,-0.52) circle (0.8pt);
	\draw[<-] (-1.5,-0.62) arc (-70:-90:1.7cm);
	\draw[fill = black] (-0.2,0) circle (0.8pt) node[anchor = north] {$\xi_5(\omega_0)$}; 
	\draw[fill = black] (-0.4,-0.9) circle (0.8pt); 
	\draw[<-] (-0.5,-0.8) arc (-150:-200:1.4cm);
	\draw[fill = black] (-0.55,0.5) circle (0.8pt)  node[anchor = south west] {$\xi_5(\tilde\omega)$};
	\draw[<-] (-0.45,0.6) arc (140:100:1cm);
 	\draw[fill = black] (0.25,1) circle (0.8pt);
 	\draw[<-] (0.35,1) arc (90:45:0.9cm);
	\draw[fill = black] (1.1,0) circle (0.8pt);
	\draw(0.9,0) node[anchor = north] {$\xi_4(\omega_0)$}; 
	\draw[fill = black] (1.1,0.6) circle (0.8pt); %
	\draw[<-] (1.2,0.5) arc (30:-30:1.4cm);
	\draw[fill = black] (1.1,-1) circle (0.8pt) node[anchor = south east] {$\xi_4(\tilde\omega)$};
	\draw[<-] (1,-1.1) arc (-40:-90:0.65cm);
	\draw[fill = black] (0.4,-1.33) circle (0.8pt);
	\draw[<-] (0.3,-1.33) arc (-100:-140:1cm); 
\end{scope}
\end{scope}
\end{tikzpicture}
\end{center}
For the analytic continuation we obtain that 
\begin{align*}
	R^0(\omega)  - R^0(- \omega) 
 	&=2 \pi i\left( \sum_{j \in \{ 3, 5\} } M_2 (\xi_j(\omega)) + M_1 (\xi_6(\omega)) \right) \\
 	&\quad - 2 \pi  i \left(  \sum_{j \in \{ 2, 4\} } M_2 (\xi_j(\omega_1)) 
 	-M_1 (\xi_1(\omega_1))  \right)
\end{align*}
where $  \scal{M_i(\xi)f_1}{f_2}  = \mu_i' (\xi)^{-1} \scal{P_i(\xi) f_1(\xi)}{f_2(\xi)}$. 
\end{example}
Now we consider the operator $A_\Sigma$ acting in $L_2(\Omega)$. For $\beta > 0$ we define the exponentially weighted spaces 
$$ H^{s}_\beta (\Omega ) := \{ u \in H^s_{\mathrm{loc}}(\Omega) :  \chi   (y) 
e^{\beta  y }  u(y , z ) \in H^s (\Omega)  \} , $$
where $\chi \in C^\infty(\RR)$ satisfies $\chi(y) = 0 $, $y \le R + 1 $ and $\chi(y) = 1 $ for $y > R+2$. 
Let  $\Lambda_1 \le \Lambda_2   \le \Lambda_3 \le \ldots$ denote  as  before the thresholds of  $A^0$.
\begin{theorem}\label{th:cont_resolvent_A_Sigma}
	Let $R_\Sigma (\omega) := (A_\Sigma - \omega^2)^{-1}$. Then the  mapping 
	$$ \omega \mapsto  R_\Sigma (\omega) \in \mathcal L(L_{2,-\beta}(\Omega) , H^1_{\beta} (\Omega)) $$
	may be continued to a multiple-valued meromorphic  function on the Riemann surface $\Psi_\beta$. The points $\pm \Lambda_j \in \RR$ are branching points of finite order. 
\end{theorem}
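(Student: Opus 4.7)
The plan is to construct a parametrix for $A_\Sigma - \omega^2$ by gluing the cylindrical resolvent $R^0(\omega)$, extended to $\Psi_\beta$ via Theorem~\ref{th:cont_resolvent_A^0}, to an interior resolvent on a bounded Lipschitz truncation of $\Omega$, and then to invert the compact error by the analytic Fredholm theorem on the sheets of $\Psi_\beta$.

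Fix smooth cutoffs $\chi_\pm \in C^\infty(\overline{\Omega})$ with $\chi_- + \chi_+ \equiv 1$, $\supp \chi_- \subseteq \{y < R+2\}$ and $\chi_+ \equiv 1$ on $\{y > R+3\}$, together with slightly larger cutoffs $\tilde{\chi}_\pm$ satisfying $\tilde{\chi}_\pm \chi_\pm = \chi_\pm$. Pick a bounded Lipschitz domain $\Omega_b \subseteq \Omega$ containing $\supp \tilde{\chi}_-$ and let $A_b$ be the self-adjoint realisation of $A(x,\nabla_x)$ on $\Omega_b$ with the mixed conditions inherited from $A_\Sigma$ on $\partial \Omega \cap \overline{\Omega_b}$ and Dirichlet on the artificial cut $\partial \Omega_b \setminus \partial \Omega$. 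Compactness of the embedding $H^1(\Omega_b) \hookrightarrow L_2(\Omega_b)$ makes $R_b(\omega) := (A_b - \omega^2)^{-1}$ meromorphic on $\CC$. Define
$$ E(\omega) := \tilde{\chi}_- R_b(\omega) \chi_- + \tilde{\chi}_+ R^0(\omega) \chi_+, $$
which is a meromorphic map $\Psi_\beta \to \mathcal{L}(L_{2,-\beta}(\Omega), H^1_\beta(\Omega))$ taking values in $D(A_\Sigma)$ (the boundary conditions are preserved by the gluing because the $\tilde{\chi}_\pm$ are smooth and vanish near the artificial cut of $\Omega_b$). A direct computation yields
$$ (A_\Sigma - \omega^2) E(\omega) = I + K(\omega), \qquad K(\omega) = [A,\tilde{\chi}_-] R_b(\omega) \chi_- + [A,\tilde{\chi}_+] R^0(\omega) \chi_+, $$
where the commutators are first-order differential operators whose coefficients are supported in the bounded overlap $\supp(\nabla \tilde{\chi}_\pm)$. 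By Rellich's embedding, $K(\omega)$ is compact on $L_{2,-\beta}(\Omega)$ and inherits the branching structure of $R^0$ at $\pm \Lambda_j$.

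For $\omega = it$ with $t$ sufficiently large one has $\|K(\omega)\|_{\mathcal{L}(L_{2,-\beta})} < 1$ by decay of the free resolvent, so $(I + K(\omega))^{-1}$ exists there. The analytic Fredholm theorem, applied to the compact meromorphic family $K(\omega)$ on each simply connected component of $\Psi_\beta$ minus the poles of $K$, then produces a meromorphic inverse, and
$$ R_\Sigma(\omega) = E(\omega) (I + K(\omega))^{-1} $$
gives the desired continuation, with poles of finite rank and branching only at $\pm \Lambda_j$. The main technical obstacle is to show that at a branching point $\pm \Lambda_j$ the Puiseux structure of $R^0$ is preserved under the Fredholm inversion, so that no transcendental (logarithmic) branching is introduced. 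This is handled by localising near $\Lambda_j$: one splits $K(\omega)$ into a finite-rank Puiseux singular part, read off from the residue formula \eqref{eq:residue}, plus a holomorphic remainder, and a Schur-complement (Grushin) reduction on the range of the singular part reduces the invertibility of $I + K(\omega)$ to the meromorphy of a finite-dimensional matrix whose entries are Puiseux series in $(\omega^2 - \Lambda_j^2)^{1/n}$, whose determinant is again a Puiseux series of finite order.
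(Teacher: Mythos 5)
Your proof is correct and follows the same strategy as the paper: a parametrix obtained by gluing the continued cylindrical resolvent $R^0(\omega)$ to an interior piece, a compact commutator error, invertibility for $\omega=it$ with $t$ large, and the meromorphic Fredholm theorem. The one genuine difference is the interior piece: you introduce an auxiliary self-adjoint operator $A_b$ on a bounded Lipschitz truncation with an artificial Dirichlet cut and use $R_b(\omega)$ at the running spectral parameter, whereas the paper avoids any auxiliary operator by taking $(1-\chi_2)R_\Sigma(\eta_0)(1-\chi_1)$ at a \emph{fixed} $\eta_0\in\mathbb H_+$, at the cost of an extra compact term $(\eta_0^2-\omega^2)(1-\chi_2)R_\Sigma(\eta_0)(1-\chi_1)$ in the error. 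Both variants are standard and equivalent in effect; yours requires checking that $\tilde\chi_\pm$ vanish near the artificial cut (which you do) and that the commutator terms gain a derivative via local elliptic regularity before invoking Rellich, exactly as in the paper's sketch. Your closing worry about transcendental branching at $\pm\Lambda_j$ is legitimate but is resolved more cheaply than by a Grushin reduction: near a branching point of order $k$ one passes to the local uniformizing variable $\zeta$ with $\omega=\Lambda_j-\zeta^k$, in which $K$ is single-valued and finitely meromorphic, so the meromorphic Fredholm theorem applied in $\zeta$ directly yields that $(I+K)^{-1}$, and hence $R_\Sigma$, has at worst a finite-order pole in $\zeta$; this is precisely the asserted finite-order branching, and it is how the paper's appeal to the meromorphic Fredholm theorem on $\Psi_\beta$ should be read.
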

\begin{proof}
	The proof  is well known for obstacle scattering in $\RR^d$ and applies also in the present case. We want to sketch only its basic steps. Let  $\chi_0, \chi_1, \chi_2 \in C^\infty(\RR)$ be chosen as above. Additionally we assume that we have
	$\chi_1 \chi_2 = \chi_1$, $\chi_0  \chi_1 = \chi_1$. We  consider the operators 
	$$ Q_1(\omega)  := \chi_0 R^0(\omega) \chi_1 , \qquad
		Q_2  := (1 - \chi_2)  R_{\Sigma}(\eta_0) (1 - \chi_1) , $$
	where $\eta_0 \in \mathbb{H}_+$ is fixed. Theorem \ref{th:cont_resolvent_A^0} implies that   $Q_1(\cdot)$ may be  continued to a multiple-valued holomorphic functions  on $\Psi_\beta$. A short calculation  shows that $Q_1(\omega)$ and $Q_2$ maps into $D(A_\Sigma)$ and that 
	$ ( A(x,\nabla_x) - \omega^2) (Q_1 (\omega)  + Q_2 ) = I +  M_1 (\omega) + M_2(\omega)$,
	where 
	\begin{align*}
		M_1(\omega) & =  [A(x,\nabla_x), (1 - \chi_0)] R^0(\omega) \chi_1  \\
		M_2(\omega) &= \Bigr( [A(x,\nabla_x), (1- \chi_2) ] + (\eta_0^2 - \omega^2)  (1 - \chi_2)  \Bigr) R_{\Sigma}(\eta_0) (1 - \chi_1)  .
	\end{align*}
	Note that $M_i(\omega) : L_{2,-\beta} (\Omega) \to L_{2,\beta}(\Omega), i =1,2$ are compact, which follows from  
	elliptic regularity. Finally, we have to show that $I + M_1(\omega) + M_2(\omega)$ is invertible for at least some $\omega \in \mathbb{H}_+$. Using the spectral theorem for the self-adjoint operator $A^0$ we have 
	$$ \| M_1 (\omega) \|_{L_{2,-\beta} \to L_{2,\beta}} \le C \|R^0(\omega)  \|_{L_2 \to H^1 } < 1/2  $$
	for sufficiently large imaginary part of  $\omega$. For a suitable choice of   $\omega, \eta_0$ we obtain likewise 
	$\| M_2 (\omega)\|_{L_{2,-\beta} \to L_{2,\beta}} < 1/2$, and thus, $I + M_1(\omega)+ M_2(\omega) $ is invertible.  Now the meromorphic Fredholm theorem implies that the mapping is invertible for all $\omega \in \Psi_\beta$ except for a discrete set.
\end{proof}
\subsection{Incoming and outgoing representations}
We want to describe the meromorphic  continuation of the resolvent also in terms of outgoing solutions. The distinction between ingoing and outgoing solution will be based on the limiting absorption principle. We refer also to \cite[Chapter 5]{NazarovPlam} for a different approach. For $\omega \in \CC$ and $\xi \in \CC$ we define 
$$ \mathfrak A_\omega(\xi) : H^2 (G) \to  \begin{array}{c} L_2(G) \\ \oplus\\ H^{3/2}(\partial G) \end{array}   , \qquad 
\mathfrak A_\omega(\xi ) = \begin{pmatrix} A^0(z, i \xi , \nabla_{z} ) - \omega^2 \\ \gamma_0 \end{pmatrix} , $$
where $\gamma_0$ denotes as before the trace operator. Then $\mathfrak A_\omega(\cdot) $ is a finitely meromorphic function of Fredholm type, cf.\ Section
\ref{section:finitely:meromorphic}. The points $\xi_0$ with  $\ker \mathfrak A_\omega(\xi_0) \neq \{0\}$ are called 
characteristic values. Note that the set $\Xi(\omega)$, which was introduced in the proof of Theorem \ref{th:cont_resolvent_A^0}, is the set of characteristic values of $\mathfrak A_\omega$. Let $\xi_0 \in  \Xi(\omega)$. A family of elements $u_0, \ldots, u_{k} \in H^2(G)$ is called a Jordan chain of length $k+1$ for $ \mathfrak A_\omega$, if and only if 
\begin{align*}
	0 = \sum_{q=0}^{j} \frac{1}{q!} \; \mathfrak A_\omega^{(q)} (\xi_0) u_{j-q} , \qquad j = 0, \ldots  , k . 
\end{align*}
For $u_0 \in \ker \mathfrak A_\omega(\xi_0)$ we denote by $ \mathrm{rank} \; u_0$ the maximal length of a Jordan chain associated with $u_0$. A basis $v_1, \ldots v_m$ of $\ker \mathfrak A_\omega(\xi_0)$ is called canonical  if $\mathrm{rank} (v_i) \ge \mathrm{rank} (v)$ for all $ v\in \mathrm{lin}(v_{i+1}, \ldots v_m)$. 
The number
$$  N(\xi_0) := \sum_{i=1}^m \mathrm{rank} \; v_i $$
is called the total multiplicity of the characteristic value and does not depend on on the choice of the canonical basis. Note that these notions coincide with the definitions given in Section \ref{section:finitely:meromorphic}.  For $\xi_0 \in \Xi(\omega)$ and a Jordan-chain $u_0, \ldots, u_{k} \in H^2(G)$ we define the functions $U_j : \RR \times G \to \CC$, 
\begin{equation}
	U_j (y,z) = e^{i \xi_0 y} \sum_{q=0}^j \frac{(it)^q}{q !  } u_{j-q} (z) , \qquad 
	j=0, \ldots , k . 
\end{equation} 
They satisfy $( A^0(z,\nabla_{(y,z)} ) - \omega^2) U_j = 0$ in $ \RR \times G$ and $\gamma_0 U_j = 0  \text{ on } \RR \times \partial G $. 
For $ \beta > 0$ and  $f \in L_{2,-\beta}(\Omega)$ we want to consider solutions   $u \in H^{1}_\beta(\Omega)$ of 
\begin{equation}\label{eq:outgoing_pde}
	(A( x,\nabla_x ) - \omega^2) u = f  \text{ in } \Omega , \qquad \gamma_0 u = 0  \text{ on } \partial \Omega \backslash \overline{\Sigma} ,\qquad
		\gamma_1 u = 0  \text{ on } \Sigma . 
\end{equation}
Here the normal derivative is defined in the weak sense, we refer also to the next section. Assume that $\beta$ is chosen
such that 
$\Xi(\omega) \cap \{ \xi \in \CC  : |\Im(\xi)| = \beta \} = \varnothing$
and let $n$ be the sum of the total multiplicities of all characteristic values in the strip 
$\{ \xi \in \CC : |\Im(\xi)|  \le  \beta\}$. Choosing for each characteristic value a canonical basis we obtain functions $U_j(\omega)$, $j=1, \ldots n$ as in \eqref{def:outgoing_solutions}. 
\begin{theorem}[see e.g.\ \cite{Mazya, NazarovPlam}]\label{th:asymptotics_expansion_solutions}
	For every solution $u \in H^{1}_\beta(\Omega)$ of \eqref{eq:outgoing_pde} there exist $c_j \in \CC$, $j =1, ,\ldots, n$, such that 
	\begin{align}\label{def:outgoing_solutions}
		u (y,z) -  \chi(y) \sum_{j=1}^{n} c_j U_{j} (\omega, y,z)  \in H^{1}_{-\beta} (\Omega) . 
	\end{align}
	Here $\chi \in C^\infty(\RR)$ is chosen such that $\chi=0$ on $(-\infty,R+1)$ and $\chi=1$ on $(R+ 2,\infty)$.
\end{theorem}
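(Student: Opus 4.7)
The plan is to reduce the problem to the cylindrical end, where the model operator $A^0$ acts, and then extract the asymptotic behaviour by a Fourier–contour-shift argument; the claimed decomposition is essentially the residue theorem applied to the partial Fourier transform in the longitudinal variable $y$, with the residues at the characteristic values in the strip $|\Im\xi|\le\beta$ producing exactly the functions $U_j$.

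First, I would localize. Pick $\chi_1\in C^\infty(\RR)$ with $\chi_1(y)=0$ for $y\le R+1$ and $\chi_1(y)=1$ for $y\ge R+2$, and set $v:=\chi_1 u$, extended by zero to $\RR\times G$. Since the coefficients of $A(x,\nabla_x)$ coincide with those of $A^0$ on the cylindrical end and $\gamma_0 u=0$ on $\RR_{>0}\times\partial G$, the function $v$ satisfies
\[
(A^0-\omega^2)v = g:=\chi_1 f + [A^0,\chi_1]u \quad\text{in } \RR\times G,\qquad \gamma_0 v=0 \text{ on } \RR\times\partial G.
\]
Here $\chi_1 f\in L_{2,-\beta}(\RR\times G)$ by assumption, while the commutator $[A^0,\chi_1]u$ is a first-order expression in $u$ with coefficients supported in $\{R+1\le y\le R+2\}$ and is therefore compactly supported in $y$. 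Consequently $g\in L_{2,-\beta}(\RR\times G)$, so its partial Fourier transform $\hat g(\xi,z)$ in $y$ extends holomorphically to the strip $|\Im\xi|<\beta$ with values in $L_2(G)$ and is square-integrable on both boundary lines $\RR\pm i\beta$.

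Next I would invert and shift the contour. Because $v\in H^1_\beta$ is supported in $\{y\ge R+1\}$, the function $e^{-\beta y}v$ lies in $H^1(\RR\times G)$, and hence $\hat v(\xi,z)$ is well-defined on $\RR-i\beta$; the equation gives $\hat v(\xi,z)=(A^0(\xi)-\omega^2)^{-1}\hat g(\xi,z)$, and Fourier inversion yields
\[
v(y,z) = \frac{1}{2\pi}\int_{\RR-i\beta}e^{iy\xi}(A^0(\xi)-\omega^2)^{-1}\hat g(\xi,z)\,\dd\xi .
\]
The uniform resolvent bound \eqref{eq:estimate_param_depend}, together with our choice of $\beta$ ensuring that $\Xi(\omega)\cap\{|\Im\xi|=\beta\}=\varnothing$ and that $\Xi(\omega)\cap\{|\Im\xi|\le\beta\}$ is finite (Lemma \ref{lemma:thresholds}), permits shifting the contour from $\RR-i\beta$ to $\RR+i\beta$. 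Cauchy's theorem then gives
\[
v(y,z) = \frac{1}{2\pi}\int_{\RR+i\beta}e^{iy\xi}(A^0(\xi)-\omega^2)^{-1}\hat g(\xi,z)\,\dd\xi \;+\; i\!\!\sum_{\xi_0\in\Xi(\omega),\,|\Im\xi_0|<\beta}\!\!\mathrm{Res}_{\xi_0}\Bigl(e^{iy\xi}(A^0(\xi)-\omega^2)^{-1}\hat g(\xi,\cdot)\Bigr).
\]
The integral on $\RR+i\beta$ defines an element of $H^1_{-\beta}(\RR\times G)$ by Plancherel with the weight $e^{\beta y}$.

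Finally I would identify the residues. Near each $\xi_0\in\Xi(\omega)$ the operator-valued function $\mathfrak A_\omega(\xi)^{-1}$ admits the Keldysh-type principal part whose numerator is built out of a canonical basis of $\ker\mathfrak A_\omega(\xi_0)$ and the associated Jordan chains, cf.\ Section \ref{section:finitely:meromorphic}. Expanding $e^{iy\xi}=e^{iy\xi_0}\sum_{q\ge 0}\frac{(iy)^q}{q!}(\xi-\xi_0)^q$ in the residue and combining the powers $(\xi-\xi_0)^q$ with the Jordan-chain coefficients produces precisely linear combinations of the functions
\[
U_j(\omega,y,z)=e^{i\xi_0 y}\sum_{q=0}^{j}\frac{(iy)^q}{q!}u_{j-q}(z),
\]
with coefficients $c_j\in\CC$ that depend linearly on $\hat g$. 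Summing over the finitely many $\xi_0$ in the strip, multiplying by the cutoff $\chi$ (which equals $1$ where $v=u$), and absorbing the compactly supported piece $u-v$ into the $H^1_{-\beta}$-remainder yields the claimed asymptotic expansion. The main technical step, and the main obstacle, is the contour shift: it requires uniform resolvent estimates for $(A^0(\xi)-\omega^2)^{-1}$ as $|\Re\xi|\to\infty$ along the lines $\Im\xi=\pm\beta$, which is precisely the content of \eqref{eq:estimate_param_depend} borrowed from \cite{Mazya,NazarovPlam}; the subsequent matching of residues with the $U_j$ is an essentially algebraic computation using the Jordan-chain structure of $\mathfrak A_\omega$ at each characteristic value.
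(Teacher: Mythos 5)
Your argument is correct and is precisely the standard proof from the cited references \cite{Mazya, NazarovPlam} (the paper itself gives no proof, only the citation): localize to the cylindrical end, apply the partial Fourier transform in $y$, shift the contour from $\Im\xi=-\beta$ to $\Im\xi=+\beta$ using the uniform resolvent estimate \eqref{eq:estimate_param_depend}, and identify the residues at the characteristic values with the Jordan-chain functions $U_j$. No gaps beyond the acknowledged technicalities (justifying the contour shift for an $L_2$-integrand and the Keldysh-type residue computation), both of which are standard.
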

The next step is to define the notion of an outgoing solution such that $R_{\Sigma}(\omega) f$ will give the unique outgoing solution of the boundary value problem \eqref{eq:outgoing_pde}. For  $\omega \in \mathbb{H}_+$ we denote
\begin{align*}
 	  \Xi_{\beta,\pm} (\omega) &:= \{ \xi \in \Xi_\beta  (\omega) : \pm \Im (\xi ) > 0 \}  
\end{align*}
and denote by $n_{\pm}$ the corresponding total multiplicities. We denote by  $U_{j,\pm}(\omega)$, $j=1, \ldots ,n_{\pm}$ the functions in \eqref{def:outgoing_solutions}  corresponding to  characteristic values in $ \Xi_{\beta,\pm} (\omega)$. Then $U_{j,+}(\omega)(y,z)$ is  exponentially decreasing as $y \to \infty$. 
\begin{defi}
	Let $\omega \in \mathbb{H}_+$. A solution $u \in H^{1}_\beta(\Omega)$ of \eqref{eq:outgoing_pde}  is outgoing if and only if 
	there exist $c_j \in \CC$, $j =1, ,\ldots, n_{+}$, such that 
	\begin{align}
		u (y,z) -  \chi(y) \sum_{j=1}^{n_+} c_j U_{j,+} (\omega, y,z)  \in H^{1}_{-\beta} (\Omega) . 
	\end{align}
\end{defi}
Now let $\omega_0 \in \RR \backslash \{ \Lambda_i \}$ and assume that $\beta > 0$ and $\varepsilon > 0$ are sufficiently small. Using the results of the
previous section we obtain that there exist functions $\xi_{j, \pm}(\cdot)$, which are analytic along any path in 
$B(\omega_0, \varepsilon) \backslash \{ \omega_0\}$,  such that 
$$ \Xi_{\beta,\pm }(\omega) = \{ \xi_{j, \pm }(\omega) : j =1, \ldots, n_{\pm} \}  \qquad \text{for } \omega \in B(\omega_0, \varepsilon)\cap \mathbb{H}_+ .  $$
Let $\omega_1 \in B(\omega_0, \varepsilon) \backslash \{ \omega_0\} $. We define 
$$ \Xi_{\beta,+}(\omega_1) = \{ \xi_{j,+}(\omega_1) : j=1, \ldots, n_{+} \} ,$$
and $U_{j,+}(\omega_1)$ by analytic continuation. We also assume that the $\xi_{j,\pm}(\cdot)$ and the functions $U_{j,\pm} (\cdot)$ may be continued analytically to $\Psi_\beta$, i.e., the maximal Riemannian manifold such that $\omega \mapsto R_0(\omega) $ is analytic. 
\begin{defi}
	Let $\omega_1 \in \Psi_\beta$. A solution  $u \in H^{1}_\beta(\Omega)$ of \eqref{eq:outgoing_pde}  is called outgoing if and only if 
	there exist $c_j \in \CC$, $j =1, ,\ldots, n_{+}$, such that 
	\begin{align}\label{eq:asymptotic:outgoing}
		u (y,z) -  \chi(y) \cdot \sum_{j=1}^{n_{+}} c_j U_{j,+} (\omega_1, y,z)  \in H^{1}_{-\beta} (\Omega) ,
	\end{align}
 	where $\chi$ is chosen as above. 
\end{defi}
\begin{theorem}\label{th:cont_outgoing}
	Let $\beta > 0$,  $\varepsilon > 0$ be sufficiently small. Assume that $\omega_1 \in B(\omega_0, \varepsilon) \backslash \{\omega_0\}$ is not a pole of $R_\Sigma(\cdot)$ and let $f \in L_{2,-\beta}(\Omega)$. Then $u := R_\Sigma(\omega_1) f \in H^1_{\beta}(\Omega)$ is 
	the unique outgoing solution of \eqref{eq:outgoing_pde}.
\end{theorem}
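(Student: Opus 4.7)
The plan is to establish the outgoing expansion of $R_\Sigma(\omega)f$ first for $\omega\in\mathbb{H}_+$, where one can read it off directly from the contour-shift formula, then propagate it to $\Psi_\beta$ by analytic continuation, and finally obtain uniqueness by relating nontrivial outgoing homogeneous solutions to poles of $R_\Sigma$ via the Fredholm parametrix.

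\emph{Outgoing expansion for $\omega\in\mathbb{H}_+$.} Specialise first to $\omega\in\mathbb{H}_+\cap B(\omega_0,\varepsilon)$. The identity \eqref{eq:shift_path} represents $R^0(\omega)f$ as the sum of an integral along $\RR+i\beta$ — which by the contour shift lies in $H^1_{-\beta}(\RR\times G)$ — and finitely many residues at the characteristic values $\xi_{j,+}(\omega)\in\Xi_{\beta,+}(\omega)$. Unfolding each residue via the local factorisation $\mu_{k_\alpha}(\xi)-\omega_0^2=G_\alpha(\xi)^{n_\alpha}$ and the Jordan-chain structure of $\mathfrak A_\omega$ at $\xi_{\alpha,\delta}(\omega)$, the residual contribution reproduces exactly a linear combination of the building blocks $U_{j,+}(\omega,y,z)=e^{i\xi_{j,+}(\omega)y}\sum_{q}\tfrac{(iy)^{q}}{q!}u_{\ast-q}(z)$. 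Hence $R^0(\omega)f$ is outgoing. To lift this to $R_\Sigma$, I would invoke the parametrix $R_\Sigma(\omega)=(Q_1(\omega)+Q_2)(I+M_1(\omega)+M_2(\omega))^{-1}$ from the proof of Theorem \ref{th:cont_resolvent_A_Sigma}: $Q_2$ has compact $y$-support and contributes only to $H^1_{-\beta}(\Omega)$, while on the cylindrical end $Q_1(\omega)=\chi_0 R^0(\omega)\chi_1$ reduces to $R^0(\omega)$ acting on an $L_{2,-\beta}$-function and inherits the outgoing asymptotics.

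\emph{Analytic continuation to $\Psi_\beta$.} By Theorem \ref{th:cont_resolvent_A_Sigma}, the map $\omega\mapsto R_\Sigma(\omega)f\in H^1_\beta(\Omega)$ is meromorphic on $\Psi_\beta$. The characteristic values $\xi_{j,+}(\omega)$ and the building blocks $U_{j,+}(\omega,\cdot)$ extend analytically to $\Psi_\beta$ by \eqref{eq:def_xi_alpha_delta} and the construction preceding the theorem; by Theorem \ref{th:asymptotics_expansion_solutions} the map $u\mapsto(c_1,\ldots,c_n)$ is a well-defined linear functional on $H^1_\beta(\Omega)/H^1_{-\beta}(\Omega)$, so the coefficients $c_j(\omega)$ extracted from $R_\Sigma(\omega)f$ are themselves meromorphic on $\Psi_\beta$. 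The identity
$$R_\Sigma(\omega)f(y,z)-\chi(y)\sum_{j=1}^{n_+}c_j(\omega)\,U_{j,+}(\omega,y,z)\in H^1_{-\beta}(\Omega),$$
and the vanishing $c_{n_++1}(\omega)=\cdots=c_n(\omega)=0$, both established on $\mathbb{H}_+\cap B(\omega_0,\varepsilon)$, therefore persist by analytic continuation to every regular $\omega_1\in\Psi_\beta$.

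\emph{Uniqueness and main obstacle.} If $v\in H^1_\beta(\Omega)$ is an outgoing solution of the homogeneous problem at $\omega_1$, I would construct $\phi\in L_{2,-\beta}(\Omega)$ with $v=(Q_1(\omega_1)+Q_2)\phi$ by adjusting $v$ with an $H^1_{-\beta}$-correction that matches its cylindrical outgoing tail to $Q_1(\omega_1)$ applied to an $L_{2,-\beta}$-function. The parametrix identity $(A(x,\nabla_x)-\omega_1^2)(Q_1+Q_2)=I+M_1+M_2$ then forces $\phi\in\ker(I+M_1(\omega_1)+M_2(\omega_1))$, so by the meromorphic Fredholm theorem $\omega_1$ would be a pole of $R_\Sigma$, contradicting the hypothesis and giving $v=0$. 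The main obstacle lies in the first step: matching the residues in \eqref{eq:shift_path} to the Jordan-chain functions $U_{j,+}$ at higher-order branching points ($n_\alpha\ge 2$), where one has to expand $(A^0(\xi)-\omega^2)^{-1}$ in Puiseux coordinates and recognise the polynomial-in-$y$ factors $(iy)^q/q!$ as arising from successive derivatives of $e^{i\xi y}$ along the Jordan chain.
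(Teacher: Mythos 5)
Your treatment of existence and of the outgoing expansion follows the paper's route: the paper likewise reads off the outgoing structure of $R^0(\omega)$ from the contour-shift formula \eqref{eq:shift_path} and \eqref{eq:residue}, and transfers it to $R_\Sigma$ through the parametrix identity $R_\Sigma(\omega) = (Q_1(\omega)+Q_2)(I+M_1(\omega)+M_2(\omega))^{-1}$. One remark on your continuation step: membership of the remainder in $H^1_{-\beta}(\Omega)$ is not a closed condition inside $H^1_{\beta}(\Omega)$, so it does not automatically ``persist by analytic continuation''; the clean way out (and the paper's) is to continue the parametrix decomposition itself, which exhibits the remainder $R_\Sigma(\omega)f - \chi_1 R^0(\omega)\chi_0(I+M_1(\omega)+M_2(\omega))^{-1}f$ as an analytic function with values in the decaying space for all $\omega$ on $\Psi_\beta$, rather than continuing the coefficients $c_j(\omega)$ and the remainder separately.

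The genuine gap is in your uniqueness argument, which departs from the paper and does not close. First, you assert that an outgoing homogeneous solution $v$ can be written as $(Q_1(\omega_1)+Q_2)\phi$ with $\phi\in L_{2,-\beta}(\Omega)$ ``by adjusting $v$ with an $H^1_{-\beta}$-correction''; but $Q_1+Q_2$ is only a right parametrix, and there is no reason an arbitrary outgoing function should lie in its range over $L_{2,-\beta}$ --- you would have to invert the parametrix, which is not available. Second, even granting $\phi\in\ker(I+M_1(\omega_1)+M_2(\omega_1))$, the inference that $\omega_1$ is then a pole of $R_\Sigma$ is unjustified: $R_\Sigma=(Q_1+Q_2)(I+M_1+M_2)^{-1}$, and $Q_1+Q_2$ may annihilate the principal part of $(I+M_1+M_2)^{-1}$, so a pole of the inverse of the Fredholm factor need not produce a pole of $R_\Sigma$. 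This implication is essentially the content of Theorem \ref{th:equivalence_resonances}, which the paper proves separately (following Dyatlov--Zworski) and which is not available here without extra work. The paper's uniqueness argument avoids all of this: for an outgoing $u = \chi\sum_j c_j U_{j,+}(\omega_1,\cdot)+\tilde u$ one proves $R_\Sigma(\omega)(A(x,\nabla_x)-\omega^2)\,\chi U_{j,+}(\omega)=\chi U_{j,+}(\omega)$ and $R_\Sigma(\omega)(A(x,\nabla_x)-\omega^2)\tilde u=\tilde u$ for $\omega\in\mathbb H_+$ (where the modes decay exponentially and these are genuine resolvent identities), continues both sides analytically to $\omega_1$, and sums to get $R_\Sigma(\omega_1)(A(x,\nabla_x)-\omega_1^2)u=u$; applied to any outgoing solution with data $f$ this gives $u=R_\Sigma(\omega_1)f$ at once. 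If you want to keep your parametrix framework, the fix for your second step is the continued identity $Q_1(\omega)+Q_2=R_\Sigma(\omega)(I+M_1(\omega)+M_2(\omega))$, which yields $v=(Q_1(\omega_1)+Q_2)\phi=R_\Sigma(\omega_1)\,0=0$ directly, but the first step (realising $v$ in the range of the parametrix) would still need an actual construction.
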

The proof is based on the ideas in one-dimensional scattering theory, see e.g. \cite[Theorem 2.3]{DyatlovZworski}.
\begin{proof} 
	By analytic continuation we have $( A - \omega_1^2 ) R_\Sigma(\omega_1) f = f$,  and in the same way we obtain 
	$ \gamma_0 R_{\Sigma}(\omega_1) f = 0$ on $\partial \Omega \backslash \overline{\Sigma}$ as well as $\gamma_1 R_{\Sigma}(\omega_1) f = 0$ on $\Sigma$. Thus, $u := R_{\Sigma}f$ is a solution of the boundary problem. It remains to show that the solution is outgoing and unique. 
	Choosing $\chi_0 , \chi_1 \in C^\infty (\RR)$ and $M_i(\omega_1)$, $i=1,2$, as in the proof of Theorem \ref{th:cont_resolvent_A_Sigma} we 
	obtain that 
	$$ R_\Sigma(\omega_1)f - \chi_1  R^0(\omega_1) \chi_0   \;  (I + M_1(\omega_1)+ M_2(\omega_1))^{-1}f  \in H^1_{-\beta_1} (\Omega)  ,  $$
	for all $\beta_1> 0$. 
	Using  \eqref{eq:shift_path} and \eqref{eq:residue} it easily follows that the range of $\chi_1  R^0(\omega) \chi_0$ consists of outgoing functions. Thus, $R_\Sigma(\omega_1)f$ has to be outgoing. To show uniqueness it is sufficient to prove 
	$R_{\Sigma}(\omega) (A(x,\nabla_x) - \omega_1^2) u = u$ for all outgoing functions $u\in H^1_{\beta}(\Omega)$. 
	Let 
	$$ u (y,z) =  \chi(y) \sum_{j=1}^{n_+} c_{j} U_{j,+}(\omega_1,y,z) + \tilde u(y,z)  , \qquad \text{with } \tilde u \in H^{1}_{-\beta}(\Omega). $$
	We define  $\tilde f(\omega) = (A(x,\nabla_x) - \omega^2) \tilde u$ and $f_j(\omega) = (A(x,\nabla_x) - \omega^2)  \chi  U_{j,+}(\omega) .$
	Then $R_{\Sigma} (\omega) \tilde f(\omega) = \tilde u$ and
	$R_{\Sigma} (\omega) f_j(\omega) = \chi_j U_{j,+}(\omega)$ for all $\omega$ by  analytic continuation. This implies the assertion. 
\end{proof}
Note that  an analogous assertion as in Theorem \ref{th:cont_outgoing} will hold true for all $\omega \in \Psi_\beta$.  
Let $\beta > 0$. A value $\lambda_0 \in \Psi_\beta$ is called resonance if $R_{\Sigma}(\cdot)$ has a pole in $\lambda_0$. The order of the resonance is given by 
	$$ \dim \ran \; \frac{1}{2\pi i} \int_{|\omega - \lambda_0| = \varepsilon } R_\Sigma(\omega) \; 
	\dd \omega . $$

\begin{theorem}\label{th:equivalence_resonances}
Let $\lambda_0\in \Psi_\beta$. Then the following assertions are equivalent:
\begin{enumerate}
	\item $\lambda_0$ is a resonance. 
	\item There exists a non-trivial outgoing solution of \eqref{eq:outgoing_pde}.
\end{enumerate}
\end{theorem}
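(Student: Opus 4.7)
The plan is to prove (2)$\Rightarrow$(1) by contraposition using the uniqueness statement of Theorem \ref{th:cont_outgoing} (in its extension to all non-polar points of $\Psi_\beta$), and to prove (1)$\Rightarrow$(2) by extracting the leading Laurent coefficient of $R_\Sigma(\cdot)$ at $\lambda_0$ and verifying that it produces a non-trivial outgoing solution of the homogeneous problem.

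For the direction (2)$\Rightarrow$(1), I would argue by contrapositive. If $\lambda_0$ is not a pole of $R_\Sigma(\cdot)$, then by the remark following Theorem \ref{th:cont_outgoing} the operator $R_\Sigma(\lambda_0) : L_{2,-\beta}(\Omega) \to H^1_\beta(\Omega)$ is well-defined and, applied to any right-hand side $f$, produces the unique outgoing solution of \eqref{eq:outgoing_pde}. Specialising to $f=0$ forces the only outgoing solution of the homogeneous problem to be $0$, so no non-trivial outgoing solution exists.

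For the direction (1)$\Rightarrow$(2), I would write the Laurent expansion
\[
  R_\Sigma(\omega) = \sum_{k=-N}^{\infty} (\omega - \lambda_0)^k B_k , \qquad B_k \in \mathcal L(L_{2,-\beta}(\Omega); H^1_\beta(\Omega)) , \quad B_{-N}\neq 0 ,
\]
with $N\ge 1$. Pick $f\in L_{2,-\beta}(\Omega)$ with $u_0:=B_{-N}f\neq 0$. Multiplying the identity $(A(x,\nabla_x)-\omega^2)R_\Sigma(\omega)f=f$ by $(\omega-\lambda_0)^N$ and letting $\omega\to\lambda_0$, the right-hand side vanishes (because $N\ge 1$) while matching the leading coefficient on the left yields $(A(x,\nabla_x)-\lambda_0^2)u_0=0$. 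The analogous argument applied to $\gamma_0 R_\Sigma(\omega)f=0$ on $\partial\Omega\setminus\overline{\Sigma}$ and $\gamma_1 R_\Sigma(\omega)f=0$ on $\Sigma$ yields the boundary conditions required in \eqref{eq:outgoing_pde}.

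The genuinely delicate step is to verify that $u_0$ is outgoing at $\lambda_0$. By Theorem \ref{th:cont_outgoing}, for every $\omega$ in a punctured neighbourhood of $\lambda_0$ the function $R_\Sigma(\omega)f$ admits a representation
\[
  R_\Sigma(\omega)f - \chi(y)\sum_{j=1}^{n_+} c_j(\omega)U_{j,+}(\omega,y,z) \in H^1_{-\beta}(\Omega) .
\]
Since the modes $U_{j,+}(\omega,\cdot,\cdot)$ exhibit linearly independent exponential profiles as $y\to\infty$, the $c_j(\omega)$ can be recovered from the restriction of $R_\Sigma(\omega)f$ to a cross-section $\{y_0\}\times G$ for large $y_0$ by solving a linear system whose matrix depends holomorphically on $\omega$; together with the analyticity of the $U_{j,+}(\cdot)$ this forces $\omega\mapsto c_j(\omega)$ to be meromorphic on $\Psi_\beta$ with a pole of order at most $N$ at $\lambda_0$. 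Multiplying the displayed expansion by $(\omega-\lambda_0)^N$ and passing to the limit $\omega\to\lambda_0$ then produces an outgoing representation for $u_0$ with coefficients $\tilde c_j = \lim_{\omega\to\lambda_0}(\omega-\lambda_0)^N c_j(\omega)$. I expect the verification of this meromorphic dependence, and the justification of the limit in the weighted asymptotic representation, to be the main technical obstacle; the algebraic identities for the PDE and the boundary traces are routine Laurent-coefficient bookkeeping.
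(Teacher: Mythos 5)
Your argument is correct and is essentially the standard one the paper invokes by citing \cite[Theorem 2.4]{DyatlovZworski}: uniqueness of the outgoing solution gives (2)$\Rightarrow$(1) by contraposition, and the leading Laurent coefficient of $R_\Sigma$ yields the resonant state for (1)$\Rightarrow$(2). For the one step you flag as delicate, note that the meromorphy of the coefficients $c_j(\omega)$ is read off more directly from the parametrix identity in the proof of Theorem \ref{th:cont_outgoing} together with the explicit residue formulas \eqref{eq:shift_path}--\eqref{eq:residue}, rather than by recovering them from a single cross-section (where the $H^1_{-\beta}$ remainder also contributes and one would have to pass to the limit $y_0\to\infty$).
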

The proof follows as in  \cite[Theorem 2.4]{DyatlovZworski}. A threshold $\Lambda_i \in \RR$ will be called a resonance of $A_\Sigma$ if the operator $\zeta \mapsto R_\Sigma(\Lambda_i - \zeta^k)$ has a pole at $\zeta =0$, where $k$ is the order of the branching point.
\section{The Dirichlet-to-Neumann operator}
In this section we introduce the notion of the Dirichlet-to-Neumann operator. We start by considering for $\omega \in \mathbb{H}_+$ the boundary value  problem 
\begin{align}
	\label{eq:Poisson_problem} ( A(x,\nabla_x) - \omega^2)  u  &=  f \text{ in } \Omega ,\qquad 
	\gamma_0 u  = g  \text{ on }  \partial \Omega  ,
\end{align} 
where $u \in H^1(\Omega)$, $f \in L_2(\Omega)$ and $g \in H^{1/2}(\partial \Omega)$. 
\begin{lemma}\label{lemma:Poisson_problem}
	 Let $\omega \in \mathbb{H}_+$. Then the boundary value problem \eqref{eq:Poisson_problem} is uniquely solvable and 
	there exists $c= c(\omega)$ independent of $f$ and $g$ such  that $\| u\|_{H^1(\Omega)} \le c(\omega) ( \| g\|_{H^{1/2}(\partial \Omega)} + 
	 \|f\|_{L_2(\Omega)} )$. 
\end{lemma}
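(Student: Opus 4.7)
The strategy is to reduce \eqref{eq:Poisson_problem} to a homogeneous Dirichlet problem for $A_\varnothing$ and then exploit that $\omega^2$ avoids the spectrum of $A_\varnothing$, which sits in $[0, \infty)$.

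First, since $\partial \Omega$ is Lipschitz, the trace $\gamma_0 : H^1(\Omega) \to H^{1/2}(\partial \Omega)$ admits a bounded right-inverse. Pick $u_g \in H^1(\Omega)$ with $\gamma_0 u_g = g$ and $\| u_g \|_{H^1(\Omega)} \le C \| g \|_{H^{1/2}(\partial \Omega)}$, and set $v := u - u_g$. Then \eqref{eq:Poisson_problem} is equivalent to finding $v \in H^1_0(\Omega)$ with
\begin{equation*}
\mathfrak a [v,\phi] - \omega^2 \scal{v}{\phi}_{L_2(\Omega)} = \langle \tilde f, \phi \rangle , \qquad \phi \in H^1_0(\Omega),
\end{equation*}
where $\tilde f \in H^{-1}(\Omega)$ is defined by $\langle \tilde f, \phi \rangle := \scal{f}{\phi}_{L_2} + \omega^2 \scal{u_g}{\phi}_{L_2} - \mathfrak a[u_g, \phi]$. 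The boundedness of $\mathbf a$ and $a$ gives the estimate $\|\tilde f\|_{H^{-1}} \le C(1+|\omega|^2)(\|f\|_{L_2} + \|g\|_{H^{1/2}})$.

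Second, since $\mathbf a$ is uniformly strongly elliptic and $a \ge 0$, the form $\mathfrak a$ is nonnegative, so the self-adjoint operator $A_\varnothing$ satisfies $\sigma(A_\varnothing) \subseteq [0, \infty)$. Writing $\omega = \alpha + i\beta$ with $\beta > 0$ gives $\omega^2 = (\alpha^2 - \beta^2) + 2i\alpha\beta$, which is either nonreal or strictly negative; in either case $\omega^2 \in \rho(A_\varnothing)$. It remains to show that the sesquilinear form $b_\omega[v,\phi] := \mathfrak a[v,\phi] - \omega^2 \scal{v}{\phi}_{L_2}$ defines an isomorphism $H^1_0(\Omega) \to H^{-1}(\Omega)$. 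Injectivity is immediate: testing $b_\omega[v,v] = 0$ and separating real and imaginary parts, if $\Im(\omega^2) \ne 0$ we force $v = 0$ directly, while if $\omega^2 = -\beta^2 < 0$ then $\mathfrak a[v,v] + \beta^2 \|v\|^2 = 0$ again forces $v = 0$. Surjectivity with the bound $\|v\|_{H^1} \le c(\omega) \|\tilde f\|_{H^{-1}}$ then follows from the standard correspondence between the nonnegative closed form $\mathfrak a$ and its self-adjoint operator: for $\omega^2 \in \rho(A_\varnothing)$, $(A_\varnothing - \omega^2)^{-1} : L_2(\Omega) \to D(A_\varnothing) \hookrightarrow H^1_0(\Omega)$ is bounded, and the form extension produces a bounded extension $(A_\varnothing - \omega^2)^{-1} : H^{-1}(\Omega) \to H^1_0(\Omega)$. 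Setting $u := u_g + (A_\varnothing - \omega^2)^{-1} \tilde f$ gives the unique solution with the asserted estimate.

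The delicate step is the last extension of the resolvent to $H^{-1} \to H^1_0$. On a bounded domain this is routinely obtained via Rellich and the Fredholm alternative, but the cylindrical end of $\Omega$ blocks compactness of the embedding $H^1_0 \hookrightarrow L_2$. I would therefore either invoke abstract form-method results directly (Kato, Ouhabaz), or first establish the isomorphism for $\omega = i\tau_0$ with $\tau_0$ large — where $b_{i\tau_0}$ is coercive on $H^1_0$ so Lax--Milgram applies — and then transfer to general $\omega \in \mathbb H_+$ via the first resolvent identity, bounding the error using $\omega^2 \in \rho(A_\varnothing)$.
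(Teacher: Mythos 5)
Your argument is correct, but it is genuinely different from what the paper does: the paper offers no proof at all and simply cites Kozlov--Maz'ya--Rossmann [Theorem 5.5.2], i.e.\ it delegates the solvability to the theory of elliptic boundary value problems in cylindrical domains (proved there for smooth cylinders, so strictly speaking requiring an adaptation to the present locally Lipschitz geometry). You instead give a self-contained variational proof: lift $g$ by a bounded right inverse of the trace, reduce to the homogeneous Dirichlet problem for $b_\omega[v,\phi]=\mathfrak a[v,\phi]-\omega^2\langle v,\phi\rangle$, and use that $\omega\in\mathbb H_+$ forces $\omega^2\in\CC\setminus[0,\infty)\subseteq\rho(A_\varnothing)$. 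This is more elementary, uses only nonnegativity and uniform strong ellipticity of the form, and applies directly to the Lipschitz setting; what it does not give is the additional weighted/higher-regularity information that the cited reference provides and that the paper exploits elsewhere. One remark on the step you flag as delicate: it is not. The computation you already perform for injectivity --- testing $b_\omega[v,v]=\langle\tilde f,v\rangle$ and separating real and imaginary parts --- yields the quantitative bound $\|v\|_{H^1}\le c(\omega)\|\tilde f\|_{H^{-1}}$ for all $v\in H^1_0(\Omega)$ with $B_\omega v=\tilde f$ (first $\|v\|_{L_2}^2\le|\Im(\omega^2)|^{-1}\|\tilde f\|_{H^{-1}}\|v\|_{H^1}$ when $\Im(\omega^2)\neq0$, then the real part plus ellipticity controls $\|\nabla v\|_{L_2}$; the case $\omega^2<0$ is coercive outright). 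Surjectivity then follows by approximating $\tilde f$ in $H^{-1}$ by $L_2$ data, where the resolvent applies, and passing to the limit with this uniform estimate --- no compactness, Fredholm alternative, or abstract form machinery is needed, which is exactly why the cylindrical end causes no difficulty here.
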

For the proof we refer e.g.\ to \cite[Theorem 5.5.2]{Mazya}, where the assertion was proved for cylindrical domains with smooth boundary. Putting  $f=0$ we denote by $ u= K_\omega g$ the unique solution of the Poisson problem 
\begin{equation}
	\label{eq:Poisson}  (A(x,\nabla_x) - \omega^2) u=  0  \text{ in } \Omega , \qquad \gamma_0 u  = g  \text{ on }  \partial \Omega .
\end{equation}
Lemma \ref{lemma:Poisson_problem} implies that we have  $K_\omega :  H^{1/2} (\partial \Omega)  \to H^1(\Omega)$. Now let $u \in H^1(\Omega)$ with $A(x,\nabla_x) u  = f\in L_{2}(\Omega)$. The weak conormal derivative $\gamma_1 u \in  H^{-1/2}(\partial \Omega)$ will be defined  by Green's formula, i.e., we have
$$  \scal{\gamma_1 u }{\gamma_0 v} =  \mathfrak a [u,v] - 
	\scal{f}{v}, \quad \text{for all } v \in H^{1}(\Omega).   $$
Here $\mathfrak a[u,v]$ is defined as in \eqref{def:sesq_form}. Then the  Dirichlet-to-Neumann operator  is given by  
\begin{equation}
	D_\omega : H^{1/2}(\partial \Omega)
		\to H^{-1/2}(\partial \Omega) , \qquad D_\omega g := \gamma_1  K_\omega g . 
\end{equation}
As a next step we want to show that the operators $K_\omega$ and $D_\omega$ admit meromorphic extension to the lower half-plane. Let 
$\chi \in C^\infty(\RR)$, $\chi = 0$ on $(-\infty,R + 1)$ and $\chi = 1$ on $(R + 2, \infty)$. For $\beta \in \RR$ and $s \in \RR$ we define 
$$ H^{s}_\beta (\partial \Omega) = \left\{ g \in H^s_{\mathrm{loc}} (\partial \Omega) : g(y,z)  \chi(y) e^{- \beta z}  \in H^s(\partial \Omega)   
	\right\} . $$
Then as above we may define for  $u \in H^1_\beta(\Omega)$ with $A(x,\nabla_x) u  = f\in L_{2,\beta}(\Omega)$ a weak conormal derivative. 
\begin{theorem}\label{th:cont_K_D}
The mappings  
$$ \omega \mapsto K_\omega  \in  \mathcal L(H^{1/2}_{-\beta} (\partial \Omega) ; H^{1}_{\beta}( \Omega)) , \quad 
	\omega \mapsto  D_\omega \in \mathcal L(H^{1/2}_{-\beta} (\partial \Omega) ; H^{-1/2}_{\beta}(\partial \Omega))  $$
extend meromorphically to the Riemann surface $\Psi_\beta$. If $\omega$ is not a resonance  of $A_\varnothing$ then $K_\omega g$ is the unique outgoing solution of the Poisson  problem 
\begin{equation*}
	(A(x,\nabla_x) - \omega^2) u=  0  \text{ in } \Omega , \qquad \gamma_0 u  = g  \text{ on }  \partial \Omega .
\end{equation*}
\end{theorem}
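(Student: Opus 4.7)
The plan is to reduce the meromorphic continuation of $K_\omega$ to that of the Dirichlet resolvent $R_\varnothing(\omega) = (A_\varnothing - \omega^2)^{-1}$, whose continuation to $\Psi_\beta$ was already established in Theorem \ref{th:cont_resolvent_A_Sigma} with $\Sigma = \varnothing$. Once the identity $D_\omega = \gamma_1 K_\omega$ is interpreted through the weak conormal trace, the continuation of $D_\omega$ is inherited automatically.

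The first step is to construct $K_{\omega_0}$ at a convenient reference point $\omega_0 \in i\RR_{>0}$ with sufficiently large imaginary part. For such $\omega_0$ the sesquilinear form $\mathfrak a[\cdot,\cdot] - \omega_0^2 \scal{\cdot}{\cdot}$ is strongly coercive on $H^1(\Omega)$, and a perturbation argument conjugating by an exponential weight preserves coercivity on $H^1_{-\beta}(\Omega)$ provided $\beta > 0$ is sufficiently small. For $g \in H^{1/2}_{-\beta}(\partial \Omega)$ I would choose a bounded lift $\tilde g \in H^1_{-\beta}(\Omega)$ of $g$ and then solve for $w = K_{\omega_0} g - \tilde g$ the variational problem with zero Dirichlet trace by Lax-Milgram in the weighted space. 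This produces $K_{\omega_0}$ as a bounded operator from $H^{1/2}_{-\beta}(\partial \Omega)$ into $H^1_{-\beta}(\Omega)$.

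With this reference operator in hand, the continuation for general $\omega$ follows from the identity
\begin{equation*}
	K_\omega g = K_{\omega_0} g + (\omega^2 - \omega_0^2)\, R_\varnothing(\omega) K_{\omega_0} g,
\end{equation*}
which is verified directly: applying $A(x, \nabla_x) - \omega^2$ to the right-hand side yields zero, and the boundary trace reduces to $g$ since $\gamma_0 R_\varnothing(\omega) K_{\omega_0} g = 0$. Because $K_{\omega_0} g \in H^1_{-\beta}(\Omega) \subset L_{2,-\beta}(\Omega)$, Theorem \ref{th:cont_resolvent_A_Sigma} provides a multiple-valued meromorphic continuation of the second summand on $\Psi_\beta$ with values in $H^1_\beta(\Omega)$, whose poles are exactly the resonances of $A_\varnothing$. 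Since $A K_\omega g = \omega^2 K_\omega g$ is locally $L_2$, the weak conormal derivative $D_\omega g := \gamma_1 K_\omega g$ is defined via Green's formula and depends meromorphically on $\omega$ with values in $H^{-1/2}_\beta(\partial \Omega)$.

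For the outgoing property, $R_\varnothing(\omega) K_{\omega_0} g$ is outgoing by the analogue of Theorem \ref{th:cont_outgoing} for $A_\varnothing$, while $K_{\omega_0} g \in H^1_{-\beta}(\Omega)$ is trivially outgoing (all coefficients in \eqref{eq:asymptotic:outgoing} vanish). Uniqueness then follows from Theorem \ref{th:equivalence_resonances}: a second outgoing solution with boundary trace $g$ would differ from $K_\omega g$ by a non-trivial outgoing solution of the homogeneous Dirichlet problem, which cannot exist unless $\omega$ is a resonance of $A_\varnothing$. The main technical obstacle lies in the first step, namely the construction of $K_{\omega_0}$ in the weighted space: propagating the exponential weight through the variational formulation along the cylindrical end, and matching it to the boundary lift, requires a careful conjugation argument.
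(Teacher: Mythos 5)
Your argument is correct, but it takes a different route from the one the paper intends. The paper dismisses the proof with ``follows the same ideas as for the resolvent'', i.e.\ it would rerun the gluing construction of Theorem \ref{th:cont_resolvent_A_Sigma}: patch a parametrix from the explicit cylinder operator $R^0(\omega)$ (continued via Theorem \ref{th:cont_resolvent_A^0}) and a fixed resolvent on the bounded part, then invoke the meromorphic Fredholm theorem. You instead construct a single coercive reference operator $K_{\omega_0}$ in weighted spaces and transfer the already-established continuation of $R_\varnothing(\omega)$ through the identity $K_\omega = K_{\omega_0} + (\omega^2-\omega_0^2)R_\varnothing(\omega)K_{\omega_0}$ --- which is precisely item (2) of the paper's Theorem \ref{th:perturbation_K_D}, so you are in effect proving Theorem \ref{th:cont_K_D} from machinery the paper only deploys afterwards. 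Your route is more economical (no second Fredholm argument) and makes the location of the poles transparent; the paper's route is self-contained at this point of the exposition and yields the fine structure of the branching at the thresholds $\pm\Lambda_j$ directly from the residue computation on the cylinder, which your reduction simply inherits from Theorem \ref{th:cont_resolvent_A_Sigma}. Two points deserve the care you already flag: (i) the decay rate of $K_{\omega_0}g$ along the cylindrical end is limited both by the spectral gap at $\omega_0^2$ and by the decay of $g$ itself on the lateral boundary $\RR_{\ge 0}\times\partial G$, so one must take $\beta$ small relative to both to conclude $K_{\omega_0}g\in H^1_{-\beta}(\Omega)\cap L_{2,-\beta}(\Omega)$, which is what feeds $R_\varnothing(\omega)$ and makes $K_{\omega_0}g$ trivially outgoing; (ii) the poles of the continuation are \emph{at most} the resonances of $A_\varnothing$ (the residue could annihilate the range of $K_{\omega_0}$), but only this inclusion is needed for the statement.
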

The proof follows the same ideas as for the resolvent and will be omitted. In order to treat the mixed problem we introduce the following function spaces  
\begin{align*}
	H^{s}_{0}(\overline{\Sigma}) &:= \{ g \in H^{s}(\partial \Omega) : \supp(g ) \subseteq \overline{\Sigma} \} ,\\
	 H^{-s}(\Sigma) &:= \{ G|_{\Sigma} \in C_c^\infty(\Sigma)': G \in H^{-s}(\partial \Omega)  \} \\
	 &= \{ G|_{\Sigma} \in C_c^\infty(\Sigma)' : G \in H^{-s}_\beta(\partial \Omega)  \}
\end{align*}
We denote by $r_{\Sigma} : \cup_s H^{-s}_{\beta}(\partial \Omega) \to H^{-s} (\Sigma)$  the  restriction operator and let $e_\Sigma : H^{s}_{0}(\overline{\Sigma})
\to \cap_s H^{s}_{-\beta} (\partial \Omega)$ be  the corresponding embedding. 
Since $\Sigma$ has Lipschitz boundary we have a dual pairing between $H^{s}_0(\overline{\Sigma})$  and  $H^{-s}(\Sigma)$, which reads as
\begin{equation}\label{eq:dual_pairing}
	\scal{g}{h}_{\Sigma} := \scal{G}{ h}_{\partial \Omega}  ,\qquad g \in H^{-s}(\Sigma) 
		, \; h \in H^{s}_{0}(\overline{\Sigma}) ,
\end{equation}
where  $G \in H^{-s}_\beta(\partial \Omega)$ is an arbitrary extension of  $g \in H^{-s}(\Sigma)$, cf.\ \cite[Theorem 3.30]{McLean} for the case $\beta=0$. The case $\beta \neq 0$ follows likewise. Finally we define truncated Dirichlet-to-Neumann operator
$$  D_{\Sigma, \omega} : H^{1/2}_0(\overline{\Sigma})  \to H^{-1/2} (\Sigma) , \qquad 
D_{\Sigma, \omega}  := r_{\Sigma} D_\omega e_\Sigma , 
$$
which extends to an meromorphic function on the Riemann surface $\cup_{\beta > 0} \Psi_\beta$.  From 
Theorem \ref{th:cont_outgoing} and Theorem \ref{th:cont_K_D} we obtain the following result.
\begin{theorem}
	Let $\lambda_0 \in \Psi_\beta$ be not a resonance of $A_\varnothing$. Then the following assertions are equivalent:
	\begin{enumerate}
		\item $\lambda_0$ is a resonance of $A_{\Sigma}$;
		\item $\ker(D_{\Sigma,\lambda_0}) \neq \{ 0 \}$. 
	\end{enumerate}
\end{theorem}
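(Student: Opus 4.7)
The plan is to establish the equivalence by going back and forth between outgoing solutions of the mixed boundary value problem \eqref{eq:outgoing_pde} and boundary data in $\ker D_{\Sigma,\lambda_0}$, using $K_{\lambda_0}$ as the bridge. The assumption that $\lambda_0$ is \emph{not} a resonance of $A_\varnothing$ is crucial because it guarantees, via Theorem \ref{th:cont_K_D}, that $K_{\lambda_0}$ is a well-defined solution operator producing the unique outgoing solution of the pure Dirichlet problem at the parameter $\lambda_0$.

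For the implication (1) $\Rightarrow$ (2), I would start from Theorem \ref{th:equivalence_resonances}: a resonance yields a non-trivial outgoing $u \in H^1_\beta(\Omega)$ solving $(A(x,\nabla_x)-\lambda_0^2)u=0$ with $\gamma_0 u = 0$ on $\partial\Omega\setminus\overline\Sigma$ and $\gamma_1 u = 0$ on $\Sigma$. Setting $g := \gamma_0 u$, the first boundary condition means $\supp g \subseteq \overline\Sigma$, so $g \in H^{1/2}_0(\overline\Sigma)$. By uniqueness of the outgoing Poisson solution (Theorem \ref{th:cont_K_D}) we have $u = K_{\lambda_0} e_\Sigma g$, and hence $D_{\Sigma,\lambda_0} g = r_\Sigma \gamma_1 u = 0$. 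It remains to check $g \neq 0$: if $g=0$ then by that same uniqueness $u=0$, contradicting non-triviality.

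For (2) $\Rightarrow$ (1), take $g \in H^{1/2}_0(\overline\Sigma) \setminus\{0\}$ with $D_{\Sigma,\lambda_0} g = 0$ and define $u := K_{\lambda_0} e_\Sigma g \in H^1_\beta(\Omega)$. By construction $u$ is an outgoing solution of $(A(x,\nabla_x)-\lambda_0^2)u = 0$ in $\Omega$ with $\gamma_0 u = e_\Sigma g$, which vanishes on $\partial\Omega\setminus\overline\Sigma$. The vanishing of $\gamma_1 u$ on $\Sigma$ is, via the duality \eqref{eq:dual_pairing}, precisely the equation $r_\Sigma \gamma_1 K_{\lambda_0} e_\Sigma g = D_{\Sigma,\lambda_0} g = 0$. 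Since $\gamma_0 u = e_\Sigma g \neq 0$, the solution $u$ is non-trivial, and Theorem \ref{th:equivalence_resonances} then gives that $\lambda_0$ is a resonance of $A_\Sigma$.

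The main conceptual obstacle is to justify that $\gamma_1 u = 0$ on $\Sigma$ in the weak sense is equivalent to $D_{\Sigma,\lambda_0} g = 0$ in $H^{-1/2}(\Sigma)$; this is essentially the compatibility between the weak conormal derivative defined through Green's formula and the restriction/extension pair $(r_\Sigma, e_\Sigma)$, together with the duality \eqref{eq:dual_pairing}. Once that identification is clear, and once one uses the hypothesis that $\lambda_0$ is not a resonance of $A_\varnothing$ to invoke the unique solvability of the outgoing Poisson problem at $\lambda_0$ (both to recover $u = K_{\lambda_0} e_\Sigma g$ in the first direction and to conclude $u\neq 0$ from $g\neq 0$ in the second), the proof reduces to the bookkeeping outlined above.
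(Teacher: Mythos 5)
Your argument is correct and is precisely the route the paper intends: the paper gives no written proof but states that the theorem follows from Theorem \ref{th:cont_outgoing} (equivalently, Theorem \ref{th:equivalence_resonances}) together with Theorem \ref{th:cont_K_D}, and your back-and-forth via $K_{\lambda_0}$, the uniqueness of the outgoing Dirichlet solution at a non-resonance of $A_\varnothing$, and the duality pairing \eqref{eq:dual_pairing} is exactly the bookkeeping that citation leaves implicit.
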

Finally we will need the following lemma which provides a  formula for the Dirichlet-to-Neumann operator if the spectral parameter is perturbed. 
\begin{theorem} \label{th:perturbation_K_D}
	Let $\omega, \eta \in \mathbb{H}_+$. Then the following identities hold true:
	\begin{enumerate}
		\item $K^*_\omega =  \gamma_1 R_{\varnothing}(- \overline{\omega})$; 
		\item $K_\omega = K_\eta +  (\omega^2 - \eta^2) R_\varnothing (\omega) K_\eta$;
		\item $  D_\omega =  D_{-\overline{\eta}} -   ( \omega^2 - {\overline{\eta}}^2)  K^*_\eta ( I + (\omega^2 - \eta^2) 
	R_\varnothing (\omega)  ) K_{\eta} $.
\end{enumerate}
\end{theorem}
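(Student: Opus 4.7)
\textbf{Plan of proof for Theorem \ref{th:perturbation_K_D}.} The three identities should be proved in order and each relies on standard Green-type manipulations. For (1) my plan is to fix $g\in H^{1/2}(\partial\Omega)$ and $f\in L_2(\Omega)$ and apply Green's formula (which is valid by the regularity afforded by Lemma \ref{lemma:Poisson_problem} and the fact that $R_\varnothing(-\bar\omega)f\in D(A_\varnothing)$) to $u=K_\omega g$ and $v=R_\varnothing(-\bar\omega)f$. The bulk terms combine as
\[
\langle Au,v\rangle-\langle u,Av\rangle=\omega^2\langle u,v\rangle-\bar\omega^2\langle u,\bar v\rangle-\langle u,f\rangle = -\langle u,f\rangle,
\]
because $\overline{\bar\omega^2}=\omega^2$. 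On the boundary, $\gamma_0 v=0$ kills one term and only $\langle \gamma_0 u,\gamma_1 v\rangle_{\partial\Omega}=\langle g,\gamma_1 R_\varnothing(-\bar\omega)f\rangle$ survives. Matching the two sides (with the appropriate sign convention from \eqref{def:sesq_form}) yields $\langle K_\omega g,f\rangle_{L_2(\Omega)}=\langle g,\gamma_1 R_\varnothing(-\bar\omega) f\rangle_{\partial\Omega}$, which is exactly (1).

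For (2), I would use the defining properties of $K_\eta$ and $K_\omega$ directly. Set $w:=K_\omega g-K_\eta g\in H^1(\Omega)$. Then $\gamma_0 w=g-g=0$ on $\partial\Omega$, so $w\in D[\mathfrak{a}]$ with full Dirichlet conditions, and
\[
(A-\omega^2)w=-(A-\omega^2)K_\eta g=-(\eta^2-\omega^2)K_\eta g=(\omega^2-\eta^2)K_\eta g.
\]
Applying $R_\varnothing(\omega)$ yields $w=(\omega^2-\eta^2)R_\varnothing(\omega)K_\eta g$, which is identity (2).

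For (3), the plan is a purely algebraic combination. First, by (2),
\[
\bigl(I+(\omega^2-\eta^2)R_\varnothing(\omega)\bigr)K_\eta=K_\omega,
\]
so the right-hand side of (3) simplifies to $D_{-\bar\eta}-(\omega^2-\bar\eta^2)K_\eta^* K_\omega$. Hence it suffices to show $D_\omega-D_{-\bar\eta}=-(\omega^2-\bar\eta^2)K_\eta^* K_\omega$. By (1) applied with $\omega$ replaced by $\eta$ one has $K_\eta^*=\gamma_1 R_\varnothing(-\bar\eta)$, and applying (2) with $(\omega,\eta)$ replaced by $(-\bar\eta,\omega)$ gives
\[
R_\varnothing(-\bar\eta)K_\omega=\tfrac{1}{\bar\eta^2-\omega^2}\bigl(K_{-\bar\eta}-K_\omega\bigr).
\]
Taking $\gamma_1$ of both sides and multiplying by $(\bar\eta^2-\omega^2)$ produces $(\bar\eta^2-\omega^2)K_\eta^* K_\omega=D_{-\bar\eta}-D_\omega$ up to the sign convention inherited from (1), which is the desired identity.

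The routine pieces are straightforward; the main obstacle is bookkeeping. Two points need care: (i) making sure that Green's formula may legitimately be applied (which uses that $K_\omega g$ has $H^1$ regularity and that $A K_\omega g=\omega^2 K_\omega g\in L_2$, so the weak conormal derivative is well defined in $H^{-1/2}$), and (ii) keeping the signs and conjugations straight in (1) — the parameter $-\bar\omega$ (rather than $\omega$ or $\bar\omega$) appears precisely because $\overline{\omega^2}=(\bar\omega)^2=(-\bar\omega)^2$ while $R_\varnothing(\cdot)$ is only defined on $\overline{\mathbb{H}_+}$ (up to meromorphic continuation). Once these conventions are fixed, the identities in (2) and (3) follow mechanically.
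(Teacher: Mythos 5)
The paper does not prove this theorem at all --- it simply cites Behrndt--Langer --- so the only meaningful comparison is with the standard boundary-triplet argument behind that citation, and your route (second Green identity for (1), the resolvent identity for (2), pure algebra for (3)) is exactly that argument. Part (2) is complete and correct: $w=K_\omega g-K_\eta g$ lies in $D(A_\varnothing)$ because $\gamma_0 w=0$ and $Aw\in L_2(\Omega)$, and $\omega^2\in\rho(A_\varnothing)$ since $A_\varnothing\ge 0$. The reduction of (3) to (1) and (2) is also the right move.

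The problem is that the sign you defer to ``the appropriate sign convention'' is not a convention you are free to choose --- it is determined by the paper's definitions, and it does not come out the way you state it. With $\gamma_1 u=\mathbf n\cdot\gamma_0(\mathbf a\nabla u)$ the \emph{outward} conormal derivative and the paper's Green formula $\scal{\gamma_1 u}{\gamma_0 v}=\mathfrak a[u,v]-\scal{Au}{v}$, subtracting the conjugated formula for $v$ and using the Hermitian symmetry of $\mathfrak a$ gives, for $u=K_\omega g$ and $v=R_\varnothing(-\overline\omega)f$,
\begin{align*}
-\scal{g}{\gamma_1 v}_{\partial\Omega}=\scal{u}{Av}-\scal{Au}{v}=\scal{K_\omega g}{f}_{L_2(\Omega)},
\end{align*}
i.e.\ $K_\omega^{*}=-\gamma_1 R_\varnothing(-\overline\omega)$, with a minus sign. (Sanity check: for $A=-\Delta$, $\omega=it$, $g\ge0$, $f\ge0$, one has $\scal{K_\omega g}{f}\ge0$ while the outward normal derivative of $R_\varnothing(-\overline\omega)f\ge0$ vanishing on $\partial\Omega$ is $\le0$.) This minus sign is exactly what your step (3) needs: your own algebra yields $\gamma_1 R_\varnothing(-\overline\eta)K_\omega=(\overline\eta^{\,2}-\omega^2)^{-1}(D_{-\overline\eta}-D_\omega)$, so $-(\omega^2-\overline\eta^{\,2})K_\eta^{*}K_\omega$ equals $D_\omega-D_{-\overline\eta}$ only if $K_\eta^{*}=-\gamma_1 R_\varnothing(-\overline\eta)$; if you instead insert (1) exactly as you (and the theorem) state it, the right-hand side of (3) evaluates to $2D_{-\overline\eta}-D_\omega$, not $D_\omega$. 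In other words, your proof of (1) and your proof of (3) are mutually inconsistent as written, and the inconsistency is inherited from the theorem itself: under the paper's own definition of $\gamma_1$, identities (1) and (3) cannot both hold as printed --- one of them carries an extra factor $-1$ (a convention mismatch with the cited reference). To make the proof sound you must fix the sign in (1) explicitly (or redefine $\gamma_1$ or the duality pairing with the opposite sign) and then carry that choice consistently through (3), rather than leaving it as an unresolved ``bookkeeping'' remark.
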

For the proof we refer e.g.\ to Theorem 2.6 in \cite{BehLanger07} in the context of boundary triplets. 
Then we obtain for   the truncated Dirichlet-to-Neumann operator that 
\begin{align*}
	 D_{\Sigma,\omega} =  D_{\Sigma, -\overline{\eta}} -   ( \omega^2 - {\overline{\eta}}^2) r_{\Sigma}  K^*_\eta ( I + (\omega^2 - \eta^2) 
	R_\varnothing (\omega)  ) K_{\eta} e_{\Sigma}
\end{align*}
We want to extend the formula to $\omega \in \Psi_\beta$. Let $\eta \in \mathbb H_+$ be fixed and choose $\beta$ such that $\Xi_{\beta} (- \overline{\eta}) = \varnothing$. From Theorem \ref{th:asymptotics_expansion_solutions} we obtain that $ R_{\varnothing}(- \overline{\eta}) : L_{2,\beta} (\Omega) \to H^1_{\beta}(\Omega)$. Since the boundary and the coefficients of $A(x,\nabla_x)$ are smooth in  some neighbourhood of the window local regularity estimates imply  that 
$$ r_\Sigma  K_{\eta}^* = r_{\Sigma} \gamma_1 R_{\varnothing}(- \overline{\eta}) : L_{2,\beta} (\Omega) \to H^{1/2}(\Sigma) , $$
continuously. This implies $K_{\eta} e_\Sigma =  ( r_\Sigma  K_{\eta}^* )^* : H^{- 1/2}_{0} (\overline{\Sigma}) \to L_{2,-\beta}(\Omega)$, and thus, we observe that assertion (3) from  the previous theorem holds also true for $\omega \in \Psi_\beta$.
\section{An asymptotic formula}
In what follows let $\Sigma \subseteq \partial \Omega$ be the Neumann window. We assumed that $ \overline{\Sigma}$  is contained in the domain $U$ of some smooth chart $\kappa : U \to \kappa(U)$ and that $\kappa(U)$ is star-shaped with centre $0$. Let $s_0 := \kappa^{-1}(0)$. We denote by $\Sigma^* :=  \kappa (\Sigma)$  the window in local coordinates and the scaled window $\Sigma_\ell$ shall satisfy
$ \kappa(\Sigma_\ell) = \ell \cdot \Sigma^* =: \Sigma_\ell^* $ for $\ell \in (0,1)$. 
We want to use the Dirichlet-to-Neumann operator to investigate the behaviour of the resonances of $A_{\Sigma_\ell}$ as $\ell \to 0$. Since for different $\ell$ the operators 
$D_{\Sigma_\ell, \omega}$ are each acting in a different Hilbert space, we define the unitary operator
$$ T_\ell  : L_2(\Sigma^*) \to L_2(\Sigma_\ell)  , \qquad 
T_\ell g(x ) := \ell^{-(d-1)/2} ( \sqrt{ \alpha } \cdot  g) ( \kappa( x) /\ell) , $$
where $\alpha(t)$ denotes the Jacobian of the chart. For  $T_\ell$ and its adjoint we have 
$$ T_\ell : H^{1/2}_0(\overline{\Sigma^*}) \to H^{1/2}_{0}(\overline{\Sigma_\ell}) \quad \text{and} \quad
 T^*_\ell : H^{-1/2} (\Sigma_\ell) \to H^{-1/2} (\Sigma^*) .$$  
We define the scaled Dirichlet-to-Neumann operator  by 
\begin{align}
	\mathcal Q(\ell, \omega) : H^{1/2}_{0}(\overline{\Sigma^*}) \to H^{-1/2}(\Sigma^*), \qquad \mathcal Q(\ell, \omega) := T_\ell^* D_{\Sigma_\ell, \omega}  T_\ell  . 
\end{align}
Then $\lambda_0 \in \Psi_\beta$ is a resonance of $A_\Sigma$ if and only if $\ker  Q(\ell, \lambda_0)$ is non-trivial. Next we want to investigate the behaviour of $\mathcal Q(\ell, \omega)$ as $\ell \to 0$ for fixed spectral parameter. To this end we use the following result. The proof is given in the appendix. 
\begin{theorem}\label{th:psdo}
	Let $\omega \in \mathbb H_+$ and let $V \subseteq \RR^d$ be a suitable neighbourhood of $s_0$, $U = V \cap \Omega$. Let $\phi , \psi \in C_c^\infty(U)$, $\chi  \in C_c^\infty(V )$. Then the operators 
	$$ \psi D_{\omega} \phi: C_c^\infty(U) \to C^\infty(U)  \quad \text{and} \quad \chi  K_{\omega} \phi : C_c^\infty(U) \to C^\infty(
	V \cap \overline{\Omega}) , $$
	belong to the Boutet-de-Monvel calculus, i.e., $\chi K_{\omega} \phi $ is a classical potential operator of order $-1/2$ and $\psi D_{\omega} \phi$ is a classical pseudo-differential operator of order $1$.
\end{theorem}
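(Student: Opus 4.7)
The plan is to construct, via a boundary-flattening chart at $s_0$, a local Poisson parametrix $P$ in the Boutet-de-Monvel calculus for the Dirichlet problem $(A - \omega^2)u = 0$, $\gamma_0 u = g$, and then identify $\chi K_\omega \phi$ with $\chi P \phi$ modulo smoothing operators by exploiting well-posedness of the global Dirichlet problem (Lemma \ref{lemma:Poisson_problem}). The statement for $\psi D_\omega \phi$ then follows by post-composing with the conormal trace $\gamma_1$.

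Since $\partial \Omega$ and the coefficients of $A$ are smooth near $s_0$, I shrink $V$ so that both are $C^\infty$ on $V \cap \overline{\Omega}$, and pick a $C^\infty$ chart $\Phi : V \to W \subseteq \RR^{d-1} \times [0,\infty)$ sending $\partial \Omega \cap V$ into $\RR^{d-1} \times \{0\}$. The pull-back $\tilde A := \Phi_* A$ is strongly elliptic with smooth coefficients; I extend these smoothly to all of $\RR^{d-1} \times [0,\infty)$ in such a way that the uniform strong ellipticity is preserved. For the local parametrix I follow the classical symbolic procedure: in tangential Fourier variables the principal symbol factors as
\begin{equation*}
	\tilde a_2(x, \xi', \xi_d) - \omega^2 = \tilde a_2(x,0,1)\bigl(\xi_d - \lambda_+(x,\xi',\omega)\bigr)\bigl(\xi_d - \lambda_-(x,\xi',\omega)\bigr),
\end{equation*}
with $\pm \Im \lambda_\pm > 0$ for $\omega \in \mathbb{H}_+$ and $(\xi', \omega) \neq 0$. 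Solving iteratively the symbolic half-line Dirichlet problem in the normal direction yields a classical Poisson symbol $p \in S^{-1/2}$ with the transmission property, whose leading term is built from $(\xi_d - \lambda_+)^{-1}$. The resulting operator $\tilde P$ satisfies $(\tilde A - \omega^2) \tilde P \equiv 0$ and $\gamma_0 \tilde P \equiv I$ modulo smoothing; pulling $\tilde P$ back through $\Phi$ produces a local Poisson operator $P$ of order $-1/2$ in the Boutet-de-Monvel calculus.

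To identify $K_\omega$ with $P$ locally, fix $\chi_0 \in C_c^\infty(V)$ equal to $1$ on $\supp \chi \cup \supp \psi \cup \supp \phi$. For $g \in C_c^\infty(\partial \Omega \cap U)$ with $\supp g \subseteq \{\chi_0 = 1\}$, the function $w := K_\omega g - \chi_0 P g \in H^1(\Omega)$ (extending $\chi_0 P g$ by zero) satisfies
\begin{equation*}
	(A - \omega^2) w = -\chi_0 R_1 g - [A, \chi_0] P g \text{ in } \Omega, \qquad \gamma_0 w = -\chi_0 R_2 g \text{ on } \partial \Omega,
\end{equation*}
where $R_1, R_2$ are the smoothing residuals from the parametrix property. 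Both right-hand sides are smoothing images of $g$ away from $\supp \nabla \chi_0$, while on $\supp \chi$ the commutator term vanishes since $\chi_0 \equiv 1$ there. Lemma \ref{lemma:Poisson_problem} forces $w$ to be unique in $H^1(\Omega)$, and local elliptic regularity up to the smooth part of the boundary yields $\chi w \in C^\infty$ with seminorms controlled by any $H^{-N}$-norm of $g$. Hence $\chi(K_\omega - P)\phi$ is smoothing, which proves the first assertion. The assertion for $\psi D_\omega \phi$ follows by composing with $\gamma_1 = \mathbf n \cdot \gamma_0(\mathbf a \nabla \cdot)$: within the Boutet-de-Monvel calculus this turns a Poisson operator of order $-1/2$ into a classical pseudo-differential operator on the boundary of order $+1$, elliptic with principal symbol determined by $\mathbf a$, $\mathbf n$ and $\lambda_+$.

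The main technical difficulty is the symbolic construction of $\tilde P$: one has to verify smooth dependence of $\lambda_\pm$ on $(x, \xi', \omega)$ on conic subsets and that the resulting full symbol lies in the Boutet-de-Monvel class of transmission-type Poisson symbols. Once this is in place, the identification with $K_\omega$ rests only on well-posedness of the Dirichlet problem in $\mathbb{H}_+$ and on standard elliptic regularity near the smooth part of the boundary, both of which have been recorded earlier in the paper.
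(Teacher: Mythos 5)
Your argument is correct, and its overall architecture is the same as the paper's: both reduce the theorem to the existence of a Poisson (potential) parametrix for the locally elliptic Dirichlet problem in the Boutet-de-Monvel calculus — the paper's $L_\omega$, your $P$ — and then show that the localized difference with $K_\omega$ is a smoothing operator, the claim for $\psi D_\omega \phi$ following by composing a Poisson operator of order $-1/2$ with $\gamma_1$. The paper simply quotes the parametrix from \cite{Schulze} where you sketch its symbolic construction; that is only a difference in detail. The substantive divergence is in how the remainder is shown to be smoothing. The paper first establishes only the one-sided property $\chi(L_\omega - K_\omega)\phi : H^{1/2}(U) \to C^\infty(V\cap\overline{\Omega})$, which by itself does not yield a smooth kernel, and therefore adds a second computation showing that the adjoint maps $L_2$ into $C^\infty(U)$, using the identity $(\chi K_\omega\phi)^* = \phi\,\gamma_1 R_\varnothing(-\overline{\omega})\chi$ from Theorem \ref{th:perturbation_K_D} together with Green's formula, and then concludes smoothness of the kernel from the two one-sided properties as in \cite[Theorem 2.4.87]{Schulze}. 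You bypass the adjoint entirely by quantifying the one-sided estimate: since the parametrix residuals $R_1,R_2$ and the commutator $[A,\chi_0]P$ (disjoint supports plus pseudolocality of $P$) produce smooth data controlled by $\|g\|_{H^{-N}}$ for every $N$, the a priori bound of Lemma \ref{lemma:Poisson_problem} followed by elliptic regularity up to the smooth part of the boundary gives $\|\chi w\|_{C^k}\le C_{k,N}\|g\|_{H^{-N}}$, and continuity from $H^{-N}_{\mathrm{comp}}$ into $C^\infty$ for all $N$ does imply a smooth Schwartz kernel. Both routes are valid; yours is more self-contained (no adjoint identity or duality step), while the paper's avoids having to trace the $H^{-N}$-dependence through the well-posedness estimate. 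Do make explicit, as your sketch only implies, that you use not just the uniqueness but also the a priori estimate of Lemma \ref{lemma:Poisson_problem} to control the weak norm of $w$ entering the local regularity estimate.
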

For an introduction to  the Boutet de Monvel's calculus we refer  e.g.\ to \cite{Schulze}. Let $\kappa^* : C_c^\infty(\kappa(U)) \to C_c^\infty(U)$, $\kappa^* g := g \circ \kappa$ be the corresponding pullback operator. We  denote by $p(t, \theta) \in S^1(\kappa(U) \times \kappa(U) \times \RR^{d-1})$ the complete symbol of the operator 
$$ \frac{1}{\sqrt{\alpha}} \left( \kappa^{*,-1} \left( \psi D_{\omega} \phi  \right)  \kappa \right) \sqrt{\alpha} : C_c^\infty(\kappa(U)) \to 
C_c^\infty(\kappa(U)) . $$
Here  $S^j(\kappa(U) \times \RR^{d-1})$ are the standard symbol space, see e.g.\ \cite{Schulze} for their definition. Choosing $\phi = 1$ and $\psi =1$ on $\Sigma_\ell$ for all $\ell \in (0,1)$ we obtain 
\begin{align*} 
	\scal{\mathcal Q(\ell, \omega) g}{h} 
 		&=  \frac{\ell^{1-d}}{(2\pi)^{d-1}} \int_{\Sigma_\ell^*} \int_{\RR^{d-1} } \int_{\Sigma_\ell^*} e^{i (t-s) \theta } 
 		p(t,\theta) g(s/\ell ) \overline{h(t /\ell )} \; \dd s \; \dd \theta \; \dd  t 
\end{align*}
for all $g,h \in H^{1/2}_0(\overline{\Sigma^*})$. As   $p(t, \theta)$ is a classical symbol we have the following expansion into homogeneous symbols
$$ p(t, \theta )   \sim   \sum_{j=-1}^{\infty} p_j (t, \theta) ,    $$
where  $p_{j}$ is homogeneous of order $-j$.  
\begin{theorem}\label{th:psdo_expansion}
	There exist operators $Q_j$, $j=0, \ldots d-2$ and $Q_{j}^{(1)}$, $Q_j^{(2)}$, $j\ge d-1$, such that we have the following asymptotic expansion 
 	$$ \ell \mathcal Q(\ell, \omega )  \sim  \sum_{j=0}^{d-2} \ell^j Q_j + 
 	\sum_{j=d-1}^\infty \ell^j \left( Q_j^{(1)} + (\ln \ell) Q_j^{(2)}  \right) ,  $$
 	where for the first term we obtain 
	\begin{align*}
		\scal{\mathcal Q_0 g}{h} &:= \int_{\Sigma^*} \int_{\RR^{d-1} } \int_{\Sigma^*} e^{i (t-s) \theta } 
 			p_{-1} (0 ,  \theta) g(s) \overline{h(t)} \; \dd s\; \dd \theta \; \dd t .
	\end{align*}
\end{theorem}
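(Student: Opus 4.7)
The plan is to pass from the local integral representation of $\mathcal Q(\ell,\omega)$ to a universal rescaled form in which the only $\ell$-dependence sits inside the symbol, then to extract the asymptotic expansion by combining the classical homogeneous decomposition of $p$ with a Taylor expansion in the spatial variable, and finally to locate the logarithmic factors via the Schwartz kernel.

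First I would substitute $s = \ell s'$, $t = \ell t'$ and $\theta = \theta'/\ell$ in the defining integral of $\scal{\mathcal Q(\ell,\omega)g}{h}$. The Jacobians $\ell^{d-1}\cdot\ell^{d-1}\cdot\ell^{-(d-1)}$ combine with the prefactor $\ell^{1-d}$ to leave $(2\pi)^{-(d-1)}$, the windows $\Sigma^*_\ell$ collapse to the fixed window $\Sigma^*$, and the symbol is replaced by $p_\ell(t',\theta') := p(\ell t',\theta'/\ell)$. The whole $\ell$-dependence is thereby transferred into $p_\ell$, and the task reduces to expanding the oscillatory integral with symbol $p_\ell$ on the fixed set $\Sigma^* \times \RR^{d-1} \times \Sigma^*$ as $\ell \to 0$.

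Combining the classical expansion $p \sim \sum_{j\ge -1} p_j$ (with $p_j$ homogeneous of degree $-j$ in $\theta$ for $|\theta| \ge 1$) with a Taylor expansion in the first variable, the homogeneity $p_j(\ell t',\theta'/\ell) = \ell^{j} p_j(\ell t',\theta')$ valid for $|\theta'| \ge \ell$ and the expansion
$$p_j(\ell t',\theta') = \sum_{|\alpha|<M} \frac{\ell^{|\alpha|}(t')^\alpha}{\alpha!} \partial_t^\alpha p_j(0,\theta') + \ell^M r_{j,M}(\ell t',\theta')$$
jointly produce a formal expansion of $p_\ell$ in powers $\ell^{j+|\alpha|}$. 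After multiplication by $\ell$, the leading $\ell^0$ contribution to $\ell\mathcal Q(\ell,\omega)$ arises uniquely from the pair $(j,|\alpha|) = (-1,0)$ and is precisely the operator $\mathcal Q_0$ stated in the theorem; the intermediate operators $Q_1,\ldots,Q_{d-2}$ arise from higher $(j,|\alpha|)$ pairs paired with the (clean) homogeneity of the associated kernels.

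The delicate step is to convert this formal symbol expansion into an operator-norm asymptotic expansion and to locate the $\log\ell$ factors. For this I would analyse the Schwartz kernel $\mathcal K(\ell;t',s') = \ell^{d-1}\tilde K\bigl(\ell t',\ell(t'-s')\bigr)$, where $\tilde K(t,x) = (2\pi)^{-(d-1)}\int e^{ix\theta}p(t,\theta)\,d\theta$. The classical decomposition of $\tilde K(t,\cdot)$ near $x=0$ consists, modulo smooth terms, of homogeneous distributions of degree $j-(d-1)$ in $x$ arising from each $p_j$; these are uniquely determined as homogeneous distributions until the degree reaches a nonnegative integer, at which point the extension must include terms of type $|x|^{j-(d-1)}\log|x|$. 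Substituting $x = \ell(t'-s')$ turns purely homogeneous pieces into clean integer powers of $\ell$ via $|\ell y|^{j-(d-1)} = \ell^{j-(d-1)}|y|^{j-(d-1)}$, whereas the critical log terms produce the $\log\ell$ factors through $\log|\ell y| = \log\ell + \log|y|$; combining with the Taylor expansion in $t$ then yields all the operators $Q_j$, $Q_j^{(1)}$ and $Q_j^{(2)}$ in the statement. Uniform remainder estimates in $\mathcal L(H^{1/2}_0(\overline{\Sigma^*}), H^{-1/2}(\Sigma^*))$ follow from standard mapping properties on the compact window $\Sigma^*$ together with uniform symbol bounds on $r_{j,M}$. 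The main obstacle is this third step: correctly identifying the renormalisation of homogeneous distributions at critical integer degrees (where the extension is only defined up to polynomials supported at the origin) and verifying that the Taylor expansion in $t$ interacts with this renormalisation to reproduce exactly the two-component structure stated; $Q_j^{(2)}$ is canonically pinned down as the residue of the meromorphic family of homogeneous distributions, while $Q_j^{(1)}$ reflects the choice of renormalisation.
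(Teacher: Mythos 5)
Your proposal is correct and follows essentially the same route as the paper: decompose $p$ into its homogeneous components, use homogeneity in $\theta$ together with a Taylor expansion of $p_j(\ell t,\theta)$ in the spatial variable to extract the powers $\ell^{j+|\alpha|}$, and pass to the (pseudo-)homogeneous expansion of the Schwartz kernel to see that logarithms first appear when the kernel degree $j-(d-1)$ reaches $0$, i.e.\ at $j=d-1$. The only cosmetic difference is that you rescale $\theta\mapsto\theta/\ell$ globally, whereas the paper keeps $\theta$ fixed and splits off $p_{-1}+p_0$ with a cutoff $\delta(\theta)$, treating the $S^{-1}$ remainder purely through its kernel expansion; the operator-norm control you invoke is exactly the paper's Schur-test lemma.
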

\begin{remark}
	The expansion should be interpreted as follows: for $n \le d-3$ we have 
	$$ \Bigr\|  \ell \mathcal Q(\ell, \omega )  -  \sum_{j=0}^{n} \ell^j Q_j \Bigr\|= \mathcal O(\ell^{n+1}) ,
	$$
	whereas for $n \ge d-2$ we have
	$$ \Bigr\|  \ell \mathcal Q(\ell, \omega )  -  \sum_{j=0}^{d-2} \ell^j Q_j 
	- \sum_{j=d-1}^n \ell^j \left( Q_j^{(1)} + (\ln \ell) Q_j^{(2)}  \right) 
	\Bigr\|  = \mathcal O( \ell^{n+1}\ln \ell ) 
	$$
	as operators mapping $H^{1/2}_0(\overline{\Sigma^*})$ into $H^{-1/2}(\Sigma)$. 
\end{remark}
\begin{proof}
Choose $\delta(\theta) \in C_c^ \infty(\RR^{d-1})$ such that $\delta= 0$ in some neighbourhood 
of $0$ and $\delta =1$ outside some compact set. Then we have 
$$ p(t, \theta) = \delta(\theta)  \bigr( p_{-1} (t, \theta) + p_0(t, \theta) \bigr) + r (t, \theta) . $$
for some $r (t, \theta) \in S^{-1}(\kappa(U) \times \RR^{d-1})$. 
For $g, h \in C_c^\infty(\Sigma^*)$ we have 
\begin{align*}
	& \scal{\mathcal Q(\ell,0) g}{h} \\
	&=  \frac{\ell^{1-d}}{(2\pi)^{d-1}} \sum_{j \in \{-1,0\}} \int_{\Sigma_\ell^*} \int_{\RR^{d-1} } \int_{\Sigma_\ell^*} 
	e^{i (t-s) \theta } 
 	p_{j} (t,  \theta) \delta (\theta ) g(s/\ell) \overline{h(t/\ell)} \; \dd s \; \dd \theta \; \dd t \\
  	&\quad + \ell^{d-1} \int_{\Sigma^*} \int_{\Sigma^*}  k_r(\ell t,\ell(t-s)) g(s) h(t) \; \dd s \;  \dd t,  
\end{align*}
where $k_r(t,u)$ the Schwartz kernel of the operator $r(t,D)$ with $D = - i\nabla$. 
We note that  $k_r$ is integrable since it admits an expansion 
$$ 
	k_r(t,u) \sim \sum_{j \ge -d+2 } k_{r,j} (t,u) , 
$$
into pseudo-homogeneous functions $k_{r,j}$. For the corresponding definitions and proofs we refer to \cite[Chapter 7]{WendlandHsiao}. 
We note that for $j =- (d-2), \ldots , -1$ the functions $k_{r,j}(t,u)$ are homogeneous of degree $j$ in the second component. For these $j$ we have
\begin{align*} 
	&\int_{\Sigma^*} \int_{\Sigma^*}  k_{r,j}(\ell t,\ell(t-s)) g(s) h(t) \; \dd s \;  \dd t \\
	&\quad= \ell^{j} \int_{\Sigma^*} \int_{\Sigma^*}  k_{r,j} (\ell t,t-s) g(s) h(t) \; \dd s \;  \dd t 
\end{align*}
and we may apply a  Taylor expansion with respect to the first variable. The case $j\ge 0$ follows likewise, however additional logarithmic terms will appear. Next we want to treat the terms involving $p_{-1}(t, \theta)$ and $p_0(t, \theta)$. For $j \in \{ -1, 0\}$ we have 
\begin{align*}
	&\frac{\ell^{-(d-1)}}{(2\pi)^{d-1}}  \int_{\Sigma_\ell^*} \int_{\RR^{d-1} } \int_{\Sigma_\ell^*} 
	e^{i (t-s) \theta } 
	p_{j} (t,  \theta) \delta (\theta ) g(s/\ell) \overline{h(t/\ell)} \; \dd s \; \dd \theta \; \dd t \\
	&= \frac{\ell^{j}}{(2\pi)^{d-1}}  \int_{\Sigma^*} \int_{\RR^{d-1} } \int_{\Sigma^*} 
	e^{i (t-s) \theta } 
	p_{j} (\ell t,  \theta) g(s) \overline{h(t)} \; \dd s \; \dd \theta \; \dd t \\
	&\quad + \frac{\ell^{j}}{(2\pi)^{d-1}}  \int_{\Sigma^*} \int_{\RR^{d-1} } \int_{\Sigma^*} 
	e^{i (t-s) \theta } 
	k^{(j)} (\ell t, \ell( t-s))  g(s) \overline{h(t)} \; \dd s \; \dd \theta \; \dd t ,
\end{align*}
which is well-defined since $p_j(t, \cdot)$ is bounded in some neighbourhood of $\theta = 0$. Note that $k^{(j)} \in C^\infty(\kappa(U) \times \RR^{d-1})$, and we easily obtain an asymptotic expansion for this term. For $n \ge 0$ we have  
$$ p_{j} (\ell t, \theta) = \sum_{|\alpha| \le n} \ell^{|\alpha|} \frac{\partial_t^{\alpha} p_j (0, \theta)}{\alpha!}
 		t^\alpha + \ell^{n+1} r_{j,n,\ell}(t, \theta)   ,
$$
where $\{ r_{j,n, \ell}(t, \theta)  : \ell \in (0,1) \}$ is bounded in the space of homogeneous symbols. The assertion follows now 
from the next lemma. 
\end{proof} 
\begin{lemma}\label{lemma:symbol_mapping_bounded}
	Let $ \phi , \psi  \in C_c^\infty(\kappa(U))$ and let  $q \in S^{j}_{\mathrm{hom}} (\kappa(U) \times \RR^{d-1})$, where we assume that $j \in \{0,1 \}$.  We define for $g \in C_c^\infty(\kappa(U))$
	$$ q(t,D) g(t) := \frac{1}{(2\pi)^{d-1}}  \int_{\kappa(U)} \int_{\RR^{d-1} } 
	e^{i (t-s) \theta } q (t,  \theta) g(s) \; \dd s \; \dd \theta   . $$
	Then the following assertions hold true:
	\begin{enumerate}
		\item If  $j=0$ then $\psi q(t, D) \phi : L_2(\kappa(U)) \to L_2(\kappa(U))$ defines a bounded operator. 
		\item If $j=1$, then $\psi q(t,D) \phi : H^{1/2}(\kappa(U)) \to H^{-1/2}(\kappa(U))$ defines a bounded operator. 
	\end{enumerate}
	Moreover, the mapping $q (t, \theta) \mapsto \psi q(t, D)  \phi$ is continuous with respect to the underlying topologies 
\end{lemma}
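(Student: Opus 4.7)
My plan is to split the homogeneous symbol into a high-frequency part lying in the standard Hörmander class and a low-frequency part whose Schwartz kernel is bounded and compactly supported. Choose a cutoff $\chi_0 \in C_c^\infty(\RR^{d-1})$ which equals $1$ in a neighbourhood of $0$, and decompose $q = q_1 + q_2$ with $q_1 := (1-\chi_0)\,q$ and $q_2 := \chi_0\, q$.

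The first piece $q_1$ is smooth on all of $\RR^{d-1}$ because the singular locus $\theta = 0$ is cut away. Positive homogeneity gives the symbol estimates $|\partial_t^\beta \partial_\theta^\alpha q_1(t,\theta)| \le C_{\alpha\beta}(1+|\theta|)^{j-|\alpha|}$ uniformly in $t$ on the compact set $\supp(\psi)$, so $\psi\, q_1 \in S^j_{1,0}(\kappa(U)\times \RR^{d-1})$ in the standard sense. The Calderón-Vaillancourt theorem for $j=0$, respectively the mapping property of order-one Hörmander operators on Sobolev spaces for $j=1$, then yields boundedness of $\psi\, q_1(t,D)\, \phi$ on $L_2(\kappa(U))$, respectively from $H^{1/2}(\kappa(U))$ into $H^{-1/2}(\kappa(U))$.

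For the second piece $q_2$, note that $q$ is bounded on $\supp(\chi_0)\setminus\{0\}$: for $j=0$ this is immediate from homogeneity, while for $j=1$ positive homogeneity gives $q(t,\theta) = O(|\theta|)$ near the origin. Hence the Schwartz kernel
\begin{equation*}
  k(t,s) = \frac{1}{(2\pi)^{d-1}} \int_{\RR^{d-1}} e^{i(t-s)\theta}\, \chi_0(\theta)\, q(t,\theta)\, \dd\theta
\end{equation*}
is a bounded continuous function of $(t,s)$, and multiplication by $\psi(t)\phi(s)$ yields a bounded, compactly supported kernel. The associated integral operator is Hilbert-Schmidt on $L_2(\kappa(U))$, hence bounded; composing with the continuous embeddings $H^{1/2}\hookrightarrow L_2 \hookrightarrow H^{-1/2}$ gives the assertion in case (2) as well.

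Continuity of $q \mapsto \psi\, q(t,D)\, \phi$ follows because the natural Fréchet topology on $S^j_{\mathrm{hom}}$ is generated by $C^\infty$-seminorms on $\kappa(U)\times S^{d-2}$, and these seminorms control, via homogeneity, both the Hörmander seminorms of $\psi\, q_1$ and the sup-norm of $\chi_0\, q$ with its $t$-derivatives on $\supp(\chi_0)$; the $L_2$-boundedness theorem and the Hilbert-Schmidt bound are continuous in these seminorms respectively. The principal obstacle is precisely the singularity of a homogeneous symbol at $\theta=0$, which prevents a direct application of the classical Calderón-Vaillancourt estimate to $q$ itself; the cutoff splitting isolates this singularity into a compactly supported, bounded piece that is then handled by an elementary kernel estimate, while the remainder lies in the standard symbol class.
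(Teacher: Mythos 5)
Your proof is correct, but it takes a genuinely different route from the paper. The paper follows H\"ormander's Theorem 18.1.11$'$ directly: it writes the Fourier transform of $\psi q(t,D)\phi g$ as an integral operator in the frequency variables with kernel $\hat q(\eta-\theta,\theta)\langle\theta\rangle^{-1/2}$, uses the estimate $|\hat q(\eta-\theta,\theta)|\le c_n\langle\theta\rangle\langle\eta-\theta\rangle^{-n}$ (which needs only smoothness and compact support in $t$ together with the homogeneity bound $|q(t,\theta)|\le C\langle\theta\rangle$, no $\theta$-derivatives at all), and closes with a Schur test with weights $\alpha(\eta)=\langle\eta\rangle^{1/2}$, $\beta(\theta)=\langle\theta\rangle^{1/2}$ and Peetre's inequality. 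Because no integration by parts in $\theta$ is performed, the singularity of the homogeneous symbol at $\theta=0$ causes no difficulty and no frequency splitting is needed; moreover the constants are visibly linear in finitely many symbol seminorms, which gives the continuity statement for free. Your argument instead isolates the singularity by a cutoff $\chi_0$ near $\theta=0$, treats the high-frequency part $(1-\chi_0)q$ as a standard $S^j_{1,0}$ symbol via Calder\'on--Vaillancourt (for $j=0$) or the $H^s\to H^{s-1}$ mapping property (for $j=1$), and handles the low-frequency part by an elementary Hilbert--Schmidt kernel bound. This is more modular and leans on bigger black boxes, at the cost of having to track that both the H\"ormander seminorms of $\psi q_1$ and the sup-norm of the low-frequency kernel are controlled by the $C^\infty(\kappa(U)\times S^{d-2})$ seminorms of $q$ (which you do note, and which is indeed immediate from homogeneity). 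Both arguments are complete; yours generalises more readily to other orders $j$ and other Sobolev indices, while the paper's is self-contained and shorter.
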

The proof is given in the appendix. 
\subsection{Meromorphic operator valued functions}\label{section:finitely:meromorphic}
In order to prove the main results we need elements from the theory of  operator-valued meromorphic functions as in \cite{GohbergSigal}. We want to recall the basic notions. Let $X,Y$ Banach spaces and $\mathcal O \subseteq \CC$ be an open set. A mapping  $\mathcal A : \mathcal O \backslash K \to \mathcal L(X,Y)$, where $K \subseteq \mathcal O$ is a discrete subset, 
is called finitely  meromorphic of Fredholm type if for all $\omega_0 \in \mathcal O$ the function $\mathcal A$ 
admits a Laurent expansion near $\omega_0$, 
\begin{align}\label{eq:Laurent_exp_finite_mero}
	\mathcal A(\omega) = \sum^\infty_{k=-N} A_k ( \omega - \omega_0)^k ,
\end{align}
where the operators $A_{-1},\ldots A_{-N} \in \mathcal L(X,Y)$ are finite-rank operators and $A_0$ is Fredholm. 
A point $\omega_0 \in \mathcal O$ is called characteristic value of $\mathcal A$ if there exists a holomorphic 
function $\phi : U \subseteq \mathcal O \to X$ with $\phi (\omega_0) = x \neq 0$ such that $\mathcal A(\omega) \phi(\omega) \to 0$ as $\omega \to \omega_0$. 
The order of the zero of $\mathcal A(\omega) \phi(\omega)$ at $\omega_0$ 
is called the multiplicity  of the root functional $\phi$ corresponding to the eigenvector $x = \phi (\omega_0)$ and is denoted by $\mathrm{rank}\; x$. Let 
\begin{align}
	\ker \mathcal A(\omega_0) := \{ x : x \text{ eigenvector of } \mathcal A(\omega_0)  \} . 
\end{align}
The rank of an eigenvector $x_0$ is defined as the supremum  taken over  the multiplicities  
of all root functional $\phi$ corresponding to $x_0$. 

Let $\dim \ker \mathcal A(\omega_0) =: n < \infty$ and assume that every eigenvector has finite multiplicity. 
Then  we may choose a basis $x_1, \ldots, x_n $ of $\ker \mathcal A(\omega_0)$ 
such that $ \mathrm{rank} \; x_j$ is the maximum of the ranks of all eigenvectors in some
direct complement of the linear span of the vectors $x_1 , \ldots, x_{j-1}$. The value
\begin{align}
	N(\mathcal A; \omega_0) := \sum_{j=1}^n \mathrm{rank} \; x_j
\end{align}
is called null  multiplicity of the characteristic value  $\omega_0$. If $\omega_0$ is not a characteristic value 
then we put $N(\mathcal A;\omega_0 ) = 0$.
Assuming  that $\mathcal A(\omega) $ is invertible with the possible exception of a discrete subset we observe that $\mathcal A^{-1}$ is also finitely meromorphic and of Fredholm type. The  number
\begin{align}
	P(\mathcal A ; \omega_0 ) := N( \mathcal A^{-1} ; \omega_0) 
\end{align}
is called the polar multiplicity of $\omega_0$. We denote by  
\begin{align}
	M( \mathcal A ; \omega_0) = N(\mathcal A  ; \omega_0) - P(\mathcal A ; \omega_0) 
\end{align}
the total multiplicity of $\omega_0$. In what follows let $\Gamma$ denote a  Jordan curve in $\mathcal O$, whose interior is  contained in $\mathcal O$. Then $M(\mathcal A ;  \Gamma)$ denotes the sum of the total multiplicities of all characteristic values of either $\mathcal A$ or $\mathcal A^{-1}$ in the interior of $\Gamma$. 
\begin{theorem}[see also Theorem 2.1 in \cite{GohbergSigal}]\label{th:logarithmic_res}
	Let $\mathcal A : \mathcal O \to \mathcal L(X,Y)$ be a finitely meromorphic function of Fredholm type and assume that $\mathcal A$ is invertible on $\ran (\Gamma)$. For an analytic function  $f : \mathcal O \to \CC$ we have 
	\begin{align*}
		\sum_{\omega_0 \in \mathrm{Int}(\Gamma)} f(\omega_0) M(\mathcal A; \omega_0) = \frac{1}{2\pi i}  \tr
			\int_{\Gamma} f(\omega) \mathcal A(\omega)^{-1} \frac{\dd}{\dd \omega} \mathcal A(\omega) \; \dd \omega . 
	\end{align*}
\end{theorem}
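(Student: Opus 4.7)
The plan is to reduce the identity to a local statement at each characteristic value and then apply the scalar argument principle. Since $f$ is holomorphic on $\mathcal{O}$ and $\mathcal{A}$ is invertible on $\ran(\Gamma)$, the mapping $\omega \mapsto f(\omega) \mathcal{A}(\omega)^{-1} \mathcal{A}'(\omega)$ is a finitely meromorphic operator-valued function on the interior of $\Gamma$, with singularities only at the characteristic values of $\mathcal{A}$ and $\mathcal{A}^{-1}$. A routine inspection of the Laurent expansion \eqref{eq:Laurent_exp_finite_mero} shows that the principal part of $\mathcal{A}^{-1} \mathcal{A}'$ at each such point $\omega_0$ is of finite rank, so the corresponding contour integral is trace class. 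By the standard residue calculus it suffices to prove, for every $\omega_0 \in \mathrm{Int}(\Gamma)$, that
\begin{equation*}
  \frac{1}{2\pi i} \tr \int_{|\omega - \omega_0| = \varepsilon} f(\omega)\, \mathcal{A}(\omega)^{-1} \mathcal{A}'(\omega)\, \dd \omega = f(\omega_0)\, M(\mathcal{A}; \omega_0).
\end{equation*}

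To evaluate the local integral I invoke the Smith-type factorization for finitely meromorphic Fredholm-valued functions (cf.\ \cite{GohbergSigal}): in a neighbourhood of $\omega_0$ there exist holomorphic operator-valued functions $E(\omega), F(\omega) \in \mathcal{L}(X,X)$, $\mathcal{L}(Y,Y)$ with holomorphic inverses, mutually disjoint rank-one projections $P_1, \ldots, P_r$ summing to a finite-rank projection $P$, and integers $k_1, \ldots, k_r \in \mathbb{Z}$ such that
\begin{equation*}
  \mathcal{A}(\omega) = E(\omega) D(\omega) F(\omega), \qquad D(\omega) := (I - P) + \sum_{j=1}^r (\omega - \omega_0)^{k_j} P_j,
\end{equation*}
with the signed sum of the partial multiplicities satisfying $\sum_j k_j = N(\mathcal{A}; \omega_0) - P(\mathcal{A}; \omega_0) = M(\mathcal{A}; \omega_0)$. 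Using the product rule one splits
\begin{equation*}
  \mathcal{A}^{-1} \mathcal{A}' = F^{-1} F' + F^{-1} D^{-1} D' F + F^{-1} D^{-1} (E^{-1} E') D F,
\end{equation*}
and the cyclic property of the trace, applied after bringing the third term into the form $D F F^{-1} D^{-1} (E^{-1} E')$, yields that modulo holomorphic operator-valued functions whose contour integrals vanish, only the middle term contributes. The latter integrates to
\begin{equation*}
  \frac{1}{2\pi i} \int_{|\omega - \omega_0| = \varepsilon} f(\omega) D(\omega)^{-1} D'(\omega) \, \dd\omega = \sum_{j=1}^r k_j f(\omega_0)\, P_j,
\end{equation*}
whose trace is precisely $f(\omega_0)\, M(\mathcal{A}; \omega_0)$.

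The main obstacle is the rigorous justification of the cyclic manipulation, since $F^{-1} F'$ and $E^{-1} E'$ are in general not trace class, so their traces are not individually well-defined. The standard remedy is to isolate the finite-rank part by writing $E^{-1} E' = (E^{-1} E') P + (E^{-1} E')(I - P)$ and exploiting that $D(\omega)$ commutes with $I - P$ and acts as the identity there, so that the $(I - P)$-piece combines with $F^{-1} F'$ into a globally holomorphic operator-valued function whose contour integral is $0$, while the $P$-piece is finite rank and cyclicity applies legitimately. This is essentially the argument of Theorem~2.1 in \cite{GohbergSigal}, and inserting the holomorphic scalar weight $f(\omega)$ does not affect any step of it; the only new input is the scalar residue calculation $\oint f(\omega)(\omega - \omega_0)^{-1}\, \dd\omega = 2\pi i\, f(\omega_0)$ in place of the winding-number identity used in the unweighted case.
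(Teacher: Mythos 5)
Your proposal is correct and follows exactly the route the paper intends: the paper gives no proof of this statement but simply cites Theorem~2.1 of \cite{GohbergSigal}, and your argument is precisely the Gohberg--Sigal local Smith factorization combined with the scalar residue $\oint f(\omega)(\omega-\omega_0)^{-1}\,\dd\omega = 2\pi i f(\omega_0)$. The only imprecision is that after conjugation by $D$ it is the $(I-P)(\cdot)(I-P)$ block of $E^{-1}E'$ that is holomorphic, while the mixed blocks $P(\cdot)(I-P)$ and $(I-P)(\cdot)P$ are finite rank and are killed by the cyclicity of the trace (using $(I-P)DP=0$), but this is exactly the bookkeeping you already point to and does not affect the validity of the plan.
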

\begin{theorem}[Theorem 2.2 in \cite{GohbergSigal}]\label{th:Rouche}
	Let $\mathcal A : \Omega \to \mathcal L(X,Y)$ be chosen as above and let  $\mathcal \mathcal B : \Omega \to \mathcal L(X,Y)$
	be a finitely meromorphic function such that
	\begin{align*}
		\left\| \mathcal A(\omega)^{-1} \mathcal B(\omega) \right\|_{\mathcal L(X)} < 1  \qquad \text{for } \omega \in \ran (\Gamma ) .
	\end{align*}
	Then 
	\begin{align*}
		M(\mathcal A; \Gamma) = M ( \mathcal A + \mathcal B; \Gamma) .
	\end{align*}
\end{theorem}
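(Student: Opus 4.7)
The plan is a homotopy argument: set $\mathcal A_t(\omega) := \mathcal A(\omega) + t\mathcal B(\omega)$ for $t \in [0,1]$ and show that $t \mapsto M(\mathcal A_t;\Gamma)$ is independent of $t$. Since this quantity is integer-valued by Theorem \ref{th:logarithmic_res}, it suffices to establish continuity in $t$.

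First I would verify that $\mathcal A_t$ is invertible on $\mathrm{ran}(\Gamma)$ for every $t \in [0,1]$. Factor
\begin{equation*}
\mathcal A_t = \mathcal A\bigl(I + t\mathcal A^{-1}\mathcal B\bigr).
\end{equation*}
By compactness of $\mathrm{ran}(\Gamma)$ and the strict inequality in the hypothesis, there exists $q < 1$ with $\|\mathcal A(\omega)^{-1}\mathcal B(\omega)\|_{\mathcal L(X)} \le q$ uniformly on $\Gamma$. Hence $I + t\mathcal A^{-1}\mathcal B$ is invertible by Neumann series for all $t \in [0,1]$, and therefore so is $\mathcal A_t$. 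The operator $\mathcal A_t$ also inherits the property of being finitely meromorphic of Fredholm type: the Laurent pole coefficients of $\mathcal B$ are finite-rank, and adding a finite-rank operator to the Fredholm order-zero part of $\mathcal A$ keeps it Fredholm.

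Applying Theorem \ref{th:logarithmic_res} with $f \equiv 1$ gives
\begin{equation*}
M(\mathcal A_t;\Gamma) \;=\; \frac{1}{2\pi i}\,\tr\!\int_\Gamma \mathcal A_t(\omega)^{-1}\,\frac{d}{d\omega}\mathcal A_t(\omega)\,d\omega.
\end{equation*}
Since $\mathcal A_t^{-1} = (I + t\mathcal A^{-1}\mathcal B)^{-1}\mathcal A^{-1}$, the uniform geometric bound from the first step shows that $\mathcal A_t^{-1}$ depends norm-continuously on $t$, uniformly for $\omega \in \Gamma$; the derivative $\mathcal A_t' = \mathcal A' + t\mathcal B'$ is affine in $t$ and norm-bounded on $\Gamma$. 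Consequently the contour integral depends norm-continuously on $t$. Deforming the contour and collecting residues at the finitely many characteristic values of $\mathcal A_t$ inside $\Gamma$, the integral is a finite sum of finite-rank residue operators, hence trace class. Since no characteristic value of $\mathcal A_t$ can cross $\Gamma$ during the homotopy (by the first step), this finite-rank structure persists and the trace varies continuously in $t$. Therefore $t \mapsto M(\mathcal A_t;\Gamma) \in \mathbb Z$ is continuous, hence constant, and evaluation at $t = 0, 1$ yields the claim.

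The main obstacle is the passage from operator-norm continuity of the contour integral to continuity of its trace, since the trace functional is not norm-continuous in general. The resolution uses the local Gohberg--Sigal perturbation theory: around each characteristic value $\omega_0$ of $\mathcal A_0$, one encloses it in a small circle $\gamma$ lying in the common invertibility region of all $\mathcal A_t$, $t \in [0,1]$ (shrinking $\gamma$ if necessary), and the local null multiplicity along $\gamma$ is preserved under small perturbations of the symbol. Summing over finitely many such local contributions inside $\Gamma$ delivers the required global trace continuity in $t$.
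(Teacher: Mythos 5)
The paper offers no proof of this statement --- it is quoted verbatim from Gohberg--Sigal --- so your proposal has to stand on its own, and its overall strategy (the homotopy $\mathcal A_t=\mathcal A+t\mathcal B$, invertibility on $\Gamma$ via the Neumann series, integer-valuedness plus continuity of $t\mapsto M(\mathcal A_t;\Gamma)$) is indeed the standard route. The problem is the execution of the one step you yourself identify as the crux: continuity of the trace. You propose to enclose each characteristic value $\omega_0$ of $\mathcal A_0$ in a small circle $\gamma$ ``lying in the common invertibility region of all $\mathcal A_t$, $t\in[0,1]$''. No such circle exists in general: the bound $\|\mathcal A(\omega)^{-1}\mathcal B(\omega)\|\le q<1$ is available only on (a neighbourhood of) $\ran(\Gamma)$, so the characteristic values of $\mathcal A_t$ cannot cross $\Gamma$, but they are otherwise free to wander through all of $\mathrm{Int}(\Gamma)$ as $t$ runs over $[0,1]$; for $t$ bounded away from $0$ they need not remain near the characteristic values of $\mathcal A_0$ and may land on or inside any fixed small $\gamma$. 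Moreover, the claim that ``the local null multiplicity along $\gamma$ is preserved under small perturbations of the symbol'' is precisely a local form of the Rouch\'e theorem being proved, so as written the argument is circular. A correct completion needs a genuine piece of the Gohberg--Sigal machinery: either (i) the local factorization $\mathcal A_t(\omega)=E(\omega)D(\omega)F(\omega)$ near each singular point, with $E,F$ holomorphic and invertible and $D$ diagonal with finite-rank, power-type entries, which exhibits $\int_\gamma\mathcal A_t^{-1}\mathcal A_t'\,\dd\omega$ as a finite-rank operator plus a zero-trace remainder and reduces its trace to a scalar winding number; or (ii) differentiation in $t$, writing $\frac{\dd}{\dd t}\tr\int_\Gamma\mathcal A_t^{-1}\mathcal A_t'\,\dd\omega=\tr\int_\Gamma\partial_\omega\bigl(\mathcal A_t^{-1}\mathcal B\bigr)\,\dd\omega=0$, with the cyclicity of the trace justified through that same finite-rank structure. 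Either way, the missing ingredient is substantive, not a routine compactness argument.

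A secondary gap: your justification that $\mathcal A_t$ remains finitely meromorphic of Fredholm type only addresses the pole coefficients. At a regular point the zeroth Laurent coefficient of $\mathcal B$ is a general bounded operator, and adding it to the Fredholm coefficient $A_0$ of $\mathcal A$ need not produce a Fredholm operator; ``finite rank plus Fredholm is Fredholm'' does not cover this. The Fredholm property of $\mathcal A+t\mathcal B$ on $\mathrm{Int}(\Gamma)$, together with its invertibility at all but finitely many interior points (which is required for $M(\mathcal A_t;\Gamma)$ to be defined at all, via the meromorphic Fredholm theorem), must be assumed or argued separately.
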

\subsection{Proof of the main results} 
The following lemma serves as  a first preparation. 
\begin{lemma}
	Let $\beta > 0$. Then for $\omega \in \Psi_\beta$ the function 
	$$ \Psi_\beta \ni \omega \mapsto \mathcal Q(\ell, \omega) \in \mathcal L( H^{1/2}_0(\overline{\Sigma}); H^{-1/2}(\Sigma)) $$
	is a finitely meromorphic function of Fredholm type. 
\end{lemma}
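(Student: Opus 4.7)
The plan is to apply the perturbation formula from Theorem \ref{th:perturbation_K_D}(3), which exhibits the entire $\omega$-dependence of $D_\omega$ through a single factor $R_\varnothing(\omega)$, thereby reducing the claim to the meromorphic-Fredholm properties of the free resolvent already established in Theorem \ref{th:cont_resolvent_A_Sigma}. Fix $\eta \in \mathbb{H}_+$ (to be specialised below) and decompose
\begin{equation*}
\mathcal Q(\ell, \omega) = \mathcal Q^\ell_\eta + \mathcal K^\ell_\eta (\omega),
\end{equation*}
with $\mathcal Q^\ell_\eta := T_\ell^* r_{\Sigma_\ell} D_{-\overline\eta} e_{\Sigma_\ell} T_\ell$ independent of $\omega$ and
\begin{equation*}
\mathcal K^\ell_\eta(\omega) := -(\omega^2 - \overline\eta^2)\, T_\ell^* r_{\Sigma_\ell} K^*_\eta \bigl(I + (\omega^2 - \eta^2) R_\varnothing(\omega)\bigr) K_\eta e_{\Sigma_\ell} T_\ell .
\end{equation*}

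First I would verify that $\mathcal Q^\ell_\eta$ is Fredholm between $H^{1/2}_0(\overline{\Sigma^*})$ and $H^{-1/2}(\Sigma^*)$. Specialising to $\eta = it$ with $t>0$ sufficiently large, the form $\mathfrak a + t^2$ is uniformly coercive on $H^1(\Omega)$, so a Green's identity plus a Steklov-type argument (see e.g.\ \cite{McLean} in the Lipschitz setting) shows that $D_{it} : H^{1/2}(\partial \Omega) \to H^{-1/2}(\partial \Omega)$ is bounded and satisfies $\scal{D_{it} g}{g}_{\partial \Omega} \gtrsim \|g\|^2_{H^{1/2}(\partial \Omega)}$. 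Zero extension $e_{\Sigma_\ell}: H^{1/2}_0(\overline{\Sigma_\ell}) \to H^{1/2}(\partial \Omega)$ is bounded by Lipschitz regularity of $\Sigma_\ell$, and via the pairing \eqref{eq:dual_pairing} this coercivity descends to $r_{\Sigma_\ell} D_{it} e_{\Sigma_\ell}$. Lax--Milgram then yields an isomorphism, preserved under the unitary conjugation with $T_\ell$, so $\mathcal Q^\ell_{it}$ is in fact an isomorphism, hence Fredholm of index zero.

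Next I would show that $\omega \mapsto \mathcal K^\ell_\eta(\omega)$ is finitely meromorphic on $\Psi_\beta$ with \emph{compact} operator values. By the local regularity argument presented after Theorem \ref{th:perturbation_K_D}, the operators
$K_\eta e_{\Sigma_\ell} T_\ell : H^{1/2}_0(\overline{\Sigma^*}) \to L_{2, -\beta}(\Omega)$ and $T_\ell^* r_{\Sigma_\ell} K^*_\eta : L_{2, \beta}(\Omega) \to H^{1/2}(\Sigma^*)$ are bounded. Theorem \ref{th:cont_resolvent_A_Sigma} applied with $\Sigma = \varnothing$, combined with the meromorphic Fredholm theorem invoked in its proof, shows that $R_\varnothing(\omega) : L_{2, -\beta}(\Omega) \to L_{2, \beta}(\Omega)$ is itself finitely meromorphic of Fredholm type, so in particular its Laurent coefficients of negative order are of finite rank. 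Post-composing with the compact Rellich embedding $H^{1/2}(\Sigma^*) \hookrightarrow H^{-1/2}(\Sigma^*)$ yields that $\mathcal K^\ell_\eta(\omega)$ takes values in the compact operators from $H^{1/2}_0(\overline{\Sigma^*})$ to $H^{-1/2}(\Sigma^*)$ and inherits the finite-rank property of its Laurent coefficients at each pole.

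Combining both parts with $\eta = it$, the Laurent expansion of $\mathcal Q(\ell, \omega)$ at any $\omega_0 \in \Psi_\beta$ has finite-rank principal part (inherited entirely from $\mathcal K^\ell_{it}$), while its constant coefficient $A_0 = \mathcal Q^\ell_{it} + (\text{compact})$ is a Fredholm operator; this is precisely the structure required in \eqref{eq:Laurent_exp_finite_mero}. The main expected obstacle is the coercivity step for the \emph{truncated} operator: one has to verify that the zero extension of an $H^{1/2}_0(\overline{\Sigma_\ell})$ function genuinely lies in $H^{1/2}(\partial \Omega)$ with controlled norm, and that the duality pairing \eqref{eq:dual_pairing} is compatible with this extension. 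Both points rely on the Lipschitz regularity of $\Sigma$ together with its strict containment in the smooth portion of $\partial \Omega$, which is already part of the standing hypotheses and should therefore go through.
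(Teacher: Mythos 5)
Your argument is correct and shares the paper's overall architecture (an $\omega$-independent invertible reference operator plus a finitely meromorphic compact perturbation), but the two sub-claims are substantiated by genuinely different means. For the compactness, the paper invokes Theorem \ref{th:psdo}: $D_{\Sigma,\omega}-D_{\Sigma,-\overline{\eta}}$ is a pseudo-differential operator of lower order, hence compact on $L_2(\Sigma)$ for $\omega,\eta\in\mathbb{H}_+$, and this is propagated to all of $\Psi_\beta$ by analytic continuation together with the norm-closedness of the compact operators; you instead read the compactness off the explicit factorisation $\mathcal K^\ell_\eta(\omega)=-(\omega^2-\overline{\eta}^2)\,T_\ell^* r_{\Sigma_\ell}K_\eta^*(\cdots)K_\eta e_{\Sigma_\ell}T_\ell$ from Theorem \ref{th:perturbation_K_D}(3), using the local smoothing of $r_\Sigma K_\eta^*$ near the window and the compact embedding $H^{1/2}(\Sigma^*)\hookrightarrow H^{-1/2}(\Sigma^*)$. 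Your route has the advantage of avoiding the continuation-of-compactness step and of making the finite rank of the negative Laurent coefficients transparent (they are inherited directly from those of $R_\varnothing(\omega)$); the paper's route is shorter because it outsources the analysis to the Boutet-de-Monvel calculus. For the reference operator, the paper simply asserts that $D_{\Sigma,-\overline{\eta}}$ is bijective, whereas you supply a proof via coercivity of $D_{it}$ for large $t>0$ and Lax--Milgram on $H^{1/2}_0(\overline{\Sigma_\ell})$ with the duality \eqref{eq:dual_pairing}; this fills a gap the paper leaves implicit and in fact yields index zero, which is more than Fredholmness. The only point to watch in your version is the bookkeeping of the weighted spaces in the middle factor $I+(\omega^2-\eta^2)R_\varnothing(\omega)$: the identity term should be handled separately (as the $\omega$-independent bounded operator $K_\eta^*K_\eta$), since $r_\Sigma K_\eta^*$ is continuous on $L_{2,\beta}(\Omega)$ but not on $L_{2,-\beta}(\Omega)$; once split this way, everything goes through.
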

\begin{proof}
	It easily follows that $\omega \mapsto R_{\varnothing}$,  $\omega \mapsto  K_\omega$ and  $\omega \mapsto  D_\omega = \gamma_1 K_\omega$ are all finitely meromorphic functions. Then $\omega \mapsto \mathcal Q(\ell, \omega)$ will be also finitely meromorphic and we only have to show that it is of Fredholm type. We use that the operator
	$$D_{\Sigma, \omega} - D_{\Sigma, - \overline{\eta}} : L_2(\Sigma) \to L_2(\Sigma)  $$ is compact for all  $\omega, \eta  \in \mathbb H_+$, which follows from Theorem \ref{th:psdo}. As the compact operators form a closed subset it follows by analytic continuation that $D_{\Sigma, \omega} - D_{\Sigma, - \overline{\eta} }$ is compact for each $\omega \in \Psi_\beta$. As $D_{\Sigma, - \overline{\eta}}$ is bijective the operator family $\omega \mapsto \mathcal Q(\ell, \omega)$ has to  be of  Fredholm type. 
\end{proof}
Now we want to prove Theorem \ref{th:main_discrete_eigenvalues}. The main ingredient will be the  perturbation formula given in Theorem \ref{th:perturbation_K_D},  
$$  D_\omega =  D_{-\overline{\eta}} -   ( \omega^2 - {\overline{\eta}}^2)  K^*_{\eta} ( I + (\omega^2 - \eta^2) 
 		  R_\varnothing (\omega)  ) K_{\eta}  $$
 which holds a priori only for $\eta, \omega  \in \mathbb{H}_+$. Using the remarks after Theorem \ref{th:perturbation_K_D} this formula still holds true for all $\omega \in \Psi_\beta$ if $\beta$ is small enough. Here and subsequently let $\eta \in i\RR$. Then we have 
\begin{align}\label{eq:perurbation_DtoN_scaled}
	\mathcal Q(\ell, \omega)  =  \mathcal Q(\ell, \eta) - ( \omega^2 - \eta^2)  T_{\ell}^*  r_{\Sigma_\ell} K^*_{\eta} ( I + ( \omega^2 - \eta^2) R_{\varnothing}(\omega) )  K_{\eta}  e_{\Sigma_\ell} T_\ell ,
\end{align}
Thus, the function $\mathcal Q(\ell, \cdot)$ will be singular at most at resonance points of $A_\varnothing$. Now let $\lambda_0 \in \Psi_\beta \backslash  \{ \Lambda_j \} $ be a resonance of order $1$ for $A_{\varnothing}$. We assume that $\lambda_0 \neq 0$. Then    $R_\varnothing(\cdot)$ has a pole in $\lambda_0$ and we have 
$$ 1 =  \dim  \ran \frac{1}{2\pi i} \int_{|\omega - \lambda_0| = \varepsilon } R_\varnothing(\omega) \; 
	\dd \omega .  $$ 
\begin{theorem}\label{th:residue_resolvent}
For $\lambda_0 \neq 0$ we have 
\begin{equation}\label{eq:singularity_resovlent}	 
R_{\varnothing}(\omega) = \frac{1}{2\lambda_0} \cdot \frac{\Pi_0}{\lambda_0 - \omega} + \mathcal O(1) \qquad \text{as } \omega \to \lambda_0  , 
\end{equation}
where $\Pi_0$ is a rank-one projection. The integral kernel of $\Pi_0$ satisfies $\Pi_0 (x,\tilde x) = (u_0 \otimes u_0)(x,\tilde x)$, where $u_0$
is suitably normalised outgoing solution of 
$$ (A( x,\nabla_x ) - \lambda_0^2) u = f  \text{ in } \Omega , \qquad \gamma_0 u = 0  \text{ on } \partial \Omega .  $$
\end{theorem}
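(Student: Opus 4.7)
The plan is to identify the principal part of the Laurent expansion of $R_\varnothing$ at $\lambda_0$ by combining three ingredients: the resolvent equation, reciprocity of the kernel, and the rank one assumption encoded in the simplicity of the resonance. By Theorem \ref{th:cont_resolvent_A_Sigma} applied with $\Sigma = \varnothing$, the function $\omega \mapsto R_\varnothing(\omega) \in \mathcal L(L_{2,-\beta}(\Omega), H^1_\beta(\Omega))$ is meromorphic on $\Psi_\beta$, and the assumption that $\lambda_0$ is a simple resonance means that its pole there is of order one with a rank one residue. I would therefore write
$$ R_\varnothing(\omega) = \frac{B_{-1}}{\omega - \lambda_0} + B_0 + (\omega - \lambda_0) B_1 + \cdots $$
and insert this into $(A(x,\nabla_x) - \omega^2) R_\varnothing(\omega) = \mathrm{Id}$ on $L_{2,-\beta}(\Omega)$. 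Expanding $\omega^2 = \lambda_0^2 + 2\lambda_0(\omega - \lambda_0) + (\omega - \lambda_0)^2$ and reading off the coefficient of $(\omega - \lambda_0)^{-1}$ yields $(A - \lambda_0^2) B_{-1} = 0$, so the range of $B_{-1}$ is contained in the space of outgoing solutions of the homogeneous Dirichlet problem in the sense of Theorem \ref{th:cont_outgoing}. Simplicity forces this range to be one-dimensional, so $B_{-1} = u_0 \otimes \ell$ for an outgoing resonance state $u_0$ and a bounded linear functional $\ell$ on $L_{2,-\beta}(\Omega)$.

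Next I would exploit reciprocity of the kernel. Because the coefficients of $A(x,\nabla_x)$ are real and the operator is formally self-adjoint, the Schwartz kernel satisfies $R_\varnothing(\omega)(x,\tilde x) = R_\varnothing(\omega)(\tilde x, x)$ for all $\omega \in \mathbb H_+$; both sides extend meromorphically to $\Psi_\beta$, so the identity survives under analytic continuation. Applied to the residue, this gives $B_{-1}(x,\tilde x) = B_{-1}(\tilde x, x)$, i.e.\ $u_0(x)\,\ell(\tilde x) = u_0(\tilde x)\,\ell(x)$, whence $\ell = c\, u_0$ as a functional (via the unconjugated bilinear pairing $f \mapsto \int u_0 f\,\dd x$). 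Hence $B_{-1}(x,\tilde x) = c\, (u_0 \otimes u_0)(x,\tilde x)$ for some scalar $c$.

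Finally I would pin down the constant. The factor $1/(2\lambda_0)$ appears simply because $R_\varnothing$ depends on $\omega$ through $\omega^2$: the natural residue lives in the spectral variable $z = \omega^2$, and the change of variable contributes $(\omega + \lambda_0)^{-1}\big|_{\omega = \lambda_0} = 1/(2\lambda_0)$. Choosing $u_0$ so that $c = 1/(2\lambda_0)$ in the formula above, which amounts to normalising $u_0$ by a suitable bilinear condition (in particular one that makes $u_0 \otimes u_0$ idempotent, so that $\Pi_0$ is a genuine projection), gives the stated expansion.

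The main obstacle I anticipate is transporting the reciprocity identity across the analytic continuation in a mathematically honest way: $B_{-1}$ is given as an operator between weighted Sobolev spaces, so justifying the kernel-theoretic manipulation $B_{-1}(x,\tilde x) = u_0(x)\,\ell(\tilde x)$ requires interpreting $B_{-1}$ as a continuous bilinear form on $C_c^\infty(\Omega) \times C_c^\infty(\Omega)$ and checking that reciprocity for the meromorphic family $R_\varnothing(\omega)$, known classically for $\omega \in \mathbb H_+$, passes to the residue in $\Psi_\beta$. The remaining subtlety is to verify that, at a simple (non-threshold) resonance, the dimension of the space of outgoing solutions of $(A - \lambda_0^2)u = 0$ with Dirichlet data on all of $\partial \Omega$ is exactly one, so that $u_0$ is determined up to scaling by $\mathrm{ran}\, B_{-1}$; this follows from Theorem \ref{th:equivalence_resonances} applied to $\Sigma = \varnothing$ together with the rank one hypothesis.
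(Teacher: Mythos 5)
Your proposal is correct and follows essentially the same route as the paper, which simply defers to the argument of Dyatlov--Zworski (Theorem 2.4) and flags exactly the point you identify: the real-valuedness of the coefficients gives the reciprocity $R_\varnothing(\omega)(x,\tilde x) = R_\varnothing(\omega)(\tilde x, x)$, which survives analytic continuation and forces the rank-one residue to have the symmetric form $c\,(u_0\otimes u_0)$, with the factor $1/(2\lambda_0)$ coming from the quadratic dependence on the spectral parameter and the remaining constant absorbed into the normalisation of $u_0$.
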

The proof follows as in \cite[Theorem 2.4] {DyatlovZworski}. Note that one has to use that the coefficients of $A(x,\nabla_x)$ are real-valued.
If  $\omega^2$ is a discrete eigenvalue of $A_{\varnothing}$ then we may choose $u_0$ as any real-valued normalised eigenfunction. 
From \eqref{eq:perurbation_DtoN_scaled} it follows that $\lambda_0$ is a pole of $\mathcal Q(\ell, \omega)$ if and only if 
$r_{\Sigma_\ell} K^*_{\eta} \Pi   K_{\eta}  e_{\Sigma_\ell} \neq 0$. 
We have 
$$ \scal{T_\ell^*  r_{\Sigma_\ell} K^*_{\eta} \Pi   K_{\eta}  e_{\Sigma_\ell} T_\ell g}{h} = \scal{T_\ell^* K^*_{\eta} u_0 }{h} \cdot \scal{K_{\eta} T_\ell  g}{\overline{u_0}}  , $$
where  $u_0$ be chosen as  in Theorem \ref{th:residue_resolvent}. We have
$$ r_{\Sigma_\ell} K_{\eta}^* u_0 = r_{\Sigma_\ell} \gamma_1  R( \eta) u_0 
= (\lambda_0^2 - \eta^2)^{-1} r_{\Sigma_\ell} \gamma_1  u_0 . $$
Note that $(A(x,\nabla_x) - \lambda_0^2) u_0 = 0$ and $\gamma_0 u = 0$ on $\Sigma_\ell$. Then the unique continuation principle (see e.g. \cite{Tataru}) implies that 
\begin{equation}\label{eq:normal_der_not_vanishing}
	C^\infty(\overline{\Sigma^*_\ell}) \ni  \phi_{\ell} := r_{\Sigma_\ell} \gamma_1 u_0 \neq 0 .
\end{equation}
Finally, 
\begin{align*}
	\scal{T_\ell^*  r_{\Sigma_\ell} K^*_{\eta} \Pi   K_{\eta}  e_{\Sigma_\ell} T_\ell g}{h}
		&= (\lambda_0^2 - \eta^2)^{-2}  \scal{T_\ell^* \phi_\ell }{h} \cdot  \scal{g}{\overline{T_\ell^* \phi_\ell}},
\end{align*}
and thus, $\lambda_0$ is a pole of $\mathcal Q(\ell, \cdot)$. Choosing $\varepsilon > 0$ small enough we observe that 
$ \omega \mapsto \mathcal Q(\ell , \omega  )$ is holomorphic for  $ \omega \in B(\lambda_0,  2\varepsilon) \backslash \{\lambda_0\}$. Let 
$\Gamma (t) := \lambda_0 + \varepsilon  e^{2\pi i t}$. We use Rouch\'e's theorem to show that 
$$ M(\mathcal Q(\ell, \cdot)  ; \Gamma) = 0 . $$ 
Then there exists a unique $\lambda(\ell) \in B(\lambda_0, \varepsilon)$ such that $\ker \mathcal Q(\ell, \lambda(\ell)) \neq \{0\}$. Let 
$\mathcal Q_0$ be chosen such that 
$$   \left\|  \ell \mathcal Q(\ell, \eta) -  \mathcal Q_0 \right\| = \mathcal O(\ell )  \quad \text{as } \ell \to 0  , $$
cf. Theorem \ref{th:psdo_expansion}. Using Formula \ref{eq:perurbation_DtoN_scaled} together with an estimate on the perturbation term we obtain that 
$$ \left\| \ell \mathcal Q(\ell, \omega) - \mathcal Q_0 \right\|  = \mathcal O(\ell) $$
uniformly in $\omega \in \partial B(\lambda_0, \varepsilon)$ for sufficiently small $\varepsilon > 0$. Recall that  we have $K_{\eta_0}  e_{\Sigma_\ell} T_\ell : L_2(\Sigma_\ell) \to L_{2,-\beta}(\Omega)$  and $r_{\Sigma_\ell} K^*_{\eta_0} : L_{2,\beta} (\Omega) \to L_2(\Sigma_\ell)$ continuously. Now Rouch\'e's Theorem implies  that 
$$ M(\ell \mathcal Q(\ell, \cdot) ; \Gamma) = M(\mathcal Q_0 ; \Gamma) = 0  $$
for sufficiently small $\ell > 0$. This proves the existence of a unique resonance $\lambda(\ell)$ of $A_{\Sigma_\ell}$ near $\lambda_0$. Next we want to prove  the asymptotic formula. Using Theorem \ref{th:logarithmic_res} we have 
\begin{align*}
	\lambda (\ell) - \lambda_0 = \frac{1}{2\pi i}  \tr 
			\int_{| \omega - \lambda_0 | = \varepsilon} (\omega - \lambda_0) \mathcal Q(\ell, \omega)^{-1} \frac{\dd}{\dd \omega} \mathcal Q(\ell, \omega) \; \dd  \omega	. 
\end{align*}
Let $\mathcal R(\ell, \omega) := \mathcal Q(\ell, \omega) -  \ell^{-1}  \mathcal Q_0  $. Then we have 
$\| \mathcal R(\ell, \omega)\| =  \mathcal O(1)$ and we obtain 
\begin{align*}
 	\mathcal Q(\ell, \omega)^{-1} &= \left( \ell^{-1}  \mathcal Q_0  + \mathcal R(\ell, \omega) \right)^{-1} 
 	=  \sum_{k=0}^\infty (-1)^k \ell^{k+  1}  \left( \mathcal Q_0^{-1}  \mathcal R(\ell, \omega)\right)^{k} \mathcal Q_0^{-1} ,
\end{align*}
where the sum converges for sufficiently small $\ell >0$. Since $\mathcal R(\ell, \cdot)$ is meromorphic we have 
$$ \mathcal R(\ell, \omega) =  \sum_{k=-1}^\infty (\omega - \lambda_0)^k  \mathcal R_k(\ell)  $$
for operators $\mathcal R_k(\ell) : H^{1/2}_0(\overline{\Sigma^*}) \to H^{-1/2}(\Sigma^*)$, $k \ge -1$. 
Note that  $\mathcal R_{-1}$ is a rank-one operator. Then we have  
\begin{align}\label{eq:lambda(ell)-lambda_0}
	\lambda (\ell) - \lambda_0 \notag
 	&= \frac{1}{2\pi i} \tr 
 			\int_{| \omega - \lambda_0 | = \varepsilon} (\omega - \lambda_0) \mathcal Q(\ell, \omega)^{-1}
 		\frac{\dd}{\dd \omega} \mathcal R(\ell, \omega) \; \dd \omega \\
	&= \tr \sum_{k=0}^\infty (-1)^k \ell^{k+1}  \mathcal B_k (\ell) , 
\end{align}
where 
$$ \mathcal B_k(\ell)  := \sum_{\substack{\alpha_1 + \ldots +  \alpha_{k+1} = - 1 \\ \alpha_i \ge -1 }}
\alpha_{k+1} \cdot \mathcal Q_0^{-1} \mathcal   R_{\alpha_1}(\ell)  \ldots \mathcal R_{\alpha_k} \mathcal Q_0^{-1}
\mathcal R_{\alpha_{k+1}} (\ell) .
$$
Next we want to  interchange the trace with the summation. Note that the sum in \eqref{eq:lambda(ell)-lambda_0} converges in the operator norm. Moreover, the $\mathcal B_k(\ell)$ are all rank-one operators since $\mathcal R_{-1}(\ell)$ is a rank-one operator. 
Thus, the operator norm of $\mathcal B_k(\ell)$ coincide with its trace norm and  we obtain   
\begin{align*}
	\lambda (\ell) - \lambda_0 &= \sum_{k=0}^\infty (-1)^k \ell^{k+1}  \tr   \mathcal B_k (\ell) . 
\end{align*}
\begin{lemma}
	There exists constants $c, d >0$ such that for all $k \ge 0$ we have 
	$\| \mathcal B_k (\ell)\| \le c d^k  \ell^{d-1} $ as $\ell \to 0$.
\end{lemma}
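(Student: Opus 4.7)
The approach is to separate the rank-one polar coefficient $\mathcal R_{-1}(\ell)$ of $\mathcal R(\ell,\cdot)$ at $\lambda_0$ from the regular Taylor coefficients $\mathcal R_k(\ell)$, $k \ge 0$, estimate each separately, and then reduce the bound on $\|\mathcal B_k(\ell)\|$ to a combinatorial counting argument.

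\emph{Polar part.} From the computation carried out in the text just before the lemma, $\mathcal R_{-1}(\ell)$ is the rank-one operator
\[ g \mapsto c_0\, T_\ell^*\phi_\ell \cdot \scal{g}{\overline{T_\ell^*\phi_\ell}}_{\Sigma^*}, \qquad \phi_\ell = r_{\Sigma_\ell}\gamma_1 u_0, \]
with $c_0 \neq 0$ a constant depending only on $\lambda_0$ and $\eta$. Its operator norm as a map $H^{1/2}_0(\overline{\Sigma^*}) \to H^{-1/2}(\Sigma^*)$ equals $|c_0|\,\|T_\ell^*\phi_\ell\|_{H^{-1/2}(\Sigma^*)}^2$. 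Since $T_\ell$ is unitary on $L_2$ and $L_2(\Sigma^*) \hookrightarrow H^{-1/2}(\Sigma^*)$ continuously on the bounded set $\Sigma^*$, I bound
\[ \|T_\ell^*\phi_\ell\|_{H^{-1/2}(\Sigma^*)} \le C\,\|\phi_\ell\|_{L_2(\Sigma_\ell)} \le C'\,\ell^{(d-1)/2}, \]
using smoothness of $\gamma_1 u_0$ near $s_0$ and $|\Sigma_\ell| = O(\ell^{d-1})$. Hence $\|\mathcal R_{-1}(\ell)\| \le C\,\ell^{d-1}$.

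\emph{Regular part.} Choose $\rho > 0$ so small that $\overline{B(\lambda_0, 2\rho)}$ contains no singularity of $\mathcal Q(\ell,\cdot)$ or resonance of $A_\varnothing$ other than $\lambda_0$. Writing $\mathcal R(\ell,\omega) = \mathcal Q(\ell,\omega) - \ell^{-1}\mathcal Q_0$ via the perturbation identity \eqref{eq:perurbation_DtoN_scaled} and subtracting the rank-one pole of $R_\varnothing(\omega)$, Theorem~\ref{th:psdo_expansion} (applied at $\eta$, noting that the principal symbol $p_{-1}$ is $\omega$-independent since it depends only on the principal symbol of $A(x,\nabla_x)$) combined with $\ell$-uniform mapping bounds for $K_\eta e_{\Sigma_\ell} T_\ell$ and $T_\ell^* r_{\Sigma_\ell} K_\eta^*$ yields $\sup_{|\omega - \lambda_0| = \rho}\|\mathcal R(\ell,\omega)\| \le C$ uniformly in $\ell$. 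For $k \ge 0$, Cauchy's formula
\[ \mathcal R_k(\ell) = \frac{1}{2\pi i} \oint_{|\omega - \lambda_0| = \rho} \frac{\mathcal R(\ell,\omega)}{(\omega - \lambda_0)^{k+1}}\,\dd\omega \]
then gives $\|\mathcal R_k(\ell)\| \le C\,\rho^{-k}$ uniformly in $\ell$.

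\emph{Combinatorial summation.} A tuple $(\alpha_1,\ldots,\alpha_{k+1})$ with $\alpha_i \ge -1$ and $\sum \alpha_i = -1$ contains some number $j \ge 1$ of entries equal to $-1$, while the remaining $k+1-j$ non-negative entries sum to $j-1$; the substitution $\beta_i := \alpha_i + 1$ shows that the total number of such tuples is $\binom{2k}{k} \le 4^k$. Setting $K := \|\mathcal Q_0^{-1}\|$ and combining the previous two steps, each summand of $\mathcal B_k(\ell)$ is bounded by $(k+1)\,K^{k+1}\,(C\ell^{d-1})^j\,\rho^{-(j-1)}$. For $\ell$ sufficiently small the $j = 1$ contribution dominates, and summing over tuples yields $\|\mathcal B_k(\ell)\| \le c\,d^k\,\ell^{d-1}$ for suitable constants $c, d > 0$.

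The main obstacle is the uniformity required in the second step: Theorem~\ref{th:psdo_expansion} is stated pointwise in the spectral parameter, while the Cauchy estimates demand control along the entire circle $|\omega - \lambda_0| = \rho$. This uniformity reduces to tracking the $\omega$-dependence in the perturbation identity \eqref{eq:perurbation_DtoN_scaled} and to establishing $\ell$-uniform mapping bounds for $K_\eta e_{\Sigma_\ell} T_\ell$ and $T_\ell^* r_{\Sigma_\ell} K_\eta^*$, which should follow from the local smoothness of $K_\eta$ near $s_0$ via a rescaling argument of the same flavour as the one underlying Theorem~\ref{th:psdo_expansion}.
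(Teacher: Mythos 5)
Your proposal is correct and follows essentially the same route as the paper: the rank-one structure of $\mathcal R_{-1}(\ell)$ gives $\|\mathcal R_{-1}(\ell)\| \le C\|\phi_\ell\|_{L_2(\Sigma_\ell)}^2 = \mathcal O(\ell^{d-1})$, the Cauchy integral formula on a fixed circle gives $\|\mathcal R_k(\ell)\| \le C\varepsilon^{-(k+1)}$ uniformly in $\ell$, and the stars-and-bars count $\binom{2k}{k}\le 4^k$ closes the argument exactly as in the text. The uniformity of $\|\mathcal R(\ell,\omega)\|=\mathcal O(1)$ on the circle, which you flag as the main obstacle, is indeed taken from the discussion preceding the lemma rather than re-proved inside it, so your treatment is if anything slightly more careful on that point.
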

\begin{proof} 
For $k\ge 0$ we have
\begin{align*}
	\mathcal R_k(\ell) = \frac{1}{2\pi i }
\int_{| \omega - \lambda_0| = \varepsilon} \frac{ \mathcal R(\ell,\omega)}{(\omega - \lambda_0)^{k+1}} \; \dd \omega , 
\end{align*}
and thus, $\| \mathcal R_{k}(\ell)\| \le C \varepsilon^{-(k+1)}$ for $C$ independent of $\ell$. In the case $k =-1$ we obtain for $g, h \in H^{1/2}(\Sigma^*)$ that 
\begin{align*}
	\scal{\mathcal R_{-1} (\ell) g}{h} 
	=  \frac{1}{2\lambda_0} \scal{T_\ell^* \phi_\ell }{h} \cdot  \scal{g}{\overline{T_\ell^* \phi_\ell}} , 
\end{align*}
where we have set $\phi_{\ell} = r_{\Sigma_\ell} K_{\eta_0}^* u_0 \in C^\infty(\overline{\Sigma_\ell}) $. Then we obtain 
$ \| \mathcal R_{-1} (\ell) \| \le  \| \phi_\ell\|_{L_2(\Sigma_\ell)}^2  \le C  \ell^{d-1}$, and finally
\begin{align*}
 	\| \mathcal B_k(\ell) \| &\le (k-1)  C^{k+1} \varepsilon^{-k}  \ell^{d-1}   \| \mathcal Q_0^{-1}\|^{k+1}
 	\cdot \# \{ \alpha  \in \NN_0^{k+1} :  | \alpha| = k\}. 
\end{align*}
Note that 
$$ 
 \# \{\alpha \in \NN_0^{k+1} : |\alpha| = k\}  =\binom{ 2 k}{k}  = \frac{(2k)!}{k! k!} \le  4  \binom{2(k-1)}{k-1} \le  \ldots \le 4^k . $$ 
This proves the lemma.
\end{proof}
Finally, we have $\lambda (\ell) - \lambda_0 = \ell \tr   \mathcal B_0 (\ell) + \mathcal O(\ell^{d+1})$, where $\mathcal B_0 =  - \mathcal Q_0 \mathcal R_{-1}(\ell)$. Since 
\begin{equation*}
	\scal{\mathcal B_0(\ell)g}{h}  = - \frac{1}{2\lambda_0} \scal{\mathcal Q_0^{-1} T_\ell^* \phi_\ell }{h} \cdot  \scal{g}{\overline{T_\ell^* \phi_\ell}} 
\end{equation*}
we obtain 
\begin{equation}\label{eq:trace_B_0}
	 \tr \mathcal B_0(\ell) = -(2\lambda_0)^{-1} \scal{\mathcal Q_0^{-1} T_\ell^* \phi_\ell }{\overline{T_\ell^* \phi_\ell}}.  
\end{equation}
Note that 
\begin{align}\label{eq:asympt_phi_ell}
T_\ell^* \phi_\ell (t) := \ell^{(d-1)/2} \frac{\phi_\ell ( \kappa^{-1} (\ell t  ) )}{\sqrt{\alpha(\ell t)}}  = 
	\ell^{(d-1)/2} \frac{\gamma_1 u(\kappa(0))}{\sqrt{\alpha(0)}} + \mathcal O(\ell^{(d+1)/2} )  . 
\end{align}
Let  $\mathbf{1} \in L_2(\Sigma^*)$ denote the constant function. Setting $s_0 := \kappa(0)$  we obtain  
$$ 
	\tr \mathcal B_0(\ell) = - \frac{\gamma_1 u(s_0)^2 \cdot \scal{\mathcal Q_0^{-1}  \mathbf 1}{\mathbf{1} }}{2 \lambda_0   \alpha(0) } \ell^{d}  + \mathcal O(\ell^{d+1}),
$$
which proves Theorem \ref{th:main_discrete_eigenvalues} with
\begin{align}\label{eq:const_nu}
	\nu := \frac{\scal{\mathcal Q_0^{-1}  \mathbf 1}{\mathbf{1} }}{2 \lambda_0  \alpha(0)}  = 
	\frac{\scal{\mathcal Q_0^{-1/2}  \mathbf 1}{\mathcal Q_0^{-1/2} \mathbf{1} }}{2 \lambda_0   \alpha(0)} > 0 .
\end{align}

Finally, we consider the case where $\lambda_0 = \Lambda_i$ is a threshold of the essential spectrum. We assume that the branching point is of second order. Thus, the functions $\zeta \mapsto R_{\varnothing} (\Lambda_i - \zeta^2)$ and $\zeta \mapsto R_{\Sigma} (\Lambda_i - \zeta^2)$
are meromorphic near $\zeta =0$. In this case we obtain the following result. Its proof follows as in \cite[Theorem 2.5 and Theorem 3.13]{DyatlovZworski}.
\begin{theorem}\label{th:residue_resolvent_branching point}
We have 
$$ R_\varnothing (\Lambda_i - \zeta^2 ) = \frac{\Pi_1}{\zeta^2} + \frac{\Pi_0}{\zeta} + \mathcal O(1) \qquad \text{as } \zeta \to 0 . $$
Here $\Pi_1 $ is a bounded operator in $L_2(\Omega)$ mapping onto  the space of square integrable solutions of
$$ (A( x,\nabla_x ) - \lambda_0^2) u = f  \text{ in } \Omega , \qquad \gamma_0 u = 0  \text{ on } \partial \Omega \backslash \overline{\Sigma} ,\qquad
	\gamma_1 u = 0  \text{ on } \Sigma  , $$ 
and the  range of $\Pi_1$ consists of possibly non-square integrable solutions of the above boundary value problem.
\end{theorem}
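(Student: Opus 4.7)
The plan is to follow the strategy of \cite[Theorem 2.5 and Theorem 3.13]{DyatlovZworski}, adapted to the waveguide setting developed here. First, I would switch to the local uniformizing variable $\zeta$ defined by $\omega = \Lambda_i - \zeta^2$. Since $\Lambda_i$ is assumed to be a branching point of order $2$, each Puiseux expansion \eqref{eq:def_xi_alpha_delta} of the characteristic values near $\Lambda_i$ has exponent $1/2$ in $\omega^2 - \Lambda_i^2$. The substitution $\omega = \Lambda_i - \zeta^2$ gives $\omega^2 - \Lambda_i^2 = \zeta^2(\zeta^2 - 2\Lambda_i)$, so $(\omega^2 - \Lambda_i^2)^{1/2} = \zeta\sqrt{\zeta^2 - 2\Lambda_i}$ is analytic in $\zeta$ near $0$. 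Consequently, the multi-valued map $\omega \mapsto R^0(\omega)$ pulls back to a single-valued meromorphic map $\zeta \mapsto R^0(\Lambda_i - \zeta^2)$ in a punctured neighbourhood of $\zeta = 0$, and the residue formula \eqref{eq:residue} shows that the worst singularity comes from the factor $1/\mu'_{k_\alpha}(\xi_{\alpha,\delta})$; since $\mu'_{k_\alpha}$ vanishes to first order at the threshold value $\xi_\alpha$ and $\xi_{\alpha,\delta}(\omega) - \xi_\alpha$ vanishes like $\zeta$, the pulled back kernel has at worst a simple pole in $\zeta$.

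Next I would combine this with the factorisation from the proof of Theorem \ref{th:cont_resolvent_A_Sigma}: $R_\varnothing(\omega) = (Q_1(\omega) + Q_2)(I + M_1(\omega) + M_2(\omega))^{-1}$. In the variable $\zeta$ the prefactor $Q_1 + Q_2$ is meromorphic with at most a simple pole at $0$, while the inverse $(I + M_1 + M_2)^{-1}$ is meromorphic of Fredholm type, so its pole at $\zeta = 0$ has order equal to the polar multiplicity, which by hypothesis (simple resonance at $\Lambda_i$) is at most $1$. Multiplying yields the asserted Laurent expansion
\begin{equation*}
    R_\varnothing(\Lambda_i - \zeta^2) = \frac{\Pi_1}{\zeta^2} + \frac{\Pi_0}{\zeta} + \mathcal O(1), \qquad \zeta \to 0,
\end{equation*}
with $\Pi_1$ of finite rank.

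To identify the coefficients I would insert the expansion into the distributional identity $(A(x,\nabla_x) - (\Lambda_i - \zeta^2)^2) R_\varnothing(\Lambda_i - \zeta^2) = I$ and expand $(\Lambda_i - \zeta^2)^2 = \Lambda_i^2 - 2\Lambda_i\zeta^2 + \zeta^4$. Matching the $\zeta^{-2}$ and $\zeta^{-1}$ terms yields $(A - \Lambda_i^2) \Pi_1 = 0$ and $(A - \Lambda_i^2) \Pi_0 = 0$, so both ranges consist of resonance solutions. The boundary conditions $\gamma_0 = 0$ on $\partial\Omega$ transfer in the same way. To distinguish them I would use the asymptotic expansion from Theorem \ref{th:asymptotics_expansion_solutions}: the image of $\Pi_1$ must lie in $H^1_{-\beta}(\Omega)$ for every $\beta > 0$ (because it arises as the limit $\zeta^2 R_\varnothing(\Lambda_i - \zeta^2) f$ of elements pre-multiplied by a factor vanishing at $\zeta = 0$), hence in $L_2(\Omega)$, whereas the image of $\Pi_0$ is only outgoing in the weaker sense \eqref{def:outgoing_solutions}, thus potentially carrying a non-decaying contribution from the characteristic values that collide at $\Lambda_i$.

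The main obstacle will be carefully tracking the exponentially weighted spaces in which the residue term of \eqref{eq:shift_path} converges as $\omega \to \Lambda_i$ along $\omega = \Lambda_i - \zeta^2$. Concretely one must verify that the principal part of $R^0(\Lambda_i - \zeta^2)$ has image in $H^1_{\beta}(\Omega)$ but the $\zeta^{-2}$ coefficient in fact lands in $L_2(\Omega)$, which requires analysing how $U_{j,+}(\omega)$ and the weight $1/\mu'_{k_\alpha}$ combine as $\mu'_{k_\alpha}(\xi_\alpha) = 0$; the symmetry $\xi_{\alpha,0}(\omega) + \xi_{\alpha,1}(\omega) = 2\xi_\alpha + \mathcal O(\zeta^2)$ from the branching, together with the fact that the coefficients of $A(x,\nabla_x)$ are real, should produce the required cancellation of the non-decaying parts in the $\zeta^{-2}$ coefficient and leave only $L_2$ contributions.
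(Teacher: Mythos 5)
Your overall strategy coincides with the paper's, which offers no proof of its own beyond the citation of \cite[Theorems 2.5 and 3.13]{DyatlovZworski}; your uniformization $\omega=\Lambda_i-\zeta^2$, the observation that $R^0(\Lambda_i-\zeta^2)$ has at worst a simple pole because $\mu_{k_\alpha}'(\xi_{\alpha,\delta}(\omega))\sim c\,\zeta$, and the identification $(A-\Lambda_i^2)\Pi_1=(A-\Lambda_i^2)\Pi_0=0$ by matching powers of $\zeta$ are all correct. The gap lies exactly in the two points you defer to the end: the order-$2$ bound on the pole and the square-integrability of $\ran\Pi_1$. Your proposed mechanism --- a cancellation between the colliding branches $\xi_{\alpha,0},\xi_{\alpha,1}$ inside the residue term of \eqref{eq:shift_path} --- is aimed at the wrong object: since $R^0(\Lambda_i-\zeta^2)$ has only a simple pole, the $\zeta^{-2}$ coefficient of $R_\varnothing$ does not come from that residue term at all, but from the product of the simple pole of $Q_1(\zeta)=\chi_0R^0(\Lambda_i-\zeta^2)\chi_1$ with a pole of $(I+M_1+M_2)^{-1}$. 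Worse, if the Fredholm inverse had only a simple pole, as you assert, then $\Pi_1$ would be the product of the two residues and hence would have range inside the range of $\mathrm{Res}_{\zeta=0}\,Q_1(\zeta)$, i.e.\ inside $\chi_0\cdot\mathrm{span}\{e^{i\xi_\alpha y}\psi(z)\}$; these functions vanish on the open set $\Omega\cap\{y<R+1\}$, so unique continuation together with $(A-\Lambda_i^2)\Pi_1=0$ would force $\Pi_1=0$. Your argument therefore cannot produce the case the theorem is actually about (an embedded eigenvalue at the threshold). Note also that ``simple resonance'' and $\Pi_1=0$ are hypotheses the paper imposes only \emph{after} this theorem, so they are not available in its proof.

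The standard route, which is the one behind the cited results, is to approach $\zeta=0$ along a ray lying over the physical sheet: for $\zeta=e^{-i\pi/4}t$, $t>0$, one has $\omega=\Lambda_i+it^2\in\mathbb{H}_+$ and $\Im(\omega^2)=2\Lambda_i|\zeta|^2$, so self-adjointness of $A_\varnothing$ gives $\|R_\varnothing(\Lambda_i-\zeta^2)\|_{L_2\to L_2}\le(2\Lambda_i)^{-1}|\zeta|^{-2}$. This single estimate simultaneously caps the order of the pole at $2$ (for any meromorphic continuation with finite-rank principal part) and shows that $\Pi_1=\lim_{t\to0}\zeta^2R_\varnothing(\Lambda_i-\zeta^2)$, taken weakly along the ray, is bounded on $L_2(\Omega)$ with range in $L_2(\Omega)$; combined with $(A-\Lambda_i^2)\Pi_1=0$ this identifies $\ran\Pi_1$ with the threshold eigenfunctions. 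No analogous bound controls the $\zeta^{-1}$ coefficient, which is why $\ran\Pi_0$ may contain the non-decaying modes. In particular, your parenthetical claim that $\Pi_1f\in H^1_{-\beta}(\Omega)$ ``because it arises as the limit of $\zeta^2R_\varnothing f$'' is a non sequitur without this quantitative resolvent bound: the limit a priori exists only in the weighted space $H^1_\beta(\Omega)$.
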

We assume that  $\Pi_1 = 0$, which means $\Lambda_i^2$ is not an embedded eigenvalue of $A_{\Sigma}$, and assume that $\Pi_0$ is one-dimensional. 
As in Theorem \ref{th:residue_resolvent} the projection $\Pi_0$ has an integral kernel  $\Pi_0(x,\tilde x) = ( u_0 \otimes u_0)(x,\tilde x)$ with some suitably chosen function $u_0$. We consider now the operator $\zeta \mapsto \mathcal Q(\ell, \Lambda_i -  \zeta^2)$. As above we obtain the existence of a unique $\zeta(\ell)$ near $\Lambda_i$ such that $\ker \mathcal Q(\ell, \Lambda_i -  \zeta^2) \neq 0$. Moreover, we have as before 
$$ \zeta (\ell )  = - \nu \cdot \gamma_1 u_0(s_0)^2  \cdot  \ell^{d}    + \mathcal O(\ell^{d+1}) ,
$$
where $\nu$ is given as before. Finally, we have  
\begin{align*}
	\lambda(\ell) = \Lambda_i - \zeta(\ell)^2 = \Lambda_i -  \nu^2 \cdot \gamma_1 u_0(s_0)^4  \cdot \ell^{2d}  + \mathcal O(\ell^{2d+1}) ,
\end{align*}
which proves Theorem \ref{th:main_threshold_case}.
\begin{remark}
Analogous results hold true in the case of several cylindrical ends. For the special case  $\Omega = \RR \times G$ we may easily calculate the behaviour of the   projections $\Pi_0$ at a branching point $\Lambda >0$. Indeed, as in Theorem \ref{th:cont_resolvent_A^0} we choose $\mu_k$ and $P_k$ such that 
$$ A^0 (\xi) = \sum_{k=1}^\infty \mu_k(\xi  )  P_k(\xi) . $$
We consider $\xi_1 , \ldots,  \xi_{n} $ and  $k_1(\xi_i), \ldots ,k_{m_i}(\xi_i)$ such that 
$\mu_{k_l(\xi_i)}(\xi_i) = \Lambda^2$ and $\mu_{k_l(\xi_i)}' (\xi_i) = 0$. Assume that  $\mu_{k_l(\xi_i)}'' (\xi_i) \neq  0$. 
We denote by $\psi_1^{(i,l)}, \ldots , \psi_{r_{i,l}}^{(i,l)} \in H^2(G)$ an orthonormal basis of $\ran P_{k_l(\xi_i)} (\xi_i)$.
From the  proof of Theorem \ref{th:cont_resolvent_A^0} it easily follows that 
\begin{align*} 
	\Pi_0(y,z,\tilde y, \tilde z) &= \pi\Lambda^{-1/2} \sum_{i=1}^{n} \sum_{l = 1}^{m_i} \sum_{p=1}^{r_{i,l}}  \rho_{i,l} \;   \frac{ e^{i \xi_i y} \psi_p^{(i,l)}(z)  e^{- i \xi_i \tilde y} \overline{\psi_p^{(i,l)}(\tilde z)}}{ \sqrt{|\mu_{k_\ell(\xi_i)}''(\xi_i) |  } }   
\end{align*}
where $\rho_{i,l} = 1$ if $\mu_{k_l(\xi_i)}''(\xi_i) > 0$ and $\rho_{i,l} = i$ if $\mu_{k_l(\xi_i)}''(\xi_i) < 0$. Note that we have $\sigma(A_0(-\xi)) = \sigma(A_0(\xi))$, and thus,  $\Pi_0$ is one-dimensional only if $n= 1$ and  $\xi_1 = 0$. If $\Pi_0$ is not a rank-one operator it also possible to prove an asymptotic formula in some cases using the symmetries of the domain and the operator, cf.\ e.g.\  \cite{HaenelWeidl2} for the elastic case. 
\end{remark}
\begin{remark}
We briefly indicate the neceassary changes in the case of matrix-valued operators. The assertions in Chapter 2,3,4 and 5.1  may easily  be adapted to elliptic systems.  In Chapter 5.2 it will be necessary to prove a corresponding unique continuation principle for elliptic systems in order to show the existence of a resonance, cf. Formula \ref{eq:normal_der_not_vanishing}. Then we  obtain as before
$$ \lambda (\ell) - \lambda_0 = \ell \tr   \mathcal B_0 (\ell) + \mathcal O(\ell^{d+1}) , $$
where $\tr   \mathcal B_0 (\ell)$ is again given as in Formula \eqref{eq:trace_B_0}. Using Formula \eqref{eq:asympt_phi_ell} a corresponding asymptotic formula may also be deduced in this case. 
\end{remark}
\section{Appendix}
\subsection{Proof of Theorem \ref{th:psdo}}
Choosing $U,V$ as in Theorem \ref{th:psdo} we  consider the boundary value problem
$$ \begin{pmatrix} A(x,\nabla_x) - \omega^2 \\ \gamma_0  \end{pmatrix} : H^s_{\mathrm{loc}} (V \cap \overline{\Omega})  \to H^{s-2}_{\mathrm{loc}} (V \cap \overline{\Omega})  \oplus  H^{s-3/2}_{\mathrm{loc}} (U)  \quad
\text{for } s > 2 . $$
Note that  boundary value problem is elliptic, and thus, there exists a corresponding parametrix (cf.\ e.g.\ \cite{Schulze}). There is  potential operator 
$$ L_\omega : H_{\mathrm{comp}}^{s-1/2}(U)  \to   H^{s}_{\mathrm{loc}}(V \cap \overline{\Omega}), \qquad s \in \RR, $$
such that $( A(x,\nabla_x) - \omega^2  ) L_\omega$ and $\gamma_0 L_\omega - \mathrm{Id}$ 
are smoothing. In particular,
\begin{align*}
	( A(x,\nabla_x) - \omega^2  ) L_\omega &: H^{1/2}_{\mathrm{comp}}(U) \to C^\infty(V \cap \overline{\Omega}),
\end{align*}
and $\gamma_0 L_\omega - \mathrm{Id} : H^{1/2}_{\mathrm{comp}}(U) \to C^\infty(U)$ are continuous. Let $\phi \in C_c^\infty(U)$ and $\chi \in C_c^\infty(V)$. Then for $g \in H^{1/2}(U)$ we have 
$$ ( A(x,\nabla_x) - \omega^2)\left(L_\omega -  K_\omega \right) \phi  g  \in C^\infty( V \cap \overline{\Omega}), \qquad  \gamma_0 \left( L_\omega  - K_\omega \right) \phi  g \in  C^\infty (U) ,  $$
and local regularity theorems imply   $\chi \left( L_\omega -  K_\omega \right) \phi : H^{1/2}(U)\to C^\infty(V \cap \overline{\Omega})$ continuously. 
We consider $(\chi \left( L_\omega -  K_\omega \right) \phi)^*$ which maps $L_2(V \cap \overline{\Omega})$ into $H^{-1/2} (U)$.  From Theorem \ref{th:perturbation_K_D} we obtain $  ( \chi  K_\omega \phi)^* = \phi \gamma_1 R_{\varnothing}(- \overline{\omega}) \chi $. 
Let   $\chi_1 \in C_c^\infty(V)$ be chosen such that $\chi_1 = 1$ on the support of $\chi$ and on the support of $\phi$.  
For $ f \in L^2(V \cap \Omega)$ and $g \in H^{1/2} (U)$ we have 
\begin{align*}
	& \scal{g}{(\chi  L_\omega  \phi)^* f } \\
	&= \scal{\chi_1  L_\omega  \phi g}{(  A(x,\nabla_x) - \overline{\omega}^2) 
 		R_\varnothing (-\overline{\omega})  \chi  f} \\ 
	&= \scal{\gamma_0 \chi_1 L_\omega \phi  g}{\gamma_1  R_\varnothing (-\overline{\omega})  \chi f} + 
	\scal{(A(x,\nabla_x) - \omega^2) \chi_1  L_\omega \phi g}{  R_\varnothing (-\overline{\omega}) \chi f} \\
	&= \scal{\phi g}{\gamma_1 R_\varnothing (-\overline{\omega}) \chi f} 
		+ \scal{S_1 g}{\gamma_1 R_\varnothing (-\overline{\omega}) \chi f}   
 	 +  \scal{S_2 g}{ R_\varnothing (-\overline{\omega}) \chi f} \\
 	 &\quad + \scal{[A(x,\nabla_x) ,\chi]  L_\omega \phi g}{  R_\varnothing (-\overline{\omega}) f} 	
\end{align*}
where $S_1$ and $S_2$ are a smoothing operators. Due to the support assumptions we easily see that  
$[A(x,\nabla_x) ,\chi_1 ]  L_\omega \phi$  is smoothing. Thus, we have 
\begin{align*}
	&\chi ( L_\omega -  K_\omega ) \phi : H^{1/2}(U) \to C^\infty(V \cap \overline{\Omega} ) 
		, \\ 
	&(\chi ( L_\omega -  K_\omega ) \phi)^*  : L_2(V \cap \overline{\Omega})  \to C^\infty(U) . 
\end{align*}
Using a similar approach as in \cite[Theorem 2.4.87]{Schulze} we obtain that there exists a function $c_\omega \in C_c ^\infty( (V  \cap \overline{\Omega} )\times U)$ such that 
$$
	(\chi  ( L_\omega -  K_\omega ) \phi g)(x) = \int_{U} c_\omega (x,x') g(x') \; \dd x' .
$$
Thus, $\chi  ( L_\omega -  K_\omega ) \phi $ is smoothing, which implies the assertion.
\subsection{Proof of Lemma  \ref{lemma:symbol_mapping_bounded}}
We only prove the second assertion of the lemma, the first assertion follows in the same way. The proof follows the ideas of 
\cite[Theorem 18.1.11']{HoermanderIII}. Let $g \in C_c^\infty(\kappa(U))$ and let us denote by $Fg$ its Fourier transform. We put 
$\langle \theta \rangle= (1 + |\theta|^2)^{1/2}$. Then the Fourier transform of $\psi p(t,D) \phi g$ is given by 
$$ \eta \mapsto \int_{\RR^{d-1}}  \frac{\hat  q(\eta - \theta, \theta)}{\langle \theta \rangle^{1/2}}  \langle \theta \rangle^{1/2} F ( \phi g ) ( \theta)  \; \dd \theta ,
$$
where
$$ \hat q ( \eta - \theta, \theta ) = \frac{1}{(2\pi)^{(d-1)/2}} \int_{\RR^{d-1}} e^{-i (\eta - \theta) t} \psi(t) q(t, \theta)  \; 
\dd \eta . $$
For all  $n \in \NN$ we have  $|\hat q ( \eta - \theta, \theta ) | \le c_n \langle \theta \rangle \langle \eta-  \theta \rangle^{-n}$, where the constant $c_n$ may be expressed in terms of seminorms of the symbol $q \in S^1_{\mathrm{hom}}(V \times \RR^{d-1})$. 
Using the Schur-test we will show that there exist functions $\alpha, \beta$ and constants $C_1,C_2> 0$ such that 
\begin{align*}
	\int_{\RR^{d-1}} \frac{|\hat q (\eta - \theta, \theta) |}{  \langle \eta \rangle^{1/2} \langle \theta \rangle^{1/2} }  \alpha(\eta) \; \dd \eta & \le C_1 \beta(\theta)  , \\[5pt]
 	\int_{\RR^{d-1}} \frac{|\hat q (\eta - \theta, \theta) |}{  \langle \eta \rangle^{1/2} \langle \theta \rangle^{1/2} }
	 \beta(\theta) \; \dd \theta & \le C_2 \alpha(\eta)  .
\end{align*}
Then the assertion follows. 
Choosing $\alpha(\eta): = \langle \eta \rangle^{1/2}$ and $\beta(\theta) : = \langle \theta \rangle^{1/2}$ we obtain 
\begin{align*}
	\int_{\RR^{d-1}}  \frac{|\hat q (\eta - \theta, \theta) |}{  \langle \eta \rangle^{1/2} \langle \theta \rangle^{1/2} }  \alpha(\eta) \; \dd \eta &\le c_n  \langle \theta \rangle^{1/2} \int_{\RR^{d-1}} 
		\langle \eta-  \theta \rangle^{-n} \; \dd \eta = C_1  \langle \theta \rangle^{1/2} 
\end{align*}
for $n$ sufficiently large.  Moreover, we have 
\begin{align*}
	\int_{\RR^{d-1}}  \frac{|\hat q(\eta - \theta, \theta) |}{  \langle \eta \rangle^{1/2} \langle \theta \rangle^{1/2} }
	 \beta(\theta) \; \dd \theta 
	& \le \frac{c_n}{\langle \eta \rangle^{1/2}} \int_{\RR^{d-1}}  \langle \theta \rangle \langle \eta - \theta \rangle^{-n} 
		\; \dd \theta \\
	&= \frac{c_n}{\langle \eta \rangle^{1/2}} \int_{\RR^{d-1}}  \langle \eta - \theta \rangle \langle \theta \rangle^{-n}  \; \dd \theta . 
\end{align*}
Now Peetre's inequality implies that $\langle \eta - \theta \rangle \le \sqrt{2} \langle \theta \rangle \langle \eta  \rangle$, and the assertion follows. 

\end{document}